\numberwithin{equation}{section}
\newtheorem{thm}{Theorem}[section]
\newtheorem{theorem}[thm]{Theorem}
\newtheorem{lemma}[thm]{Lemma}
\newtheorem{corollary}[thm]{Corollary}
\newtheorem{prop}[thm]{Proposition}
\theoremstyle{definition}
\newtheorem{definition}[thm]{Definition}
\theoremstyle{remark}
\newtheorem{remark}[thm]{\bf{Remark}}
\newcommand\aint{-\hspace{-0.38cm}\int}
\newcommand\bE{\mathbb{E}}
\newcommand\bH{\mathbb{H}}
\newcommand\bL{\mathbb{L}}
\newcommand\bM{\mathbb{M}}
\newcommand\bN{\mathbb{N}}
\newcommand\bP{\mathbb{P}}
\newcommand\bR{\mathbb{R}}
\newcommand\bZ{\mathbb{Z}}
\newcommand\cD{\mathcal{D}}
\newcommand\cF{\mathcal{F}}
\newcommand\cG{\mathcal{G}}
\newcommand\cH{\mathcal{H}}
\newcommand\cT{\mathcal{T}}
\newcommand\frH{\mathfrak{H}}
\newcommand{\mysection}[1]{\section{#1}}
\begin{document}

\title[PDEs with fractional Laplacian on $C^{1,1}$ open sets]{Sobolev regularity theory for the non-local elliptic and parabolic equations on $C^{1,1}$ open sets}

\author{Jae-Hwan Choi$^{1,2}$, Kyeong-Hun Kim$^1$, and Junhee Ryu$^{*1}$}

\email{jaehwanchoi@kaist.ac.kr}

\email{kyeonghun@korea.ac.kr}
\thanks{}

\email{junhryu@korea.ac.kr}

\address{$^1$Department of Mathematics, Korea University, 145 Anam-ro, Seongbuk-gu, Seoul, 02841, Republic of Korea}

\address{$^2$(Current address) Department of Mathematical Sciences, KAIST, 291, Daehak-ro, Yuseong-gu, Daejeon 34141, Republic of Korea}

\thanks{The authors were supported by the National Research Foundation of Korea(NRF) grant funded by the Korea government(MSIT) (No. NRF-2019R1A5A1028324)}

\thanks{$^*$Corresponding author: Junhee Ryu}

\subjclass[2020]{35S16, 35B65, 46E35, 45K05}

\keywords{Non-local elliptic and parabolic equations,  Fractional Laplacian, Dirichlet problem, Sobolev regularity theory, H\"older estimates}

\begin{abstract}
We study the zero exterior problem for the elliptic equation
$$
\Delta^{\alpha/2}u-\lambda u=f, \quad  x\in D\,; \quad u|_{D^c}=0
$$
as well as for the parabolic equation
$$
u_t=\Delta^{\alpha/2}u+f, \quad t>0,\, x\in D \,; \quad u(0,\cdot)|_D=u_0, \,u|_{[0,T]\times D^c}=0.
$$
Here, $\alpha\in (0,2)$, $\lambda \geq 0$ and $D$ is a $C^{1,1}$ open set.
We prove uniqueness and existence of solutions in weighted Sobolev spaces, and obtain global Sobolev and H\"older estimates of solutions and their arbitrary order derivatives. We measure the Sobolev and H\"older regularities of solutions and their arbitrary derivatives using a system of weights consisting of appropriate powers of the distance to the boundary. 
The range of admissible powers of the distance to the boundary is sharp.
\end{abstract}

\maketitle

\mysection{Introduction}

We study the elliptic equation
\begin{equation} \label{elliptic eqn}
\begin{cases}
\Delta^{\alpha/2}u(x)-\lambda u(x)=f(x),\quad &x\in D,
\\
u(x)=0,\quad &x\in D^c,
\end{cases}
\end{equation}
and the parabolic equation
\begin{equation} \label{parabolic eqn}
\begin{cases}
\partial_t u(t,x)=\Delta^{\alpha/2}u(t,x)+f(t,x),\quad &(t,x)\in(0,T)\times D,
\\
u(0,x)=u_0(x),\quad & x\in D,
\\
u(t,x)=0,\quad &(t,x)\in [0,T]\times D^c,
\end{cases}
\end{equation}
where  $\alpha\in (0,2)$ and $D$ is either a half space or a bounded $C^{1,1}$ open set.  The fractional Laplacian $\Delta^{\alpha/2}u$ is defined as 
$$
\Delta^{\alpha/2} u(x) :=c_d\lim_{\varepsilon \downarrow 0}\int_{|y|>\varepsilon} \frac{u(x+y)-u(x)}{|y|^{d+\alpha}}dy, \qquad (c_d:=\frac{2^{\alpha}\Gamma(\frac{d+\alpha}{2})}{\pi^{d/2}|\Gamma(-\alpha/2)|}).
$$

In the probabilistic point of view, equations \eqref{parabolic eqn} and \eqref{elliptic eqn}  are related to a certain pure-jump process which is forced to assume undefined or killed state  when it leaves the open set $D$. The zero exterior condition describes that the influence of the jump process vanishes or is ignored when the process is outside of $D$. See Section \ref{Main results} for detail. In fact, the equations are ill-posed if only zero-boundary condition is assigned.

In this article we study equations \eqref{elliptic eqn} and \eqref{parabolic eqn} in the weighted Sobolev spaces $H^{\gamma}_{p,\theta}(D)$ and $L_p((0,T); H^{\gamma}_{p,\theta}(D))$, respectively.  Here  $p>1$ and  $\theta, \gamma \in \bR$.  For instance, if $\gamma=0,1,2,\cdots$, then
$$
\|u\|_{H^{\gamma}_{p,\theta}(D)}=\left(\sum_{k=0}^{\gamma} \int_D |\rho^{k} D^{k}u|^p \rho^{\theta-d} dx \right)^{1/p},
$$
where $\rho(x):=dist(x,\partial D)$. In general, we use  a unified way to define the spaces  $H^{\gamma}_{p,\theta}(D)$ for all  $\gamma\in \bR$. The powers of $\rho$ are used to control  the behaviors of functions  near   the boundary.

The main contribution of this article is to present weighted Sobolev regularity of arbitrary nonnegative real order derivatives of solutions.   To be more precise,  for elliptic equation \eqref{elliptic eqn} we prove  for any  $\gamma \geq 0$ and $p>1$,
\begin{equation}
 \label{eqn 4.11.1}
\|u\|_{H_{p,\theta-\alpha p/2}^{\gamma+\alpha}(D)} \leq C \|f\|_{H^{\gamma}_{p,\theta+\alpha p/2}(D)}\end{equation}
provided that $\theta \in (d-1, d-1+p)$.  The admissible range of $\theta$ is sharp (cf. Remark \ref{remark sharp}). We also prove a parabolic version of \eqref{eqn 4.11.1} for parabolic equation   \eqref{parabolic eqn}.  See Theorems \ref{21.10.18.2056} and \ref{21.10.18.2057} for our full Sobolev regularity results of the elliptic and parabolic equations. In particular,  if $\gamma=0$ then   \eqref{eqn 4.11.1} implies
  \begin{equation}
  \label{eqn 9.3.1}
 \int_{D}(|\rho^{-\alpha/2}u|^p+ |\rho^{\alpha/2}\Delta^{\alpha/2}u|^p) \rho^{\theta-d} dx
\leq C  \int_{D} |\rho^{\alpha/2}f|^p  \rho^{\theta-d} dx.
\end{equation}

Note that due to the presence of $\rho^{\alpha/2}$ beside $f$ in \eqref{eqn 9.3.1}, the function $f$ is  allowed to blow up near the boundary of $D$. Indeed, it can behave like $\rho^{-\alpha/2}$ near $\partial D$.

We also  obtain global space-time H\"older estimates of  arbitrary derivatives of solutions (see Corollaries \ref{21.09.22.1621} and \ref{cor ellipitic}).   One advantage of our results is that it gives H\"older estimates of solutions even when the free terms are quite rough. 
For instance, if 
$\alpha-\frac{d}{p} \geq \delta\in(0,1)$, then for the elliptic equation we prove
    \begin{equation}
    \label{ho}
  |\rho^{\frac{\theta}{p}-\frac{\alpha}{2}}u|_{C(D)}+|\rho^{\delta+\frac{\theta}{p}-\frac{\alpha}{2}}u|_{C^{\delta}(D)}\leq C \|\psi^{\alpha/2}f\|_{L_{p,\theta}(D)}. 
  \end{equation}

Now we give a description on the mostly related works below. Our focus lies in the results on domains. Accordingly, regarding  the results on the whole space $\bR^d$, we only refer  e.g. to  \cite{bae2015schauder,bass2009regularity,dong2013schauder,kim2015holder,kuhn2019schauder} for H\"older estimates and \cite{dong2012lp,kim2013parabolic,kim2016lp,kim2019lp,mikulevivcius2017p,mikulevivcius2019cauchy} for $L_p$ estimates. 

First, we describe H\"oder estimates.  As for elliptic equation \eqref{elliptic eqn}, it was proved  in \cite{ros2014dirichlet} that  
\begin{equation*}
    f\in L_{\infty}(D)\quad\implies \quad u\in C^{\alpha/2}(\bR^d),\,\,\rho^{-\alpha/2}u\in C^s(D)
\end{equation*}
for some $s>0$.  Here, $\rho(x):=dist (x, \partial D)$.  Higher order  estimate
$$
|u|^{(-\alpha/2)}_{\beta+\alpha;D}\leq C \left(|u|_{C^{\alpha/2}(\bR^d)}+|f|^{(\alpha/2)}_{\beta;D} \right), \qquad  \beta>0
$$
was also obtained in \cite{ros2014dirichlet}, where $|\cdot|^{(a)}_{b}$ denotes the interior H\"older norm (see e.g. \cite{GT} or \cite{KK2004}).  The result of \cite{ros2014dirichlet} was generalized for  elliptic equations with stable-like operators in \cite{arapostathis2016dirichlet,kim2019boundary,ros2016regularity}.  We also refer to  \cite{kuhn2021interior,ros2016dirichlet} for the local result of the type 
\begin{equation}
\nonumber
\label{21.10.28.20.54}
 f\in C^{\beta}(D) \quad \implies u\in C^{\beta+\alpha}_{loc}(D), \quad \beta>0
\end{equation}
 proved for non-local elliptic equations with singular kernels or general operators.  Also, see \cite{caffarelli2009regularity,kim2021generalized,ros2016boundary} for  related works on non-linear elliptic equations. Now, we discuss the results on  parabolic equation \eqref{parabolic eqn}.   In \cite{fernandez2017regularity}, it was proved that  if $u_0\in L_2(D)$ and $ f\in L_{\infty}((0,T)\times D)$, then
  \begin{equation}
  \label{pa111}
 u\in C_{t,x}^{1-\varepsilon,\alpha/2}((t_0,T)\times D),\,\rho^{-\alpha/2}u\in C_{t,x}^{\frac{1}{2}-\frac{\varepsilon}{\alpha},\frac{\alpha}{2}-\varepsilon}((t_0,T)\times D)
\end{equation}
for any $\varepsilon>0$ and $t_0\in(0,T)$.  Note that this result is local with respect to the time variable. 
For a global estimate, we refer to  \cite {zhang2018dirichlet}, which  in particular proved 
\begin{equation}
\label{pa3}
\sup_{t\leq T} \left(|u|^{(-\theta)}_{\alpha+\gamma;D}+ |\Delta^{\alpha/2}u|^{(\alpha-\theta)}_{\gamma;D}\right) \leq C \left(|u_0|^{(-\theta)}_{\alpha+\gamma;D}+\sup_{t\leq T}|f|^{(\alpha-\theta)}_{\gamma;D}\right)
\end{equation}
for any $\theta\in(0,\alpha/2)$ and $\gamma\in(0,1)$. This estimate does not give H\"older regularity with respect to the time variable. As compared to \eqref{pa111} and \eqref{pa3},  our results give global H\"older regularity with respect to both time and spacial variables.

Next, we describe results in $L_p$ spaces.
The global summability results were studied e.g. in \cite{Abdel Global,leonori2015basic}.
For instance, for elliptic equation \eqref{elliptic eqn}, the inequality
\begin{equation}
\label{eqn 7.11}
\|u\|_{L_{\frac{d p}{d-\alpha p}}(D) }+\|\Delta^{\alpha/4}u\|_{ L_{\frac{d p}{d-\alpha p/2}}(D) } \leq C \|f\|_{L_p (D)}, \quad (1<p<2d/(d+\alpha))
\end{equation}
was proved in \cite{leonori2015basic}.
We remark that \eqref{eqn 7.11} does not cover the full regularity of solution, that is, estimate of  $\Delta^{\alpha/2}u$ is not covered.
There are also several interior regularity results, such as those introduced in \cite{zuazua2017,biccari2018local,cozzi2017interior,nowak2020hs}. 
For instance, the results
$$
f\in L_p(D) \implies \quad u\in H^{\alpha}_{p,loc}(D), \quad (1<p<\infty)
$$
and
\begin{equation}
\label{eqn 4.23.4}
f\in L_{p_*}(D)  \implies\quad u\in H^{\alpha/2}_{p,loc}(D)  \quad \quad (p>2, \, p_*:=\max\{\frac{pd}{d+p\alpha/2}, 2\})
\end{equation}
were proved in  \cite{zuazua2017} and \cite{nowak2020hs} respectively. Note that $p_*<p$ since $p>2$. 
We also refer to \cite{bogdan2020extension,felsinger2015dirichlet,hoh1996dirichlet} for results on Hilbert spaces.  We finally refer to \cite{grubb2014local,grubb2018regularity} for the regularity  results in the $\mu$-transmission spaces $H_p^{\mu(s)}(D)$, proved for the equations  with pseudo-differential operators satisfying the $\mu$-transmission property.
In particular, it is proved that if $f\in L_p(D)$, then elliptic problem \eqref{elliptic eqn} has a unique solution $u\in H_p^{\alpha/2(\alpha)}(D)$.

Our approach is different from those in the above mentioned articles and is based on weighted Sobolev spaces.  We summarize our results and their differences from above mentioned results as follow.

\begin{itemize}
\item{}  We prove the weighted Sobolev regularity result of  solutions to both elliptic and parabolic equations. For instance, for the elliptic equation we prove  \eqref{eqn 4.11.1}, which is 
\begin{equation}
\label{eqn 4.23}
\|u\|_{H_{p,\theta-\alpha p/2}^{\gamma+\alpha}(D)} \leq C \|f\|_{H^{\gamma}_{p,\theta+\alpha p/2}(D)}, 
 \end{equation}
for any $p>1,  \gamma \geq 0$ and  $\theta\in (d-1, d-1+p)$. 

\item{} We emphasize that unlike in any of above mentioned articles,  our results, e.g. \eqref{eqn 4.23},  are proved for any $\gamma\geq 0$, despite that $D$ is an only $C^{1,1}$ open set. This is possible because we use appropriate weighted Sobolev spaces.

\item{} As can be seen from \eqref{eqn 9.3.1}, regularity of $\rho^{-\alpha/2}u$ near $\partial D$ changes according to $\theta$.  For instance, as  $\theta \downarrow d-1$, $\rho^{-\alpha/2}u$   decays faster.  Among the results described above, the closest result to \eqref{eqn 9.3.1}(or \eqref{eqn 4.23})  can be found in \cite{grubb2014local,grubb2018regularity}, which in particular show that  if $D$ is a bounded $C^{\infty}$ domain and $f\in L_p(D)$ then $\rho^{-\alpha/2}u\in H^{\alpha/2}_p(D)$.  This result  is close to \eqref{eqn 9.3.1} if $\theta=d$.  Besides that $\theta$ can vary in the present article, we require the weaker assumption 
$\rho^{\alpha/2}f \in L_p(D, \rho^{\theta-d}dx)$. 

\item{} We use \eqref{eqn 4.23} to prove versions of \eqref{eqn 7.11} and \eqref{eqn 4.23.4} under weaker assumption on $f$. See Remark \ref{23.04.14.21} for a comparison of our results with those in  \cite{Abdel Global,leonori2015basic,nowak2020hs}.

\item{}   We also obtain  global H\"older estimates  for the elliptic and parabolic equations; the free terms can be quite irregular and unbounded. For instance, in \eqref{ho} we only require $\rho^{\alpha/2}f\in L_{p,\theta}(D)$. 

\end{itemize}

Now, we introduce the organization of this article.  In Section \ref{Main results}, we introduce our main results, Sobolev space theory and H\"older estimates of solutions. In Section \ref{zero order}, we study the representation of solutions and estimate the zero-th order derivative of solutions.  In Section \ref{21.09.20.1430}, we prove higher regularity of solutions, and we give the proofs of main results  in Section \ref{subproof}.

We finish the introduction with notations used in this article. We use $``:="$ or $``=:"$ to denote a definition.  $\bN$ and $\bZ$ denote the natural number system and the integer number system, respectively.  We denote $\bN_+:=\bN\cup\{0\}$, and  as usual $\bR^d$ stands for the Euclidean space of points $x=(x^1,\dots,x^d)$,
$$
B_r(x)=\{y\in\bR^d : |x-y|<r\}, \quad \bR^{d}_+=\{(x^1,\dots,x^d)\in\bR : x^1>0\}.
$$
For nonnegative functions $f$ and $g$, we write $f(x)\approx g(x)$  if there exists a constant $C>0$, independent of $x$,  such that $C^{-1} f(x)\leq g(x) \leq C f(x)$. For  multi-indices $\beta=(\beta_1,\cdots,\beta_d)$, $\beta_i\in\bN_+$, and functions $u(x)$ depending on $x$,
$$
 D_iu(x):=\frac{\partial u}{\partial x^i},\quad  D^{\beta}_xu(x):=D^{\beta_d}_{d}\cdots D_1^{\beta_1}u(x).
$$
We also use  $D^n_xu$ to denote   the  partial derivatives of order $n\in\bN_+$ with respect to the space variables. For an open set $U\subset \bR^d$, $C(U)$ denotes the space of continuous functions $u$ in $U$ such that $|u|_{C(U)}:=\sup_U |u(x)|<\infty$. $C_0(U)$ is the set of functions in  $C(U)$  satisfying $\underset{|x|\to\infty}{\lim} u(x)=0$ and $\underset{x\to \partial U}{\lim} u(x)=0$.  By $C^2_b(U)$ we denote the space of functions  whose derivatives of order up  to $2$ are in $C(U)$.   For an open set $V\subset \bR^m$, where $m\in \bN$,   by $C_c^\infty(V)$ we denote the space of infinitely differentiable functions with compact support in $V$. For  a Banach space $F$ and $\delta\in (0,1]$,   $C^{\delta}(V;F)$ denotes the space of $F$-valued continuous functions $u$ on $V$  such that
\begin{eqnarray*}
    |u|_{C^{\delta}(V;F)}&:=&|u|_{C(V;F)}+[u]_{C^{\delta}(V;F)}
    \\
    &:=&
     \sup_{x\in V}|u(x)|_F+\sup_{x,y\in V}\frac{|u(x)-u(y)|_F}{|x-y|^{\delta}}<\infty.
\end{eqnarray*}
Also, for  $p>1$ and a measure $\mu$ on $V$, $L_p(V, \mu; F)$  denotes the set of $F$-valued Lebesgue measurable functions $u$ such that 
$$
\|u\|_{L_p(V, \mu; F)}:=\left(\int_V |u|^p_F \,d\mu\right)^{1/p}<\infty.
$$
 We drop $F$ and $\mu$ if $F=\bR$ and $\mu$ is the Lebesgue  measure.  By  $\cD(U)$, where $U$ is an open set in $\bR^d$, 
we denote the space of all distributions on $U$, and  for given $f\in \cD(U)$, the action of $f$ on $\phi \in C_c^\infty(U)$ is denoted by
$$
( f, \phi)_{U} :=f(\phi).
$$
Finally, if  we write $C=C(a,b,\cdots)$, then this means that the constant $C$ depends only on $a,b,\cdots$.

\mysection{Main results} \label{Main results}

In subsection 2.1 we prove the uniqueness and existence results in a wide class of function spaces, and  we give the regularity of solutions  in subsection 2.2. 

Throughout this article,  $D$ is either a half space $\bR^d_+$ or a bounded $C^{1,1}$ open set.

\subsection{Uniqueness and existence} \label{notion of weak}

For   suitable functions $f$ defined on $\bR^d$ (e.g. $f\in C^2_b(\bR^d)$),  we define the fractional Laplacian $\Delta^{\alpha/2}f$ as
\begin{align} \label{21.07.20.1710}
\Delta^{\alpha/2} f(x) :=c_d \lim_{\varepsilon \downarrow 0}\int_{|y|>\varepsilon} \frac{f(x+y)-f(x)}{|y|^{d+\alpha}}dy,
\end{align}
where $c_d=\frac{2^{\alpha}\Gamma(\frac{d+\alpha}{2})}{\pi^{d/2}|\Gamma(-\alpha/2)|}$.   Also, for functions $f,g$ defined on  $E\subset \bR^d$, we set
$$
\langle f,g \rangle_E:=\int_E fg \,dx.
$$

\begin{definition} \label{21.06.18.1431}

$(i)$ (Parabolic problem)  For given $f \in L_{1,loc}([0,T]\times D)$ and $u_0\in L_{1,loc}(D)$,
we say that $u$ is a  (weak) solution to the problem
\begin{equation} \label{21.06.18.1430}
\begin{cases}
\partial_t u(t,x)=\Delta^{\alpha/2}u(t,x)+f(t,x),\quad &(t,x)\in(0,T)\times D,
\\
u(0,x)=u_0(x),\quad & x\in D,
\\
u(t,x)=0,\quad &(t,x)\in [0,T]\times D^c,
\end{cases}
\end{equation}
if (a) $u=0 \ a.e.$ in $[0,T]\times D^c$, (b) $\langle u(t,\cdot), \phi \rangle_{\bR^d}$ and $ \langle u(t,\cdot), \Delta^{\alpha/2}\phi \rangle_{\bR^d}$ exist   for any $t\leq T$ and test function $\phi\in C_c^\infty(D)$, and (c)  for any  $\phi\in C^{\infty}_c(D)$ the equality
\begin{align} \label{21.09.20.1816}
\langle u(t,\cdot),\phi \rangle_{\bR^d}= \langle u_0,\phi\rangle_D + \int_0^t \langle u(s,\cdot),\Delta^{\alpha/2}\phi \rangle_{\bR^d} ds + \int_0^t \langle f(s,\cdot),\phi \rangle_D ds
\end{align}
holds for all $t\leq T$.  

$(ii)$ (Elliptic problem) Let $\lambda\in[0,\infty)$. For given $f \in L_{1,loc}(D)$,
we say that $u$ is a (weak) solution to
\begin{equation} \label{21.07.13.13.02}
\begin{cases}
\Delta^{\alpha/2}u(x)-\lambda u(x)=f(x),\quad &x\in D,
\\
u(x)=0,\quad &x\in D^c,
\end{cases}
\end{equation}
if  (a) $u=0 \ a.e.$ in $D^c$, (b) $\langle u, \phi\rangle_{\bR^d}$ and $ \langle u, \Delta^{\alpha/2}\phi \rangle_{\bR^d}$ exist   for any  test function $\phi\in C_c^\infty(D)$, and (c)  for any  $\phi\in C^{\infty}_c(D)$  we have
\begin{equation}
\label{eqn para}
\langle u,\Delta^{\alpha/2}\phi \rangle_{\bR^d} - \lambda  \langle u,\phi \rangle_{\bR^d} = \langle f,\phi \rangle_D.
\end{equation}
\end{definition}

It is  clear if $u(t,x)$ is a strong (or point-wise) solution to \eqref{21.06.18.1430} and sufficiently regular,  then $u$ becomes a weak solution in the sense of Definition \ref{21.06.18.1431}.

For an explicit representation of weak solutions, we introduce  some related stochastic processes.  Let $X=(X)_{t\geq 0}$ be a rotationally symmetric $\alpha$-stable $d$-dimensional L\'evy process  defined on a probability space $(\Omega,\cF,\bP)$, that is, $X_t$ is a L\'evy process such that
$$
\bE e^{i \xi \cdot X_t}=e^{-|\xi|^{\alpha}t}, \quad \forall\, \xi\in \bR^d.
$$
Let
$$\tau_{D}=\tau^x_D:=\inf\{t\geq0: x+X_t\not\in D\}
$$
denote  the first exit time of $D$ by $X$.  We add an element, called a cemetery point, $\partial\notin \bR^d$ to $\bR^d$, and define the killed process of $X$ upon $D$ by 
\begin{align*}
X_t^{D}=X_t^{D,x}:=
\begin{cases}
x+X_t\quad &t<\tau^x_{D},
\\
\partial\quad &t\geq \tau^x_{D}.
\end{cases}
\end{align*}
The cemetery point $\partial$ is introduced to define $f(\partial):=0$ for any function $f$  so that $f(X^D_t)=0$ if $t\geq \tau_D^x$.  Let $p^D (t,x,y)$ denote the transition density of $X^{D}$, i.e., for any Borel set $B\subset \bR^d$,
 $$
 \bP(X^{D,x}_t \in B)= \int_B p^D (t,x,y) dy.
 $$
 
Recall $\rho(x)=dist (x, \partial D)$. We denote $L_{p,\theta}(D):=L_p(D, \rho^{\theta-d}dx)$  for any $\theta\in \bR$ and $p>1$. In other words, $L_{p,\theta}(D)$ is the set of functions $u$ such that 
$$
\|u\|_{L_{p,\theta}(D)} := \left(\int_D |u|^p \rho ^{\theta-d} dx\right)^{1/p} <\infty.
$$
For $T<\infty$, we also define the space
$$
\bL_{p,\theta}(D,T):=L_{p}((0,T);L_{p,\theta}(D))
$$
given with the norm
$$
\|u\|_{\bL_{p,\theta}(D,T)}=\left( \int^T_0 \int_D |u|^p \rho^{\theta-d}dx dt \right)^{1/p}.
$$

Here are our uniqueness and existence results of the elliptic and parabolic equations.  The proofs are given in Section \ref{subproof}.

\begin{theorem}[Parabolic case]
\label{21.07.14.13.00}
Let $\alpha\in(0,2)$ and $p\in(1,\infty)$.   Assume  $\theta\in(d-1,d-1+p)$,
$f \in \bL_{p,\theta+\alpha p/2}(D,T)$ and $u_0\in  L_{p,\theta-\alpha p/2 + \alpha}(D)$.

\begin{enumerate}[(i)]

\item The function
\begin{align} \label{rep parabolic}
u(t,x):=\int_D p^D (t,x,y) u_0(y)dy + \int_0^t \int_D p^D (t-s,x,y) f(s,y)dyds
\end{align}
belongs to $\bL_{p,\theta-\alpha p/2}(D,T)\cap \{u=0\,\text{on}\,\,[0,T]\times D^c\}$  and is the unique weak solution to \eqref{21.06.18.1430} in this function  space.

\item For the solution $u$, we have
\begin{equation}
\label{para}
 \|u\|_{\bL_{p,\theta-\alpha p/2}(D,T)} \leq C(\|f\|_{\bL_{p,\theta+\alpha p/2}(D,T)}
 +\|u_0\|_{L_{p,\theta-\alpha p/2 + \alpha}(D)}),
\end{equation} where $C$ is independent of $u$ and $T$.
\end{enumerate}
\end{theorem}

\begin{theorem}[Elliptic case]
\label{21.07.27.12.13}
Let $\alpha\in(0,2)$ and $p\in(1,\infty)$. Assume  $\theta\in(d-1,d-1+p)$ and  $f \in L_{p,\theta+\alpha p/2}(D)$.
\begin{enumerate}[(i)]
    \item Let $\lambda>0$ or $D$ be bounded. Then, the function 
$$
u(x)=u^{(\lambda)}(x):=\int_D \left(\int^{\infty}_0 e^{-\lambda t}p^D(t,x,y) dt \right)f(y)dy
$$
belongs to  $L_{p,\theta-\alpha p/2}(D)\cap \{u=0\,\text{on}\,\,D^c\}$ and is the unique weak solution to \eqref{21.07.13.13.02} in this function space.

\item  Let $\lambda=0$ and $D=\bR^d_+$. Then,  $u^{(1/n)}$ converges weakly in $L_p(\bR^d, \rho^{\theta-d-\alpha p/2}dx)$,  and the weak limit $u$ is the unique solution to equation \eqref{21.07.13.13.02} in the function space 
 $L_{p,\theta-\alpha p/2}(D)\cap \{u=0\,\text{on}\,\,D^c\}$.
 
 \item For the solution $u$, we have
 \begin{equation*}
 \|u\|_{L_{p,\theta-\alpha p/2}(D)}\leq C \|f\|_{L_{p,\theta+\alpha p/2}(D)},
 \end{equation*}
 where  $C$ is independent of $u$ and $\lambda$.
  \end{enumerate}
\end{theorem}

\begin{remark}
By definition of the norm in $\bL_{p,\theta-\alpha p/2}(D,T)$ and \eqref{para},
$$
\|u\|^p_{\bL_{p,\theta-\alpha p/2}(D,T)}=\int^T_0 \int_D |\rho^{-\alpha/2}u|^p \rho^{\theta-d}dxdt<\infty
$$
provided that $-1<\theta-d<-1+p$. This suggests that $u$  vanishes at a certain rate near the boundary of $D$. The detailed behaviors of solutions and their  derivatives   will be handled in the following subsection.
\end{remark}

\begin{remark}
\label{remark sharp}
The range $\theta\in (d-1, d-1+p)$ in Theorems \ref{21.07.14.13.00} and \ref{21.07.27.12.13} is sharp. We demonstrate this with a simple example for the elliptic problem. The parabolic problem can be handled similarly.

 Let $D=B_1(0)$ and $f$ be a (non-zero)   nonnegative function in $C_c^{\infty}(D)$ so that $f\in L_{p,\theta}(D)$ for any $\theta\in\bR$.

1. First, we show $\theta>d-1$ is necessary.  Denote 
$$
G^0_D(x,y):=\int^{\infty}_0 p^D(t,x,y)dt \quad \text{and}\quad  u(x):=\int_{D}G_{D}^{0}(x,y)f(y)dy.
$$
Due to  \cite[Corollary 1.2]{chen2010heat}, if $y\in supp(f)$ and $(r+1)/2<|x|<1$ where $r:=1-dist(supp(f),\partial D)>0$, then $G_{D}^0(x,y)\approx \rho(x)^{\alpha/2}$. Hence, for $(r+1)/2<|x|<1$,
$$
u(x)\approx \rho(x)^{\alpha/2}=(1-|x|)^{\alpha/2},
$$
and consequently 
\begin{align*}
    \| u \|_{L_{p,\theta-\alpha p/2}(D)}^p  \geq C \int_{(r+1)/2}^1(1-s)^{\theta-d}s^{d-1}ds.
\end{align*}
The  right-hand side above is finite only if $\theta-d>-1$. Therefore,  the condition $\theta-d>-1$ is needed to have  $u\in  L_{p,\theta-\alpha p/2}(D)$.

2. Next, we show $\theta<d-1+p$ is also necessary.  Suppose  Theorem \ref{21.07.27.12.13} holds for some  $\theta\geq d-1+p$.  Then, 
\begin{equation*}
\left\|\int_DG_D^0(\cdot,y)g(y)dy\right\|_{L_{p,\theta-\alpha p/2}(D)}\leq C\|g\|_{L_{p,\theta+\alpha p/2}},\quad \forall g\in L_{p,\theta+\alpha p/2}(D).    
\end{equation*}
Since $G_{D}^0(x,y)=G_D^0(y,x)$ (see e.g. \cite[Theorem 2.4]{chung2012brownian}), by H\"older's inequality,
\begin{align*}
    \left|\int_{D}u(x)g(x)dx\right|&=\left| \int_D \left(\int_{D}G_D^0(x,y)g(x)dx\right)f(y)dy\right|\\
    &\leq  \left\|\int_DG_D^0(\cdot,y)g(y)dy\right\|_{L_{p,\theta-\alpha p/2}(D)}\|f\|_{L_{p',\theta'+\alpha p'/2}(D)}
    \\
    &\leq C\|g\|_{L_{p,\theta+\alpha p/2}(D)}\|f\|_{L_{p',\theta'+\alpha p'/2}(D)},
\end{align*}
where $1/p+1/p'=1$ and $\theta/p+\theta'/p'=d$. Since $L_{p',\theta'-\alpha p'/2}(D)$ is the dual space of $L_{p,\theta+\alpha p/2}(D)$ (cf. Lemma \ref{21.06.15.13.47}$(iii)$), this leads to
\begin{equation}
\label{imp}
\|u\|_{L_{p',\theta'-\alpha p'/2}(D)} \leq C\|f\|_{L_{p',\theta'+\alpha p'/2}(D)}<\infty.
\end{equation}
Note that $\theta'\leq d-1$. As shown above, \eqref{imp} is not possible, and therefore we get a contradiction.
\end{remark}

\begin{remark}
Since   $\Delta^{\alpha/2}\phi$ belongs to the dual space of  $L_{p,\theta-\alpha p/2}(D)$ for any $\phi\in C^{\infty}_c(D)$ (cf. Lemma \ref{cor 10.10}) and $C^{\infty}_c(D)$ is dense in the dual space,  we can replace $\langle \cdot, \cdot \rangle_D$ and $\langle \cdot, \cdot \rangle_{\bR^d}$ in \eqref{21.09.20.1816} and \eqref{eqn para} by $(\cdot,\cdot)_D$ for the solutions in Theorems \ref {21.07.14.13.00} and \ref{21.07.27.12.13}.
\end{remark}

\subsection{Regularity of solutions} \label{Regularity of solution}
In this subsection, we present Sobolev regularity of solutions. We also obtain  H\"older estimates of solutions based on a Sobolev embedding theorem.  In particular, we give asymptotic behaviors of solutions and their `arbitrary' order derivatives near the boundary of $D$.

To describe such results,  we first  recall Sobolev and Besov spaces on $\bR^d$.
For $p\in(1,\infty)$ and $\gamma\in\bR$,  the Sobolev space $H_p^\gamma=H_p^{\gamma}(\bR^d)$ is defined as the space of all tempered distributions $f$ on $\bR^d$ satisfying
$$
\|f\|_{H_p^{\gamma}}:=\|(1-\Delta)^{\gamma/2}f\|_{L_p}<\infty,
$$
where
$$
(1-\Delta)^{\gamma/2} f(x) := \cF^{-1} \left[(1+|\cdot|^2)^{\gamma/2}\cF [f] \right](x).
$$
Here, $\cF$ and $\cF^{-1}$ denote the $d$-dimensional Fourier transform and the inverse Fourier transform respectively, i.e.,
$$
\cF[f](\xi):=\int_{\bR^d} e^{-i\xi\cdot x} f(x) dx, \quad  \cF^{-1}[f](x):=\frac{1}{(2\pi)^d}\int_{\bR^d} e^{i\xi\cdot x} f(\xi) d\xi.
$$
As is well known, if  $\gamma\in \bN_+$, then  we have 
$$H^{\gamma}_p=W^{\gamma}_p:=\{f: D^{\beta}_x u\in L_p(\bR^d), |\beta|\leq \gamma\}.
$$
For $T\in(0,\infty)$, define 
$$\bH_p^{\gamma}(T):=L_p((0,T);H_p^{\gamma}),\quad \bL_p(T):=\bH_p^0(T)=L_p((0,T);L_p).
$$
Now we take a function $\Psi$ whose Fourier transform $ \cF[\Psi]$ is infinitely differentiable, supported in an annulus $\{\xi\in\bR^d : \frac{1}{2} \leq |\xi| \leq 2\}$, $\cF[\Psi]\geq0$ and
$$
\sum_{j\in \bZ} \cF[\Psi](2^{-j}\xi)=1, \qquad \forall \xi\neq0.
$$
For a tempered distribution $f$ and $j\in \bZ$, define
$$
\Delta_j f(x):=\cF^{-1}\left[\cF[\Psi](2^{-j}\cdot)\cF [f]\right](x), \qquad 
S_0 f(x):= \sum_{ j=-\infty}^0 \Delta_j f(x).
$$
 The Besov space $B_p^\gamma=B_p^\gamma(\bR^d)$, where $p>1, \gamma\in \bR$,  is defined as the space of all tempered distributions $f$ satisfying
$$
\|f\|_{B_p^\gamma}:=\| S_0 f\|_{L_p} + \left(\sum_{j=1}^\infty 2^{\gamma p j} \| \Delta_j f \|_{L_p}^p \right)^{1/p} < \infty.
$$
It is well known (see e.g. \cite[Remark 2.5.12/2]{triebel2010theory}) that if $\gamma=n+\delta$, where $n\in \bN_+$ and $\delta\in (0,1)$, then
\begin{equation}
\label{21.09.13.14.40.1}
    \|f\|_{B_p^{\gamma}}\approx \|f\|_{H^n_p}
    + \left(\sum_{|\beta|=n}\int_{\bR^d} \int_{\bR^d} 
    \frac{|D^{\beta}_xf(x+y)-D^{\beta}_xf(x)|^p} { |y|^{d+\delta p} }
     dydx \right)^{1/p}.
\end{equation}
Moreover, for any $p>1$, we have 
\begin{equation}
\label{besov}
H^{\gamma_2}_p\subset B^{\gamma_1}_p \quad \text{if}\quad \gamma_1<\gamma_2.
\end{equation}

Next, we introduce weighted Sobolev and Besov spaces on $D\subset\bR^d$. Recall $\rho(x)=dist\,(x,\partial D)$ and $L_{p,\theta}(D):=L_p(D, \rho^{\theta-d}dx)$. For  any $\theta\in \bR$ and $n\in \bN_+$, define 
$$
H^{n}_{p,\theta}(D)=\{u: u, \rho D_x u, \cdots, \rho ^n D^n_x u\in L_{p,\theta}(D)\}.
$$
The norm in this space is defined as
\begin{align} \label{21.10.06.0918}
\|u\|_{H_{p,\theta}^{n}(D)}=\sum_{|\beta|\leq n} \left( \int_{D}|\rho ^{|\beta|}D_x^{\beta}u(x)|^p \rho^{\theta-d}dx\right)^{1/p}.
\end{align}
To generalize this space and define $H^{\gamma}_{p,\theta}(D)$ for any $\gamma\in \bR$, we proceed as follows. 
We choose a sequence of nonnegative functions $\zeta_n \in C^{\infty} (D), n\in \bZ$,  having the following properties:
\begin{align}
    &\label{21.10.03.18.06.1}(i)\,\,supp (\zeta_n) \subset \{x\in D : k_1e^{-n}< \rho(x)<k_2e^{-n}\}, \quad k_2>k_1>0,
    \\
    &\label{21.10.03.18.06.2}(ii)\,\,\sup_{x\in\bR^d}|D^m_x \zeta_n (x)| \leq C(m)e^{mn},\quad \forall m\in\bN_+
    \\
    &\label{21.10.03.18.06.3}(iii)\,\,\sum_{n\in\bZ} \zeta_n(x) > c>0,\quad\forall x\in D.
\end{align}
Such functions can be easily constructed by considering  mollifications of indicator functions of the sets of the type $\{x\in D: k_3e^{-n}< \rho(x) <k_4e^{-n}\}$. If the set $\{x\in D: k_1e^{-n}< \rho(x) <k_2e^{-n}\}$ is empty, we just take $\zeta_n=0$.

Now we define weighted Sobolev spaces $H^{\gamma}_{p,\theta}(D)$ and weighed Besov spaces $B^{\gamma}_{p,\theta}(D)$ for any $\gamma, \theta\in \bR$ and $p>1$.  To understand these spaces, one needs to notice that for any distribution $u$ on $D$, $\zeta_{-n}u$ becomes a distribution on $\bR^d$. Obviously, the action  of $\zeta_{-n}u$ on $C^{\infty}_c(\bR^d)$ is defined as 
\begin{equation}
\label{distribution}
(\zeta_{-n}u, \phi)_{\bR^d}=(u, \zeta_{-n}\phi)_D, \quad \phi \in C^{\infty}_c(\bR^d).
\end{equation}

By $H_{p,\theta}^{\gamma}(D)$ and $B_{p,\theta}^{\gamma}(D)$  we denote the sets of  distributions $u$ on $D$ such that
\begin{equation}
\label{21.10.06.15.13}
\|u\|_{H_{p,\theta}^{\gamma}(D)}^p:=\sum_{n\in\bZ}e^{n\theta}\|\zeta_{-n}(e^n\cdot)u(e^n\cdot)\|_{H_p^{\gamma}}^p<\infty,
\end{equation}
and
\begin{equation*}
\|u\|_{B_{p,\theta}^{\gamma}(D)}^p:=\sum_{n\in\bZ}e^{n\theta}\|\zeta_{-n}(e^n\cdot)u(e^n\cdot)\|_{B_{p}^{\gamma}}^p<\infty,
\end{equation*}
respectively.
  The spaces $H_{p,\theta}^{\gamma}(D)$ and $B_{p,\theta}^{\gamma}(D)$ are independent of choice of $\{\zeta_n\}$ (see e.g. \cite[Proposition 2.2]{lototsky2000sobolev}). More precisely, if $\{\xi_n\in C^{\infty}(D):n\in \bZ\}$ satisfies \eqref{21.10.03.18.06.1} and \eqref{21.10.03.18.06.2}, then
\begin{equation}
\label{equiv norm1}
\sum_{n\in\bZ}e^{n\theta}\|\xi_{-n}(e^n\cdot)u(e^n\cdot)\|_{H_p^{\gamma}}^p\leq C\|u\|_{H_{p,\theta}^{\gamma}(D)}^p,
\end{equation}
and the reverse inequality of \eqref{equiv norm1} also holds if $\{\xi_n\}$ satisfies \eqref{21.10.03.18.06.3}. The similar statements hold in the space $B^{\gamma}_{p,\theta}(D)$ as well.  Furthermore, if $\gamma=n \in \bN_+$,  then  the norms defined in \eqref{21.10.06.0918}  and \eqref{21.10.06.15.13} are equivalent  (cf. \cite[Proposition 2.2]{lototsky2000sobolev}).

Obviously, by \eqref{besov}, we have for any $p>1$ and $\theta\in \bR$,
\begin{equation}
\label{besov weight}
H^{\gamma_2}_{p,\theta}(D) \subset B^{\gamma_1}_{p,\theta}(D) \quad \text{if}\quad \gamma_1<\gamma_2.
\end{equation}
Furthermore, for an equivalent norm in  $B^{\gamma}_{p,\theta}(D)$, we can apply  \eqref{21.09.13.14.40.1} and prove the following: if $\gamma=n+\delta>0$, where $n\in \bN_+$, $\delta\in (0,1)$, and $\theta-d+\gamma p >-1$, then 
\begin{equation}
 \label{eqn rel}
\|u\|_{B^{\gamma}_{p,\theta}(D)} \approx  \|u\|_{H^n_{p,\theta}(D)} + \left( \sum_{|\beta|=n} \int_D\int_{D} \rho_{x,y}^{\theta-d+\gamma p} 
\frac{|D^{\beta}u(x)-D^{\beta}u(y)|^p}{|x-y|^{d+\delta p} }  dydx \right)^{1/p},
\end{equation}
where $\rho_{x,y}=\rho(x) \wedge \rho(y)$. The proof of \eqref{eqn rel} is left to  the reader.  Relation \eqref{eqn rel} will not be used elsewhere in this article.

Next, we choose (cf. \cite{KK2004}) an infinitely differentiable  function $\psi$ in $D$  such that $\psi\approx \rho$ on $D$, and for any $m\in \bN_+$
$$
\sup_D |\rho^m(x) D^{m+1}_x \psi(x)|\leq C(m)<\infty.
$$
 For instance, one can  take $\psi(x):=\sum_{n\in\bZ} e^{-n} \zeta_n(x)$.

Below we  collect some other properties of the spaces $H^{\gamma}_{p,\theta}(D)$ and $B_{p,\theta}^\gamma(D)$.  For $\nu\in \bR$, we write $u\in \psi^{-\nu} H_{p,\theta}^{\gamma}(D)$ (resp. $u\in \psi^{-\nu} B_{p,\theta}^{\gamma}(D)$) if $\psi^{\nu}u \in  H_{p,\theta}^{\gamma}(D)$ (resp. $\psi^{\nu}u \in  B_{p,\theta}^{\gamma}(D)$).

\begin{lemma}\label{21.06.15.13.47}
Let $\gamma,\theta\in\bR$ and $p\in(1,\infty)$.
\begin{enumerate}[(i)]

\item\label{21.06.15.13.47.2}
The space $C^{\infty}_c(D)$ is dense in $H^{\gamma}_{p,\theta}(D)$ and $B^{\gamma}_{p,\theta}(D)$.

\item\label{21.06.15.13.47.5}
For $\delta\in\bR$, $H_{p,\theta}^\gamma (D) = \psi^\delta H_{p,\theta+\delta p}^\gamma(D)$ and $B_{p,\theta}^\gamma (D) = \psi^\delta B_{p,\theta+\delta p}^\gamma(D)$. Moreover,
$$
\|u\|_{H_{p,\theta}^\gamma(D)}\approx \|\psi^{-\delta} u\|_{H_{p,\theta+\delta p}^\gamma(D)}, \quad \|u\|_{B_{p,\theta}^\gamma(D)}\approx \|\psi^{-\delta} u\|_{B_{p,\theta+\delta p}^\gamma(D)}.
$$

\item\label{21.06.15.13.47.9}(Duality)
Let
\begin{equation*} \label{dual}
1/p+1/p'=1, \quad \theta/p+\theta'/p' = d.
\end{equation*}
Then, the dual spaces of $H_{p,\theta}^\gamma(D)$ and $B_{p,\theta}^\gamma(D)$ are $H_{p',\theta'}^{-\gamma}(D)$ and $B_{p',\theta'}^{-\gamma}(D)$, respectively.

\item \label{23.04.14-1}(Sobolev embedding)
Let $\mu\leq\gamma$, $1<p\leq q$ and $\theta\leq\tau$ such that
\begin{equation*} 
  \mu-d/q\leq \gamma-d/p, \quad \tau/q=\theta/p.
\end{equation*}
Then, we have
\begin{equation*}
  \|u\|_{H_{q,\tau}^\mu(D)} \leq C \|u\|_{H_{p,\theta}^\gamma(D)}.
\end{equation*}

\item \label{21.06.15.13.47.8}(Sobolev-H\"older embedding)

Let
$\gamma-\frac{d}{p} \geq n+\delta$ for some $n\in\bN_+$ and $\delta\in(0,1)$. Then, for any $k\leq n$,
\begin{equation*}
    |\psi^{k+\frac{\theta}{p}} D_x^k u|_{C(D)}+[\psi^{n+\frac{\theta}{p}+\delta} D_x^{n}u]_{C^{\delta}(D)}\leq C(d,\gamma,p,\theta)\|u\|_{H_{p,\theta}^{\gamma}(D)}.
\end{equation*}

\end{enumerate}

\end{lemma}
\begin{proof}
The proofs for $B_{p,\theta}^{\gamma}(D)$ are similar to those for $H_{p,\theta}^{\gamma}(D)$, and we only consider the claims for $H_{p,\theta}^{\gamma}(D)$. 
When $D$ is a half space, the claims are proved by Krylov in \cite[Lemma 2.2, Theorem 2.5]{Krysome19}, and those are generalized by Lototsky in \cite{lototsky2000sobolev} for arbitrary domains. Here, we remark that the results in \cite{lototsky2000sobolev} are still valid for bounded $C^{1,1}$ open sets. The lemma is proved.

\end{proof}

Now we define  solution spaces for the parabolic equation.  For $T\in(0,\infty)$, denote
\begin{equation*}
\begin{aligned}
\bH_{p,\theta}^\gamma(D,T) := L_{p}((0,T);H_{p,\theta}^\gamma(D)).
\end{aligned}
\end{equation*}
We write $u\in \frH_{p,\theta}^{\gamma}(D,T)$ if $u\in \psi^{\alpha/2}\bH_{p,\theta}^{\gamma}(D,T)$,   $u(0,\cdot) \in \psi^{\alpha/2-\alpha/p} B_{p,\theta}^{\gamma-\alpha /p}(D)$, and  there exists $f\in \psi^{-\alpha/2}\bH_{p,\theta}^{\gamma-\alpha} (D,T)$ such that for any $\phi\in C_c^\infty(D)$ 
$$
(u(t,\cdot),\phi)_D=(u(0,\cdot),\phi)_D+\int_0^t (f(s,\cdot),\phi)_D ds, \quad \forall \, t\leq T.
$$
In this case, we write $u_t:=\partial_t u:=f$.  The norm in  $ \frH_{p,\theta}^{\gamma}(D,T)$ is defined as 
\begin{align}
\label{u_t}
\|u\|_{\frH_{p,\theta}^{\gamma}(D,T)} :=& \|\psi^{-\alpha/2} u\|_{\bH_{p,\theta}^{\gamma}(D,T)} + \|\psi^{\alpha/2} u_t\|_{\bH_{p,\theta}^{\gamma-\alpha}(D,T)}\\
& + \|\psi^{-\alpha/2+\alpha/p} u(0,\cdot) \|_{B_{p,\theta}^{\gamma-\alpha /p}(D)}. \nonumber
\end{align}

\begin{remark} \label{21.10.22.2123}
$(i)$ The Banach space $\frH_{p,\theta}^{\gamma}(D,T)$ is a modification of the corresponding space defined for $\alpha=2$ (see  e.g. \cite{KK2004} for $C^1$ domains and  \cite{krylov2001some} for a half space). The completeness of this space for  $\alpha\in (0,2)$ can be proved by repeating the argument in \cite[Remark 3.8]{krylov2001some}.

$(ii)$ The same argument in \cite[Remark 5.5]{kry99weighted}  shows that $C^{\infty}_c([0,T]\times D)$ is dense in $\frH^{\gamma}_{p,\theta}(D,T)$.
\end{remark}

The following  two theorems address our Sobolev regularity results.  The proofs are given in Section \ref{subproof}.

\begin{theorem}[Parabolic case]
\label{21.10.18.2056}
Let   $\gamma\in[0,\infty)$, and assume $f \in \psi^{-\alpha/2}\bH_{p,\theta}^{\gamma}(D,T)$ and $u_0\in \psi^{\alpha/2-\alpha/p} B_{p,\theta}^{\gamma+\alpha-\alpha/p}(D)$.  The  unique solution $u$ in Theorem \ref{21.07.14.13.00} belongs to $\frH_{p,\theta}^{\gamma+\alpha}(D,T)$, and for this solution we have
\begin{align} \label{21.06.22.1912}
     \|u\|_{\frH_{p,\theta}^{\gamma+\alpha}(D,T)} \leq C \left( \|\psi^{\alpha/2} f\|_{\bH_{p,\theta}^{\gamma}(D,T)} + \|\psi^{-\alpha/2+\alpha/p} u_0 \|_{B_{p,\theta}^{\gamma+\alpha-\alpha/p}(D)} \right),
\end{align}
where $C$ depends only on $d,p,\alpha,\gamma,\theta$ and $D$.
\end{theorem}

\begin{theorem}[Elliptic case]
\label{21.10.18.2057}
Let  $\gamma,\lambda\in[0,\infty)$ and assume $f \in \psi^{-\alpha/2}H_{p,\theta}^{\gamma}(D)$.  Then, the unique  solution $u$ in Theorem \ref{21.07.27.12.13} belongs to $\psi^{\alpha/2}H_{p,\theta}^{\gamma+\alpha}(D)$, and for this solution we have
\begin{align} \label{21.07.13.13.31}
    \lambda \| \psi^{\alpha/2}u\|_{H_{p,\theta}^{\gamma}(D)}+ \|\psi^{-\alpha/2}u\|_{H_{p,\theta}^{\gamma+\alpha}(D)} \leq C \|\psi^{\alpha/2} f\|_{H_{p,\theta}^{\gamma}(D)},
\end{align}
where $C$ depends only on $d,p,\alpha,\gamma,\theta$ and $D$. In particular, it is independent of $\lambda$.
\end{theorem}

\begin{remark} \label{230315}
$(i)$ Let $\gamma+\alpha \geq n$, where $n\in \bN_+$. Then, \eqref{21.10.06.0918} and \eqref{21.07.13.13.31} certainly yield
$$
\int_D \left(|\rho^{-\alpha/2}u|^p+|\rho^{1-\alpha/2}Du|^p+\cdots+|\rho^{n-\alpha/2}D^nu|^p \right) \rho^{\theta-d}dx < \infty.
$$

$(ii)$ The  parabolic version  of $(i)$ also holds.

$(iii)$ Let $H_{p,loc}^\gamma(D)$ denote the space of all distributions on $D$ such that $u\eta\in H_p^\gamma$ for any $\eta \in C_c^\infty(D)$. Then, due to the definition of $H_{p,\theta}^\gamma(D)$ (see \eqref{21.10.06.15.13}), one can easily find that $H_{p,\theta}^\gamma(D) \subset H_{p,loc}^\gamma(D)$.  Note that if $D$ is bounded, then $ L_p(D)\subset L_{p,d+\alpha p/2}(D)$. Thus, our result directly implies the ones in \cite{zuazua2017,biccari2018local,cozzi2017interior} when $D$ is a bounded $C^{1,1}$ open set, while the latter ones cover a more general class of open sets.
\end{remark}

The following  estimates are consequences of  Lemma \ref{21.06.15.13.47}$(iv)$.

\begin{corollary}
Let $u$ be taken from Theorem \ref{21.10.18.2056} and
\begin{equation*}
  \mu-d/q\leq \gamma+\alpha-d/p, \quad \tau/q=\theta/p.
\end{equation*}
Then, we have
\begin{equation*}
  \|\psi^{-\alpha/2}u\|_{L_p((0,T);H_{q,\tau}^\mu(D))} \leq C \left( \|\psi^{\alpha/2} f\|_{\bH_{p,\theta}^{\gamma}(D,T)} + \|\psi^{-\alpha/2+\alpha/p} u_0 \|_{B_{p,\theta}^{\gamma+\alpha-\alpha/p}(D)} \right).
\end{equation*}
\end{corollary}

\begin{corollary} \label{cor.23.04.14}
  Let $u$ be taken from Theorem \ref{21.10.18.2057} and
\begin{equation*}
  \mu-d/q\leq \gamma+\alpha-d/p, \quad \tau/q=\theta/p.
\end{equation*}
Then, we have
\begin{equation*}
  \|\psi^{-\alpha/2}u\|_{H_{q,\tau}^\mu(D)} \leq C \|\psi^{\alpha/2} f\|_{H_{p,\theta}^{\gamma}(D)}.
\end{equation*}
\end{corollary}

\begin{remark} \label{23.04.14.21}
We compare  Corollary \ref{cor.23.04.14} with the results  in \cite{Abdel Global,leonori2015basic,nowak2020hs}.  Below we assume $D$ is bounded.

$(i)$ Let $q\in(1,\infty)$, $p=\max\{2,\frac{dq}{d+\alpha q/2}\}$ and $f\in L_{p,d+\alpha p/2}(D)$.
Then, 
\begin{equation*}
  \frac{\alpha}{2}-\frac{d}{q}\leq \alpha-\frac{d}{p}.
\end{equation*}
Thus, by Corollary \ref{cor.23.04.14} and Theorem \ref{21.10.18.2057},
\begin{align*}
  \|\psi^{-\alpha/2}u\|_{H_{q,\tau}^{\alpha/2}(D)} \leq C \|\psi^{\alpha/2}f\|_{L_{p,d}(D)},
\end{align*}
where $\tau/q=d/p$, which implies $u\in H_{q,loc}^{\alpha/2}(D)$. This is proved in \cite{nowak2020hs} given that  $f\in L_p(D)$ and $q>2$ (instead of $q>1$).
Since $L_p(D)\subseteq L_{p,d+\alpha p/2}(D)$,  our result extends the one in \cite{nowak2020hs}, although \cite{nowak2020hs} considered more general domains and non-local equations.

$(ii)$ Let $\alpha/2\leq\beta\leq\alpha$ and $q\geq p$ such that
\begin{equation}
\label{23.04.16.13.49}
  \beta-\frac{d}{q}\leq \alpha-\frac{d}{p},\quad q<\frac{dp}{d-1}.
\end{equation}
In this case, $pd/q\in(d-1,d-1+p)$, which allows us to apply Corollaries \ref{cor.23.04.14} and \ref{cor 10.10-1}$(i)$ to get
\begin{align}
  \|\psi^{\beta-\alpha/2}\Delta^{\beta/2}u\|_{L_{q}(D)}&=\|\psi^{\beta-\alpha/2}\Delta^{\beta/2}u\|_{L_{q,d}(D)} \nonumber\\
  &\leq C \|\psi^{-\alpha/2}u\|_{H_{q,d}^\beta(D)} \nonumber\\
  &\leq  C \|\psi^{\alpha/2}f\|_{L_{p,pd/q}(D)}. \label{23.04.20.01.01}
\end{align}
According to \cite[Theorem 1.4]{Abdel Global},  if $\alpha/2\leq\beta< (1\wedge\alpha)$ and
\begin{equation}
\label{23.04.16.13.50}
  \beta-\frac{d}{q}< \alpha-\frac{d}{p}<\beta,
\end{equation}
then 
\begin{equation*}
\|\psi^{\beta-\alpha/2}\Delta^{\beta/2}u\|_{L_q(D)} \leq C\|f\|_{L_p(D)}.
\end{equation*}
Also, by \cite[Theorem 24]{leonori2015basic}, if 
\begin{align}
\label{23.04.20.00.44}
    \frac{\alpha}{2}-\frac{d}{q}= \alpha-\frac{d}{p},\quad 1<p<\frac{2d}{d+\alpha},
\end{align}
then it holds that 
\begin{align*}
  \|\Delta^{\alpha/4}u\|_{L_q(D)} \leq C \|f\|_{L_p(D)}.
\end{align*}
One can note that \eqref{23.04.16.13.49}, \eqref{23.04.16.13.50}, and \eqref{23.04.20.00.44} are distinct conditions. Note that 
given that $\alpha/2\leq\beta$,  we have $L_p(D) \subset L_{p,d+p(\alpha /2+d/q)}(D)$ for $p,q\in(1,\infty)$ satisfying \eqref{23.04.16.13.49}. Consequently, \eqref{23.04.20.01.01} allows a broader class of data $f$.
\end{remark}

For H\"older regularity of the solution to the parabolic equation, we use the following parabolic embedding.

\begin{prop} \label{21.10.06.0929}
Let  $\alpha\in(0,2)$, $p\in(1,\infty)$, and  $\gamma, \theta\in \bR$. Then, for any  $1/p<\nu\leq1$,
\begin{equation}
 \label{21.08.24.11.17}
 \Big|\psi^{\alpha(\nu-1/2)}\left(u-u(0,\cdot) \right) \Big|_{C^{\nu-1/p}([0,T];H_{p,\theta}^{\gamma+\alpha-\nu \alpha}(D))}\leq C\|u\|_{\frH_{p,\theta}^{\gamma+\alpha}(D,T)},
\end{equation}
where $C$ depends only on $d$, $\nu$, $p$, $\theta$, $\alpha$ and $T$.
\end{prop}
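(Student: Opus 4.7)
The plan is to reduce Proposition \ref{21.10.06.0929} to a classical Sobolev-to-H\"older embedding for Banach-space-valued functions of time, and then to identify the resulting spatial interpolation space with $Y_\nu := H_{p,\theta+\nu\alpha p}^{\gamma+\alpha-\nu\alpha}(D)$. I would first substitute $v := \psi^{-\alpha/2} u$. Two applications of Lemma \ref{21.06.15.13.47}(ii) turn $\|u\|_{\frH_{p,\theta}^{\gamma+\alpha}(D,T)}$ into the equivalent norm
\[
\|v\|_{L_p((0,T);X_1)} + \|v_t\|_{L_p((0,T);X_0)} + \|v(0,\cdot)\|_{B_{p,\theta+\alpha}^{\gamma+\alpha-\alpha/p}(D)},
\]
where $X_1 := H_{p,\theta}^{\gamma+\alpha}(D)$ and $X_0 := H_{p,\theta+\alpha p}^{\gamma}(D)$; the same lemma rewrites the left-hand side of \eqref{21.08.24.11.17} as $|v-v(0,\cdot)|_{C^{\nu-1/p}([0,T];\, Y_\nu)}$. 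By the density in Remark \ref{21.10.22.2123}(ii) it is enough to treat $u \in C_c^\infty([0,T]\times D)$, so that $v$ and its increments are classically smooth.

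Next I would invoke the classical embedding: for any interpolation couple $(X_0, X_1)$ with $X_1 \hookrightarrow X_0$ and any $\nu \in (1/p, 1]$,
\[
W^{1,p}((0,T); X_0) \cap L_p((0,T); X_1) \;\hookrightarrow\; C^{\nu-1/p}\bigl([0,T];\, (X_0, X_1)_{1-\nu, p}\bigr),
\]
with the corresponding bound on H\"older seminorms of $w-w(0)$. This is proved by writing $w(t)-w(s) = \int_s^t w_\tau d\tau$, splitting the increment optimally at each scale $\lambda > 0$ in the $K$-functional description of real interpolation, and applying H\"older's inequality. Applied to $w = v$, the proposition is reduced to the identification of $(X_0, X_1)_{1-\nu, p}$ with $Y_\nu$ up to equivalent norms.

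For that identification, the definition \eqref{21.10.06.15.13} exhibits $H_{p,\theta}^\gamma(D)$ as a retract of a weighted $\ell_p$-sum of copies of $H_p^\gamma(\bR^d)$ via the map $f \mapsto \bigl(\zeta_{-n}(e^n\cdot)f(e^n\cdot)\bigr)_{n\in\bZ}$. Real interpolation commutes with retracts and with weighted $\ell_p$-sums; moreover, because $\psi \approx e^{-n}$ on the support of $\zeta_{-n}$, the weight shift $\theta \mapsto \theta+\nu\alpha p$ is exactly absorbed by the dyadic scaling factor $e^{-n\nu\alpha}$. The identification is therefore reduced to a single scalar interpolation statement on $\bR^d$.

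The main obstacle is that real interpolation on the Bessel-potential scale on $\bR^d$ produces a Besov space, $(H_p^{\gamma_0}(\bR^d), H_p^{\gamma_1}(\bR^d))_{1-\nu, p} = B_{p,p}^{(1-\nu)\gamma_0+\nu\gamma_1}(\bR^d)$, rather than the Bessel-potential space appearing in \eqref{21.08.24.11.17} (the two agree only when $p=2$). I anticipate resolving this either by running the trace step with complex interpolation throughout, using $[H_p^{\gamma_0}, H_p^{\gamma_1}]_{1-\nu} = H_p^{(1-\nu)\gamma_0+\nu\gamma_1}(\bR^d)$ together with the complex-interpolation trace theorem available for UMD couples, or -- following the Krylov--Lototsky philosophy -- by performing the dyadic partition of unity and the time-regularity estimate simultaneously, so that the $H$-scale appears on each shell and survives the summation without appeal to abstract interpolation identities. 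Either route closes the gap and completes the proof.
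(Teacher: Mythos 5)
Your reduction via Lemma~\ref{21.06.15.13.47}(\ref{21.06.15.13.47.5}) to $v=\psi^{-\alpha/2}u$ and the identification of the weight shifts (so that $X_0=H^{\gamma}_{p,\theta+\alpha p}(D)$, $X_1=H^{\gamma+\alpha}_{p,\theta}(D)$, $Y_\nu=H^{\gamma+\alpha-\nu\alpha}_{p,\theta+\nu\alpha p}(D)$) are correct, and you correctly flag the central obstruction: real interpolation of Bessel-potential spaces gives Besov spaces, so the standard $W^{1,p}\cap L_p\hookrightarrow C^{\nu-1/p}([0,T];(X_0,X_1)_{1-\nu,p})$ embedding lands in the wrong scale. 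But the proposal stops at ``either route closes the gap,'' and neither route is actually closed. The complex-interpolation alternative is not a quotable off-the-shelf result---the trace/embedding theorem you invoke is established for real interpolation couples, and replacing $(\cdot,\cdot)_{1-\nu,p}$ by $[\cdot,\cdot]_{1-\nu}$ is a separate (and nontrivial) matter that your proposal does not verify.

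Your second route is the one the paper takes, but you have not named the specific ingredient that makes it work. The paper's proof applies, on each dyadic shell (after rescaling $x\mapsto e^n x$ and localizing with $\zeta_{-n}(e^n\cdot)$), a whole-space trace inequality (Lemma~\ref{21.10.06.15.11}) whose codomain is already the Bessel-potential space $H_p^{\gamma+\alpha-\nu\alpha}$ and which contains a free scaling parameter $a>0$:
\begin{equation*}
\|u(t)-u(s)\|_{H_p^{\gamma+\alpha-\nu\alpha}} \le C|t-s|^{\nu-1/p}\,a^{2\nu-1}\bigl(a\|u\|_{\bH_p^{\gamma+\alpha}(T)}+a^{-1}\|u_t\|_{\bH_p^{\gamma}(T)}\bigr).
\end{equation*}
Choosing $a$ shell-dependently (the shell factor $e^{-n\alpha/2}$) is exactly what redistributes the outer weight $e^{n(\theta+p\alpha(\nu-1/2))}$ into $e^{n(\theta-p\alpha/2)}$ in front of the solution term and $e^{n(\theta+p\alpha/2)}$ in front of the time-derivative term, so that the dyadic sum reproduces $\|u\|_{\frH_{p,\theta}^{\gamma+\alpha}(D,T)}$. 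Without the parameter $a$ the shell-wise weights do not match and the summation does not close; and without stating the shell-wise estimate in $H$-scale rather than Besov-scale, the mismatch you identified is still present. In short: the picture you sketch is the right one, but the proof hinges on a precise quantitative lemma (with its tuning parameter) that your proposal neither states nor establishes.
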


\begin{proof}
We repeat the argument in \cite{krylov2001some} which treats the case $\alpha=2$.  Considering $u-u_0$ in place of $u$, we may assume $u_0=0$. Let $u_t=f$.
By \eqref{21.10.06.15.13} and Lemma \ref{21.06.15.13.47}($\ref{21.06.15.13.47.5}$),
\begin{align}
\nonumber
    &\left|\psi^{\alpha(\nu-1/2)}u\right|^p_{C^{\nu-1/p}([0,T];H^{\gamma+\alpha-\nu\alpha}_{p,\theta}(D))}
\\
&\leq C\sum_{n\in\bZ} e^{n(\theta+p\alpha(\nu-1/2))}
|u(\cdot, e^n\cdot)\zeta_{-n}(e^n\cdot)|^p_{C^{\nu-1/p}([0,T];H^{\gamma+\alpha-\nu\alpha}_{p})}.  \label{10.7.1}
\end{align}
Denote $v_n(t,x)=u(t,e^nx)\zeta_{-n}(e^nx)$. Then, $\partial_t v_n(t,x)=f(t,e^nx)\zeta_{-n}(e^nx)$. Thus,
by Lemma \ref{21.10.06.15.11} with $a=e^{- np\alpha/2}$,
\begin{align*}
 &e^{np\alpha(\nu-1/2)}|u(\cdot,e^n\cdot)\zeta_{-n}(e^n\cdot)|^p_{C^{\nu-1/p}([0,T];H^{\gamma+\alpha-\nu\alpha}_{p})}
 \\
 &\leq Ce^{-np\alpha/2}\|u(\cdot, e^n\cdot)\zeta_{-n}(e^n\cdot)\|^p_{\bH^{\gamma+\alpha}_{p}(T)}  
+Ce^{np\alpha/2}\|f(\cdot, e^n\cdot)\zeta_{-n}(e^n\cdot)\|^p_{\bH^{\gamma}_{p}(T)}. 
\end{align*}
Coming back to \eqref{10.7.1} and using  \eqref{21.10.06.15.13}, 
\begin{align*}
&\left|\psi^{\alpha(\nu-1/2)}u\right|^p_{C^{\nu-1/p}([0,T];H^{\gamma+\alpha-\nu\alpha}_{p,\theta}(D))}
\\
&\leq C\|\psi^{-\alpha/2}u\|^p_{\bH^{\gamma+\alpha}_{p,\theta}(D,T)}
+C\|\psi^{\alpha/2}f\|^p_{\bH^{\gamma}_{p,\theta}(D,T)}.
\end{align*}
This and Lemma \ref{21.06.15.13.47}$(ii)$ prove \eqref{21.08.24.11.17}. 
\end{proof}

Proposition \ref{21.10.06.0929} and Lemma \ref{21.06.15.13.47}($\ref{21.06.15.13.47.8}$) yield the following results.  
\begin{corollary}(H\"older regularity for parabolic equation)
 \label{21.09.22.1621}
Let  $u$  be taken from Theorem \ref{21.10.18.2056}, $1/p<\nu\leq1$,  and 
    $$
    \gamma+\alpha-\nu\alpha-\frac{d}{p} \geq n+\delta, \quad n\in \bN_+, \, \delta\in (0,1).
    $$
   Then, 
    \begin{align*}
        &\sum_{k=0}^n|\psi^{k+\frac{\theta}{p}+\alpha\left(\nu-\frac{1}{2}\right)}D^k_x(u-u(0,\cdot))|_{C^{\nu-1/p}([0,T];C(D))}
        \\
        &+\sup_{t,s\in[0,T]}\frac{[\psi^{n+\delta+\frac{\theta}{p}+\alpha\left(\nu-\frac{1}{2}\right)}D^n_x(u(t,\cdot)-u(s,\cdot))]_{C^{\delta}(D)}}{|t-s|^{\nu-1/p}}\leq C\|u\|_{\frH_{p,\theta}^{\gamma+\alpha}(D,T)}.
    \end{align*}

\end{corollary}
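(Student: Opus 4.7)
The plan is to compose two results already established in the excerpt: Proposition \ref{21.10.06.0929}, which converts the $\frH_{p,\theta}^{\gamma+\alpha}$-norm into a time-H\"older norm valued in $H^{\gamma+\alpha-\nu\alpha}_{p,\theta}(D)$, and the weighted Sobolev--H\"older embedding of Lemma \ref{21.06.15.13.47}(\ref{21.06.15.13.47.8}), which converts $H^{\gamma+\alpha-\nu\alpha}_{p,\theta}(D)$-regularity (under $(\gamma+\alpha-\nu\alpha)-d/p\ge n+\delta$) into pointwise and H\"older-in-space estimates of derivatives up to order $n$. The bridge between them is the scaling identity of Lemma \ref{21.06.15.13.47}(\ref{21.06.15.13.47.5}), which lets one absorb the multiplicative weight $\psi^{\alpha(\nu-1/2)}$ into a shift of the $\theta$-parameter.

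Set $v(t,\cdot) := u(t,\cdot)-u(0,\cdot)$, $\sigma := \alpha(\nu-1/2)$, and $\tilde\theta := \theta+p\sigma$, so that $\tilde\theta/p=\theta/p+\alpha(\nu-1/2)$. Reading off the H\"older seminorm in the conclusion of Proposition \ref{21.10.06.0929} gives, for every $t,s\in[0,T]$,
\begin{equation*}
\|\psi^\sigma(v(t,\cdot)-v(s,\cdot))\|_{H_{p,\theta}^{\gamma+\alpha-\nu\alpha}(D)} \le C|t-s|^{\nu-1/p}\|u\|_{\frH_{p,\theta}^{\gamma+\alpha}(D,T)}.
\end{equation*}
By Lemma \ref{21.06.15.13.47}(\ref{21.06.15.13.47.5}) with $\delta=\sigma$, the left-hand side is equivalent to $\|v(t,\cdot)-v(s,\cdot)\|_{H_{p,\tilde\theta}^{\gamma+\alpha-\nu\alpha}(D)}$. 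Since $(\gamma+\alpha-\nu\alpha)-d/p\ge n+\delta$ is exactly what Lemma \ref{21.06.15.13.47}(\ref{21.06.15.13.47.8}) requires with smoothness $\gamma+\alpha-\nu\alpha$ and weight $\tilde\theta$, the linear embedding applied to the difference $v(t,\cdot)-v(s,\cdot)$ yields
\begin{align*}
&\sum_{k=0}^n\bigl|\psi^{k+\tilde\theta/p}D_x^k(v(t,\cdot)-v(s,\cdot))\bigr|_{C(D)}+\bigl[\psi^{n+\delta+\tilde\theta/p}D_x^n(v(t,\cdot)-v(s,\cdot))\bigr]_{C^\delta(D)}\\
&\qquad\le C|t-s|^{\nu-1/p}\|u\|_{\frH_{p,\theta}^{\gamma+\alpha}(D,T)}.
\end{align*}

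Substituting $\tilde\theta/p=\theta/p+\alpha(\nu-1/2)$ and $v(t,\cdot)-v(s,\cdot)=u(t,\cdot)-u(s,\cdot)$, dividing by $|t-s|^{\nu-1/p}$ and taking supremum over $t\ne s$ produces the second term of the statement and the H\"older-in-time seminorm contributions of the $C^{\nu-1/p}([0,T];C(D))$ norms. The sup-in-$t$ contributions follow by taking $s=0$, in which case $v(0,\cdot)=0$ and $|t|^{\nu-1/p}\le T^{\nu-1/p}$. The only technical step is the weight transfer via Lemma \ref{21.06.15.13.47}(\ref{21.06.15.13.47.5}); once that is in hand the corollary reduces to a direct composition of two estimates already established and requires no further hard analysis.
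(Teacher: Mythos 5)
Your proposal is correct and follows exactly the paper's intended route: the paper explicitly states that Corollary \ref{21.09.22.1621} and Corollary \ref{cor ellipitic} follow from Proposition \ref{21.10.06.0929} together with Lemma \ref{21.06.15.13.47}(\ref{21.06.15.13.47.8}), which is precisely the composition you carry out, with Lemma \ref{21.06.15.13.47}(\ref{21.06.15.13.47.5}) providing the weight transfer. Nothing is missing.
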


\begin{corollary}(H\"older regularity for elliptic equation)
\label{cor ellipitic}
Let $u$ be taken from Theorem \ref{21.10.18.2057}   and 
    $$
    \gamma+\alpha-\frac{d}{p}\geq n+\delta, \quad n\in \bN_+, \, \delta\in (0,1).
    $$
Then,
    \begin{align*}
        \sum_{k=0}^n|\psi^{k+\frac{\theta}{p}-\frac{\alpha}{2}}D^k_xu|_{C(D)}+[\psi^{n+\delta+\frac{\theta}{p}-\frac{\alpha}{2}}D^n_xu]_{C^{\delta}(D)}\leq C\|\psi^{-\alpha/2}u\|_{H_{p,\theta}^{\gamma+\alpha}(D)}.
    \end{align*}
\end{corollary}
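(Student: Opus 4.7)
The plan is to reduce the claim to a direct application of the Sobolev--H\"older embedding in Lemma \ref{21.06.15.13.47}(\ref{21.06.15.13.47.8}), after using the weight-shift identity in Lemma \ref{21.06.15.13.47}(\ref{21.06.15.13.47.5}) to absorb the $\psi^{\alpha/2}$ factor coming from Theorem \ref{21.10.18.2057}. Nothing genuinely new is needed; the whole point of Corollary \ref{cor ellipitic} is to translate the Sobolev regularity of $u$ into pointwise H\"older decay estimates.

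First, by Theorem \ref{21.10.18.2057} the solution satisfies $\psi^{-\alpha/2} u \in H_{p,\theta}^{\gamma+\alpha}(D)$ with
$$
\|\psi^{-\alpha/2} u\|_{H_{p,\theta}^{\gamma+\alpha}(D)} \leq C\|\psi^{\alpha/2} f\|_{H_{p,\theta}^{\gamma}(D)}.
$$
Next, apply Lemma \ref{21.06.15.13.47}(\ref{21.06.15.13.47.5}) with $\delta = \alpha/2$: this gives
$$
u = \psi^{\alpha/2}(\psi^{-\alpha/2}u) \in \psi^{\alpha/2} H^{\gamma+\alpha}_{p,\theta}(D) = H_{p,\theta-\alpha p/2}^{\gamma+\alpha}(D),
$$
together with the norm equivalence
$$
\|u\|_{H_{p,\theta-\alpha p/2}^{\gamma+\alpha}(D)} \approx \|\psi^{-\alpha/2} u\|_{H_{p,\theta}^{\gamma+\alpha}(D)}.
$$

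Now the hypothesis $\gamma+\alpha-\tfrac{d}{p}\geq n+\delta$ is exactly the condition required to invoke Lemma \ref{21.06.15.13.47}(\ref{21.06.15.13.47.8}) with smoothness index $\gamma+\alpha$ and weight index $\theta' := \theta - \alpha p/2$. That lemma then yields, for every $k\leq n$,
$$
|\psi^{k+\theta'/p} D_x^k u|_{C(D)} + [\psi^{n+\theta'/p + \delta} D_x^n u]_{C^\delta(D)} \leq C\|u\|_{H^{\gamma+\alpha}_{p,\theta'}(D)}.
$$
Since $\theta'/p = \theta/p - \alpha/2$, the weights become $k+\theta/p-\alpha/2$ and $n+\delta+\theta/p-\alpha/2$, matching the stated inequality. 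Combining with the norm equivalence above, summing over $k=0,\ldots,n$, produces
$$
\sum_{k=0}^n |\psi^{k+\frac{\theta}{p}-\frac{\alpha}{2}} D^k_x u|_{C(D)} + [\psi^{n+\delta+\frac{\theta}{p}-\frac{\alpha}{2}} D^n_x u]_{C^\delta(D)} \leq C\|\psi^{-\alpha/2}u\|_{H_{p,\theta}^{\gamma+\alpha}(D)},
$$
which is the desired estimate.

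There is no real obstacle; the only points to verify are (a) the bookkeeping of the weight exponents after the shift $\theta \mapsto \theta - \alpha p/2$, and (b) the fact that Lemma \ref{21.06.15.13.47} is known to hold for bounded $C^{1,1}$ open sets as well as for the half space (as noted in its proof). Once these are confirmed, the corollary is immediate.
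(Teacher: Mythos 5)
Your proof is correct and follows precisely the route the paper intends: shift the weight via Lemma \ref{21.06.15.13.47}(\ref{21.06.15.13.47.5}) to pass from $\psi^{-\alpha/2}u \in H^{\gamma+\alpha}_{p,\theta}(D)$ to $u \in H^{\gamma+\alpha}_{p,\theta-\alpha p/2}(D)$, then apply the Sobolev--H\"older embedding of Lemma \ref{21.06.15.13.47}(\ref{21.06.15.13.47.8}) with weight index $\theta-\alpha p/2$. The exponent bookkeeping $(\theta-\alpha p/2)/p = \theta/p - \alpha/2$ is exactly right, so nothing more needs to be said.
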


\begin{remark}  \label{rem 10.27.1} Corollaries \ref{21.09.22.1621} and \ref{cor ellipitic} give  various H\"older estimates of solutions and their arbitrary order derivatives.
Below we elaborate some special cases.  We only consider $\gamma=0,1,2,\cdots$ and $\theta=d$. Note  $L_{p,d}(D)=L_p(D)$.

$(i)$ Parabolic H\"older estimates  when $\gamma=0$. Let $u_0=0$ for simplicity, and assume  $\psi^{\alpha/2}f \in \cap_{p>d/\alpha} \bL_{p,d}(D,T)$.  Obviously this holds  e.g. if $D$ is bounded and $\psi^{\alpha/2}f \in L_{\infty}([0,T]\times D)$.  Taking $\nu \uparrow 1$ and  $p \uparrow \infty$,  from Corollary \ref{21.09.22.1621}  we get
\begin{align*}
    \sup_{x\in D}|\psi^{\alpha/2-\delta}(x)u(\cdot,x)|_{C^{1-\varepsilon}([0,T])}<\infty
\end{align*}
for any small $\delta,\varepsilon>0$. This gives maximal regularity with respect to time variable.  Now, we take $p$ sufficiently large and $\nu$ sufficiently close to $1/p$ to get 
\begin{align*}
    \sup_{x\in D}|\psi^{-\alpha/2+\delta'}(x)u(\cdot,x)|_{C^{\varepsilon'}([0,T])}+\sup_{t\in [0,T]}|\psi^{\alpha/2-\delta'}u(t,\cdot)|_{C^{\alpha-\varepsilon'}(D)}<\infty
\end{align*}
for any small $\delta',\varepsilon'>0$. The second term above gives the maximal interior regularity with respect to space variable, and the first one gives a decay rate near the boundary of $D$. In particular,
$$
\sup_{t\in[0,T]} |u(t,x)|\leq C(\delta') \psi^{\alpha/2-\delta'}(x), \quad \forall \delta'>0.
$$

$(ii)$ Elliptic H\"older estimates when $\gamma=0$. Let $\psi^{\alpha/2}f \in \cap_{p>d/{\alpha}}L_{p,d}(D)$.  Taking  $p$ sufficiently large,  from Corollary \ref{cor ellipitic} we get
\begin{equation}
\label{holder 1}
|\psi^{\alpha/2}u|_{C^{\alpha-\varepsilon}(D)} +|u|_{C^{\alpha/2-\varepsilon}(D)}+|\psi^{-\alpha/2+ \delta}u|_{C^{\varepsilon}(D)}<\infty
\end{equation}
for any small $\delta,\varepsilon >0$.  In \cite{ros2014dirichlet}, it is proved that if $\lambda=0$ and $f\in L_\infty (D)$, then  
\begin{equation}
\label{holder}
|u|_{C^{\alpha/2}(D)}+|\psi^{-\alpha/2}u|_{C^{\beta}(D)}<\infty
\end{equation}
for some $\beta>0$. Thus, there is a slight gap between \eqref{holder 1} and \eqref{holder}. However, our result holds even when $f$ blows up near the boundary since we  assume (at most) $\psi^{\alpha/2}f$ is bounded. 

$(iii)$ Higher order estimates. Let $\gamma=n\in \bN$.  Then, the same arguments above show that all the claims in ($i$)-($ii$)  also hold  for  $\psi D_xu, \psi^2 D^2_xu, \cdots, \psi^n D^n_x u$. That is, the estimates hold if one replaces $u$ by any of these functions.   In particular, if $\psi^{\alpha/2}f, \psi^{\alpha/2+1}D_xf \in  \cap_{p>d/{\alpha}}L_{p,d}(D)$, then, together with \eqref{holder 1},  we  also have 
$$
|\psi^{1+\alpha/2}D_xu|_{C^{\alpha-\varepsilon}(D)} +|\psi D_x u|_{C^{\alpha/2-\varepsilon}(D)}+|\psi^{1-\alpha/2+ \delta}D_xu|_{C^{\varepsilon}(D)}<\infty
$$
for any small $\delta,\varepsilon >0$.
\end{remark}

\mysection{The zero-th order derivative  estimates} \label{zero order}

In this section, we estimate the zero-th order derivative of the solutions to the  parabolic equation 
\begin{equation} \label{21.06.15.1123}
\begin{cases}
\partial_t u(t,x)=\Delta^{\alpha/2}u(t,x)+f(t,x),\quad &(t,x)\in(0,T)\times D,\\
u(0,x)=u_0(x),\quad & x\in D,\\
u(t,x)=0,\quad &(t,x)\in[0,T]\times D^c.
\end{cases}
\end{equation}
as well as to the elliptic equation 
\begin{equation} \label{21.07.26.16.22}
\begin{cases}
\Delta^{\alpha/2}u(x)-\lambda u(x)=f(x),\quad &x\in D,\\
u(x)=0,\quad & x\in D^c.
\end{cases}
\end{equation}

\subsection{Weak solutions for smooth data} \label{21.10.03.1127}
Recall that $X=(X)_{t\geq 0}$ is a rotationally symmetric $\alpha$-stable $d$-dimensional L\'evy process.
Let $p(t,x)=p_d(t,x)$ denote the transition density function of $X$.  Then, it is well known (e.g. \cite[(3.6)]{kim2021lq}) that
\begin{align*}
p_d(t,x)&=(2\pi)^{-d}\int_{\bR^d}e^{ix\cdot\xi}e^{-t|\xi|^{\alpha}}d\xi \nonumber
\\
&\approx t^{-\frac{d}{\alpha}}\wedge \frac{t}{|x|^{d+\alpha}} \approx \frac{t}{(t^{1/\alpha}+|x|)^{d+\alpha}}, \quad \forall (t,x)\in(0,\infty)\times\bR^d.
\end{align*}
The equality above also implies that $p_d(t,\cdot)$ is a radial function and
\begin{equation*}
p_d(t,x)=t^{-\frac{d}{\alpha}}p_d(1,t^{-\frac{1}{\alpha}}x).
\end{equation*}

Denote
$$
d_x=d_{D,x}:=\begin{cases}\rho(x)\, &: \, x\in D, \\
0\,&:\, x\not\in D.
\end{cases}
$$
The following lemma gives an upper bound of $p^D (t,x,y)$. 

\begin{lemma} \label{21.06.15.1700}
For any $x,y\in \bR^d$,
\begin{align*} 
    p^D (t,x,y) 
 \leq
    \begin{cases}
     C\left(1\wedge \frac{d_x^{\alpha/2}}{\sqrt{t}}\right)\left(1\wedge \frac{d_y^{\alpha/2}}{\sqrt{t}}\right)p(t,x-y)\,&\text{if $D$ is a half space},
     \\
     Ce^{-c t}\left(1\wedge \frac{d_x^{\alpha/2}}{\sqrt{t}}\right)\left(1\wedge \frac{d_y^{\alpha/2}}{\sqrt{t}}\right)p(t,x-y)\, &\text{if $D$ is   bounded}.
    \end{cases}
\end{align*}
Here, $C,c>0$ depend only on $d,\alpha$ and $D$. \end{lemma}

\begin{proof}
See \cite[Theorem 5.8]{bogdan2014dirichlet} for the case $D=\bR^d_+$. Let $D$ be  bounded.  Then, by \cite[Theorem 4.5]{bogdan2014dirichlet}, there exist $C,c,r>0$, depending only on $\alpha,d$ and $D$, such that for any $x,y\in D$
\begin{align*}
p^{D}(t,x,y)\leq C e^{-2c t} \left(\frac{d_x^{\alpha/2}}{\sqrt{t}\wedge r^{\alpha/2}}\wedge 1\right)\left(\frac{d_y^{\alpha/2}}{\sqrt{t} \wedge r^{\alpha/2}}\wedge 1\right) p(t\wedge r^{\alpha},x-y).
\end{align*}
This actually implies the claim of the lemma. Indeed, the case  $t<r^{\alpha}$ is obvious, and if $t>r^{\alpha}$ then
$$
p(r^{\alpha},x-y)=r^{-d}p(1,r^{-1}(x-y))\leq r^{-d}p(1,t^{-\frac{1}{\alpha}}(x-y))=r^{-d}t^{d/\alpha}p(t,x-y).
$$
This certainly proves the claim.
The lemma is proved.
\end{proof}

 For  $x\in \bR^d$, we use $\bE_x$ and $\bP_x$ to denote the expectation and distribution of $x+X$. For instance, $\bP_x(X_t\in A):=\bP(x+X_t\in A)$.
Recall that  $f(\partial):=0$ for any function $f$, where $\partial$ is the cemetery point.

Now, we introduce the probabilistic representation of equation \eqref{21.06.15.1123} for smooth data.

\begin{lemma} \label{21.05.12.1839}
\begin{enumerate}[(i)]
\item
Suppose  $u_0\in C_c^\infty( D)$ and $f\in C_c^\infty((0,T)\times D)$. Then,
\begin{align*}
u(t,x)&:=\bE_x[u_0(X_t^{ D})]+\int_0^{t}\bE_x[f(s,X_{t-s}^{D})]ds
\\
&=\int_D p^D (t,x,y) u_0(y)dy + \int_0^t \int_D p^D (t-s,x,y) f(s,y)dyds
\end{align*}
is a  weak solution to \eqref{21.06.15.1123} in the sense of Definition  \ref{21.06.18.1431}(i).

\item Let $u\in C_c^\infty([0,T]\times D)$. Then,
\begin{align} \label{21.05.16.1956}
u(t,x)=\bE_x[u(0,X_t^{ D})]+\int_0^{t}\bE_x[f(s,X_{t-s}^{ D})]ds,
\end{align}
where $f:=\partial_t u - \Delta^{\alpha/2}u$.

\end{enumerate}
\end{lemma}

\begin{proof}

$(i)$ If $u_0=0$, then it follows from  \cite[Lemma 8.4]{zhang2018dirichlet}. The general case is handled similarly.  

$(ii)$ This follows from  \cite[Theorem 5.5]{zhang2018dirichlet}.  We  remark that  \cite[Theorem 5.5]{zhang2018dirichlet} is proved only on  bounded open sets, but the result holds   even on a half space.  Indeed, let  $D_n\subset D=\bR^d_+$ be a sequence of bounded $C^{1,1}$ open sets such that $D_n\uparrow D$ and $supp(u(t,\cdot)) \subset D_n$ for all $t\in[0,T]$. Since $D_n$ is bounded, by  \cite[Theorem 5.5]{zhang2018dirichlet},
\begin{align*}
u(t,x)&=\bE_x[u(0,X_t^{D_n})] +  \bE_x\left[ \int_0^{t\wedge \tau_{D_n}}f(t-s,X_{s})ds\right],
\end{align*}
where $\tau_{D_n}$ is the first exit time of  $D_n$ by $X$, and  $X^{D_n}$ is the killed process of $X$ upon $D_n$ (see Section \ref{Main results}). By following the proof of \cite[Lemma 5.4]{zhang2018dirichlet}, we have $\tau_{D_n}\uparrow \tau_D$. This, since both $u(0)$ and $f$ are bounded,  certainly  yields \eqref{21.05.16.1956}.
The lemma is proved.
\end{proof}

Let $\{T_t\}_{t\geq0}$ and $\{T_t^D\}_{t\geq0}$ be the transition semigroups of $X$ and $X^D$ defined by
\begin{equation*}
    T_tf(x):=\bE_x[f(X_t)], \qquad T_t^D f(x) := \bE_x[f(X_t^D)],
\end{equation*}
respectively. 
    It is known (see e.g. \cite[Example 1.3]{bottcher2013levy} and page 68 of \cite{chung1986doubly}) that $\{T_t\}_{t\geq0}$ and $\{T_t^D\}_{t\geq0}$ are Feller semigroups. For instance, $\{T_t^D\}_{t\geq0}$ is a family of linear operators on $L_\infty(D)$ such that

$(i)$ for any $f\in L_\infty(D)$,
$$
T^D_tT^D_s f=T^D_{t+s}f,
$$

$(ii)$ for any $f\in C_0(D)$, $T^D_t f \in C_0(D)$ and
$$
\lim_{t\to0} \|T^D_t f -f \|_{L_{\infty}(D)}=0.
$$
We also define infinitesimal generators $A$ and $A_D$ by
$$
Af(x):= \lim_{t\downarrow0} \frac{T_t f(x)-f(x)}{t}, \qquad  A_Df(x):= \lim_{t\downarrow0} \frac{T^D_t f(x)-f(x)}{t}
$$
provided that  the limits exist. It is well known (e.g. \cite[Theorem 2.3]{baeumer2018space}) if $f\in C_0(D)$   and one of $Af(x)$ and $A_Df(x)$ exists, then the other also exists and   $Af(x)=A_Df(x)$. Moreover if $f\in C^2_b(\bR^d)$, then $Af(x)=\Delta^{\alpha/2}f(x)$  (e.g. \cite[Lemma 2.6]{baeumer2018space}).

\begin{lemma}
\label{21.07.16.13.21}
  Assume $u\in C_0(D)$ and $Au(x)$ exists for all $x\in D$. If $u$ satisfies
\begin{align} \label{21.06.29.1426}
A u-\lambda u =0 \text{ in } D, \quad \lambda>0,
\end{align}
then, $u\equiv0$.
\end{lemma}

\begin{proof}
Assume $\underset{x\in D}{\sup} u>0$. Since $u\in C_0(D)$, there exists $x_0\in D$ such that
$$
u(x_0)=\sup_{x\in D} u(x).
$$
By the definition of the infinitesimal generator,
$$
A u(x_0)= \lim_{t\downarrow 0} \frac{\bE_{x_0}[u(X_t)]-u(x_0)}{t}\leq 0.
$$
Hence, \eqref{21.06.29.1426} yields  a contradiction. Using the similar argument for $-u$, we conclude that $u\equiv0$. The lemma is proved.
\end{proof}

For $\lambda\geq0$, we define the Green function 
$$
G_D^\lambda(x,y):=\int_0^\infty e^{-\lambda t} p^D(t,x,y)dt.
$$
By  Lemma \ref{21.06.15.1700},    $G_D^\lambda(x,y)$ is well defined if $x\neq y$.

\begin{lemma} \label{21.06.29.1343}
Let $D$ be a half space (resp. a bounded $C^{1,1}$ open set) and $\lambda>0 $ (resp. $\lambda\geq0$). For $f\in C(D)$, define
\begin{equation}
  \label{eqn 10.8.1}
v(x):=\int_{D} G_D^\lambda (x,y)f(y)dy.
\end{equation}
\begin{enumerate}[(i)]
    \item 
$v\in C_0(D)$, $Av(x)$ exists for all $x\in D$, and $v$ is a strong(point-wise) solution to
\begin{equation}  \label{21.09.26.2334}
\begin{cases}
A v(x) -\lambda v(x) = f(x),\quad & x\in D,
\\
v(x)=0,\quad &x \in D^c.
\end{cases}
\end{equation}

\item $v$ is a  weak solution to \eqref{21.07.26.16.22} in the sense of Definition  \ref{21.06.18.1431}(ii).

\item Let $u\in C_c^\infty(D)$ and $g:=\Delta^{\alpha/2}u-\lambda u$. Then,
    \begin{align} \label{21.09.27.0117}
    u(x)=\int_DG_D^{\lambda}(x,y)g(y)dy.
    \end{align}
\end{enumerate}
\end{lemma}
\begin{proof}
$(i)$ The claim follows from \cite[Lemma 3.6]{kim2019boundary} if $D$ is bounded and $\lambda=0$. We repeat its proof for the case $\lambda>0$. 
    First, we show $v\in C_0(D)$.
 By Lemma \ref{21.06.15.1700},
\begin{align*}
    \int_0^\infty  \int_{D} e^{-\lambda t}p^D(t,x,y)|f(y)| dydt    &\leq C\|f\|_{L_\infty(D)} \int_0^\infty  \int_{\bR^d} e^{-\lambda t} p(t,x-y)  dydt
    \\
    &= C\|f\|_{L_\infty(D)} \int_0^\infty e^{-\lambda t}  dt <\infty.
\end{align*}
Thus, by Fubini's theorem,
$$
v(x)=\int_0^\infty e^{-\lambda t} T_t^D f(x) dt.
$$
Since $T_t^D f \in C_0(D)$ and 
$\|T_t^D f\|_{L_{\infty}(D)} \leq  \|f\|_{L_{\infty}(D)}$, 
    the dominated convergence theorem easily yields $v\in C_0(D)$.

Since $\{T_t^D\}_{t\geq0}$ is a Feller semigroup, for any $x\in D$, 
\begin{align*}
    A_D v(x)&:=\lim_{t\downarrow 0} \frac{T_t^D v(x)-v(x)}{t} \quad \text{(provided that the limit exists)}
    \\
    &=\lim_{t\downarrow 0} \frac{1}{t} \left(T_t^D \int_0^\infty e^{-\lambda s} T_s^D f(x) ds - \int_0^\infty e^{-\lambda s} T_s^D f(x) ds\right)
    \\
    &=\lim_{t\downarrow 0} \frac{1}{t} \left( \int_0^\infty e^{-\lambda s} T_{t+s}^D f(x) ds - \int_0^\infty e^{-\lambda s} T_s^D f(x) ds\right)
    \\
    &=\lim_{t\downarrow 0} \frac{1}{t} \left( e^{\lambda t} \int_t^\infty e^{-\lambda s} T_{s}^D f(x) ds - \int_0^\infty e^{-\lambda s} T_s^D f(x) ds\right)
    \\
    &=\lim_{t\downarrow 0} \frac{e^{\lambda t}-1}{t} \int_t^\infty e^{-\lambda s} T_{s}^D f(x) ds + \lim_{t\to0} \frac{1}{t} \left( \int_0^t e^{-\lambda s} T_{s}^D f(x) ds\right)
    \\
    &= \lambda v(x) + f(x).
\end{align*}
Since the limits  exist, we conclude that  $A_Dv$ exists, $A_Dv=Av$, and $v$ satisfies \eqref{21.09.26.2334}. 

$(ii)$ Let $\varphi\in C_c^\infty(D)$. Since $\|T_t^D \varphi\|_{L_{\infty}(D)} \leq  \|\varphi\|_{L_{\infty}(D)}$,
$$
\lim_{t\to\infty} e^{-\lambda t}T_t^D \varphi =0.
$$
Since $\varphi\in C_c^\infty(D)$, we can use the relation $\partial_t T_t^D \varphi=T_t^D \Delta^{\alpha/2} \varphi$ (see \cite[Lemma 8.4]{zhang2018dirichlet}) to get
\begin{align*}
    ( v,\Delta^{\alpha/2}\varphi)_{\bR^d} &= \int_0^\infty ( e^{-\lambda t} T_t^D f,\Delta^{\alpha/2}\varphi)_D dt
    \\
    &=\int_0^\infty ( f,e^{-\lambda t} T_t^D \Delta^{\alpha/2}\varphi)_D dt
    \\
    &=\int_0^\infty ( f,e^{-\lambda t}\partial_t T_t^D \varphi)_D dt
    \\
    &=-\lim_{t\to\infty} ( f,e^{-\lambda t}T_t^D \varphi)_D + ( f, \varphi)_D + \int_0^\infty ( f, \lambda e^{-\lambda t}T_t^D \varphi)_D dt
    \\
    &=(f, \varphi)_D + \lambda ( v,\varphi)_D.
\end{align*}

$(iii)$
Note that $f:=\Delta^{\alpha/2}u-\lambda u\in C(D)$.  Assume $\lambda>0$ for the moment. Take $v(x)$ from \eqref{eqn 10.8.1}.   Then, since $u\in C^2_b(\bR^d)$,  we have $Au=\Delta^{\alpha/2}u$ (e.g. \cite[Lemma 2.6]{baeumer2018space}), and therefore both $u$ and $v$ satisfy the equation
$Aw(x)-\lambda w(x)=f(x)$ for each $x\in D$. 
We conclude $u=v$ due to Lemma \ref{21.07.16.13.21}. If $\lambda=0$ and $D$ is a bounded $C^{1,1}$ open set, then the uniqueness result in \cite[Theorem 3.10]{kim2019boundary} easily yields \eqref{21.09.27.0117}.
The lemma is proved.
\end{proof}

\subsection{Estimates of zero-th order of solutions}

Denote
$$
  \cT_{D}^0u_0(t,x):=\int_D p^D(t,x,y)u_0(y)dy,
  $$
  $$
   \cT_Df(t,x)
    :=\int_0^t \int_D p^D(t-s,x,y)f(s,y) dyds,
$$
$$
\cG_D^{\lambda}f(x):=\int_{D}G^{\lambda}_D(x,y)f(y)dy.
$$
In this subsection,  we prove the operators
\begin{align*}
    &\cT_D^0:\psi^{\alpha/2-\alpha/p}L_{p,\theta}(D)\to\psi^{\alpha/2}\bL_{p,\theta}(D,T),\\
    &\cT_D:\psi^{-\alpha/2}\bL_{p,\theta}(D,T)\to\psi^{\alpha/2}\bL_{p,\theta}(D,T),\\
    &\cG_D^{\lambda}:\psi^{-\alpha/2}L_{p,\theta}(D)\to\psi^{\alpha/2}L_{p,\theta}(D)
\end{align*}
are bounded.  Our proofs highly depend on  the following lemma, which is proved  in Lemma \ref{re2}.

\begin{lemma}
\label{21.05.17.15.05}
Let $\alpha\in(0,2)$, $\gamma_0$, $\gamma_1\in\bR$.
Suppose that
\begin{equation*}
-\frac{2}{\alpha}<\gamma_0,\quad -2<\gamma_1-\gamma_0\leq2+\frac{2}{\alpha}.
\end{equation*}
Then,    for any $(t,x)\in (0,\infty)\times\bR^d$,
    \begin{align*}
        \int_{D}p(t,x-y)\frac{d_y^{\gamma_0\alpha/2}}{(\sqrt{t}+d_y^{\alpha/2})^{\gamma_1}}dy\leq C (\sqrt{t}+d_x^{\alpha/2})^{\gamma_0-\gamma_1},
    \end{align*}
    where $C=C(d,\alpha,\gamma_0,\gamma_1, D)$.
     \end{lemma}

We first consider the operator $\cT_D$.

\begin{lemma} \label{21.06.18.1403}
Let $\alpha\in(0,2)$ and $p\in(1,\infty)$. Suppose that
$$
d-1<\theta<d-1+p.
$$
Then, there exists $C=C(d,\alpha,\theta,p,D)$ such that for any $f\in \psi^{-\alpha/2}\bL_{p,\theta}(D,T)$,
\begin{align*}
\|\psi^{-\alpha/2}\cT_Df\|_{\bL_{p,\theta}(D,T)}\leq C\|\psi^{\alpha/2}f\|_{\bL_{p,\theta}(D,T)}.
\end{align*}
\end{lemma}
\begin{proof}
By Lemma \ref{21.06.15.13.47}$(\ref{21.06.15.13.47.5})$ and \eqref{21.10.06.0918}, it suffices to show
\begin{align} \label{21.08.10.1111}
\int_0^T \int_D d_x^{\mu-\alpha p/2}|\cT_Df (t,x)|^p dxdt \leq C\int_0^T \int_D d_x^{\mu+\alpha p/2}|f(t,x)|^p dxdt,
\end{align}
where $\mu:=\theta-d$. For $p'=p/(p-1)$, since $\mu\in (-1,p-1)$, we can take $\beta_0$ satisfying
\begin{align}
\label{21.05.24.23.44}
\frac{2\mu}{p \alpha} + 1 - \frac{4}{p} < \beta_0 < \frac{2\mu}{p \alpha} + 1 + \frac{2}{p\alpha}
\end{align}
and
\begin{align}
\label{21.05.24.23.45}
-\frac{2(p-1)}{p}=-\frac{2}{p'}< \beta_0 <\left(2+\frac{2}{\alpha}\right)\frac{1}{p'}=\left(2+\frac{2}{\alpha}\right)\frac{p-1}{p}.
\end{align}
Since  $1 - \frac{2}{p}<  \frac{2\mu}{p \alpha}+ 1+\frac{2}{p\alpha}  - \frac{2}{p}$ and  $\frac{2\mu}{p \alpha} +1 <  \frac{2(p-1)}{p\alpha}+1$, we can take constants $\beta_1$ and $\beta_2$ such that 
\begin{align} \label{21.08.10.1109}
   1 - \frac{2}{p}< \beta_0 - \beta_1 < \frac{2\mu}{p \alpha}+ 1+\frac{2}{p\alpha}  - \frac{2}{p}  \end{align}
and
\begin{align} \label{21.08.10.1110}
    \frac{2\mu}{p \alpha} +1 < \beta_0+\beta_2 < \frac{2(p-1)}{p\alpha}+1.
\end{align}
Let $R_{t,x} := \frac{d_x^{\alpha/2}}{\sqrt{t}+d_x^{\alpha/2}}$. By Lemma \ref{21.06.15.1700} and H\"older's inequality,
\begin{align} \label{21.06.18.1054}
|\cT_{D} f(t,x)|\leq& C\left(\int_0^t \int_{D} p(t-s,x-y) d_y^{-\alpha \beta_0 p'/2} R_{t-s,x}^{(1-\beta_1)p'} R_{t-s,y}^{(1-\beta_2)p'} dyds \right)^{1/{p'}} \nonumber
\\
&\times \left(\int_0^t \int_{D}  p(t-s,x-y)d_y^{\alpha \beta_0 p/2}  R_{t-s,x}^{\beta_1 p} R_{t-s,y}^{\beta_2 p} |f(s,y)|^p dyds\right)^{1/p} \nonumber
\\
=:&C\times I(t,x)\times II(t,x).
\end{align}
By Lemma \ref{21.05.17.15.05} with $\gamma_0=(1-\beta_2)p'-\beta_0 p'$ and $ \gamma_1=(1-\beta_2)p'$, we have
\begin{align*}
    \int_{D} p(t-s,x-y) d_y^{-\alpha \beta_0 p'/2}R_{t-s,y}^{(1-\beta_2)p'} dy &\leq C (\sqrt{t-s} + d_x^{\alpha/2} )^{-\beta_0 p'}.
\end{align*}
Using this inequality and changing variables,
\begin{align} \label{21.06.18.1053}
    I(t,x)^{p'}&\leq C d_x^{\alpha(1-\beta_1)p'/2} \int_0^t (\sqrt{t-s} + d_x^{\alpha/2} )^{-\beta_0 p'-(1-\beta_1)p'} ds \nonumber
    \\
    &\leq C d_x^{-\alpha \beta_0 p'/2} \int_0^{\infty } d_x^{\alpha}(\sqrt{s} + 1 )^{-\beta_0 p'-(1-\beta_1)p'} ds
    = C d_x^{\alpha-\alpha \beta_0 p'/2}.
\end{align}
Therefore, due to \eqref{21.06.18.1054}, \eqref{21.06.18.1053} and Fubini's theorem,
\begin{align} 
\nonumber
   &\int_0^T \int_{D} d_x^{\mu}|d_x^{-\alpha/2} \cT_{D} f(t,x)|^p dxdt \leq C \int_0^T \int_{D} d_x^{\mu+\alpha p/2 - \alpha -\alpha\beta_0 p/2} II(t,x)^p dxdt 
   \\
   &= C \int_0^T \int_{D}|f(s,y)|^p d_y^{\alpha \beta_0 p/2}   \label{eqn 10.7.4}\\
   &\qquad\qquad\times\left(\int_s^T \int_{D}  d_x^{\mu+\alpha p/2 - \alpha -\alpha\beta_0 p/2}  p(t-s,x-y)  R_{t-s,x}^{\beta_1 p} R_{t-s,y}^{\beta_2 p}  dxdt\right)dyds. \nonumber
\end{align}
Now, again by Lemma \ref{21.05.17.15.05} with $\gamma_0=2\mu/\alpha + p -2 -\beta_0 p + \beta_1 p$ and $\gamma_1=\beta_1 p$,
\begin{align} \label{21.08.10.1511}
    &\int_s^T \int_{D}  d_x^{\mu+\alpha p/2 - \alpha -\alpha\beta_0 p/2} p(t-s,x-y)  R_{t-s,x}^{\beta_1 p} R_{t-s,y}^{\beta_2 p}  dxdt \nonumber
    \\
    &\leq C d_y^{\alpha\beta_2 p/2} \int_s^T (\sqrt{t-s}+d_y^{\alpha/2})^{2\mu/\alpha + p -2 -\beta_0 p - \beta_2 p}dt  \nonumber
    \\
    &\leq C d_y^{\mu + \alpha p/2 - \alpha \beta_0 p/2} \int_0^{\infty}  (\sqrt{t}+1)^{2\mu/\alpha + p -2 -\beta_0 p - \beta_2 p} dt\leq C d_y^{\mu + \alpha p/2 - \alpha \beta_0 p/2}.
\end{align}
This and \eqref{eqn 10.7.4}  yield \eqref{21.08.10.1111}, and the lemma is proved.
\end{proof}

Next, we consider the operator $\cT^0_D$ defined for initial data.
\begin{lemma} \label{21.06.18.1404}
Let $\alpha\in(0,2)$ and $p\in(1,\infty)$. 
Suppose that
$$
d-1<\theta<d-1+p+\left(\alpha (p-1) \wedge \frac{3}{2}\alpha p\right).
$$
Then, there exists $C=C(d,\alpha,\theta,p,D)$ such that for any $u_0\in \psi^{-\alpha/2+\alpha/p}L_{p,\theta}(D)$,
\begin{align*}
\|\psi^{-\alpha/2}\cT_D^0u_0\|_{\bL_{p,\theta}(D,T)}\leq C\|\psi^{-\alpha/2+\alpha/p}u_0\|_{L_{p,\theta}(D)}.
\end{align*}
\end{lemma}

\begin{proof}
As in the proof of Lemma \ref{21.06.18.1403}, it is enough to prove
\begin{align*}
\int_0^T \int_D d_x^{\mu-\alpha p/2}|\cT_D^0 u_0 (t,x)|^p dxdt \leq C \int_D d_x^{\mu+\alpha-\alpha p/2}|u_0(x)|^p dx,
\end{align*}
where $\mu:=\theta-d$. Since $\mu\in(-1,p-1+\frac{3}{2}\alpha p)$, we can choose $\beta_0$ satisfying
\begin{equation*}
    \frac{2\mu}{p \alpha}-1-\frac{2}{p} < \beta_0 < \frac{2\mu}{p \alpha}-1+\frac{2}{p}+\frac{2}{p\alpha}
\end{equation*}
and
\begin{equation*}
    -\frac{2(p-1)}{p} = -\frac{2}{p'} <\beta_0<\left(2+\frac{2}{\alpha}\right) \frac{1}{p'}=\left(2+\frac{2}{\alpha}\right)\frac{p-1}{p},
\end{equation*}
where $p'=p/(p-1)$. Also, since  $\frac{2\mu}{p \alpha}-1+\frac{2}{p}< \frac{2}{p'\alpha} + 1$, we can choose $\beta_1$  satisfying
$$
\frac{2\mu}{p \alpha}-1+\frac{2}{p}<\beta_0+\beta_ 1< \frac{2}{p'\alpha} + 1.
$$
Let $R_{t,x}:=\frac{d_x^{\alpha/2}}{\sqrt{t}+d_x^{\alpha/2}}$. By Lemma \ref{21.06.15.1700} and H\"older's inequality,
\begin{align*}
|\cT_D^0 u_0(t,x)|\leq& C\left( \int_{D} p(t,x-y)d_y^{-\alpha \beta_0 p'/2} R_{t,y}^{(1-\beta_1)p'} dy \right)^{1/{p'}}
\\
&\times \left( \int_{D}  p(t,x-y) d_y^{\alpha\beta_0  p/2}R_{t,x}^p R_{t,y}^{\beta_1 p}|u_0(y)|^pdy\right)^{1/p}
\\
=:& C\times I(t,x)\times II(t,x),
\end{align*}
By Lemma \ref{21.05.17.15.05} with $\gamma_0=(1-\beta_1)p'-\beta_0 p' $ and $\gamma_1=(1-\beta_1)p'$, we have
\begin{align*}
    I(t,x)^{p'}&=\int_{D} p(t,x-y) d_y^{-\alpha\beta_0 p'/2} R_{t,y}^{(1-\beta_1)p'} dy\leq C(\sqrt{t}+d_x^{\alpha/2})^{-\beta_0 p'}
\end{align*}
Therefore, applying Fubini's theorem,
\begin{align} \label{21.06.18.1349}
   &\int_0^T \int_{D} d_x^{\mu}|d_x^{-\alpha/2} \cT_D^0 u_0(t,x)|^p  dxdt \nonumber
   \\
   &\leq C \int_0^T \int_{D} d_x^{\mu-\alpha p/2} (\sqrt{t}+d_x^{\alpha/2})^{-\beta_0 p} II(t,x)^p dxdt \nonumber
   \\
   &\leq C \int_{D}|u_0(y)|^p d_y^{\alpha\beta_0 p/2} K(T,y)dy,
\end{align}
where
\begin{align*}
    K(T,y)&:=\int_0^T R_{t,y}^{\beta_1p} \int_{D} p(t,x-y)  d_x^{\mu-\alpha p/2}(\sqrt{t}+d_x^{\alpha/2})^{-\beta_0 p} R_{t,x}^p  dxdt
    \\
    &=\int_0^T R_{t,y}^{\beta_1p} \int_{D} p(t,x-y) d_x^{\mu}(\sqrt{t}+d_x^{\alpha/2})^{-\beta_0 p-p}  dxdt.
\end{align*}
By Lemma \ref{21.05.17.15.05} with $\gamma_0=2\mu/\alpha$ and $\gamma_1=\beta_0 p + p$,
\begin{align*}
    K(T,y) &\leq C \int_0^T R_{t,y}^{\beta_1 p} (\sqrt{t}+d_y^{\alpha/2})^{2 \mu/\alpha -\beta_0 p - p} dt
    \\
    &\leq C d_y^{\mu - \alpha \beta_0 p / 2 -\alpha p /2+\alpha} \int_0^\infty (\sqrt{t}+1)^{2 \mu/\alpha -\beta_0 p - p -\beta_1 p}dt \\
    &\leq C d_y^{\mu - \alpha \beta_0 p / 2 -\alpha p /2+\alpha}.
\end{align*}
This with \eqref{21.06.18.1349}   proves  the lemma.
\end{proof}

Finally, we consider the operator $\cG^{\lambda}_D$ for  elliptic equation  \eqref{21.07.26.16.22}.
\begin{lemma}
\label{21.08.03.11.20}
Let $\alpha\in(0,2)$, $p\in(1,\infty)$ and $\theta\in(d-1,d-1+p)$.
Suppose that $D$ is a half space (resp. a bounded $C^{1,1}$ open set) and $\lambda>0$ (resp. $\lambda\geq0$).
Then, for any $f\in \psi^{-\alpha/2}L_{p,\theta}(D)$, 
\begin{align*}
    \|\psi^{-\alpha/2} \cG_D^{\lambda}f\|_{L_{p,\theta}(D)}\leq C\|\psi^{\alpha/2}f\|_{L_{p,\theta}(D)},
\end{align*}
where $C=C(d,p,\alpha,\theta,D)$ is independent of $\lambda$.
\end{lemma}

\begin{proof}
As before, we need to show  
\begin{align}
\label{21.08.10.18.18}
\int_D d_x^{\mu-\alpha p/2}|\cG_D^\lambda f(x)|^p dx \leq C \int_D d_x^{\mu+\alpha p/2}|f(x)|^p dx,
\end{align}
where $\mu:=\theta-d$. Take $\beta_0, \beta_1$ and $\beta_2$ satisfying \eqref{21.05.24.23.44}-\eqref{21.08.10.1110}.
Let $R_{t,x} := \frac{d_x^{\alpha/2}}{\sqrt{t}+d_x^{\alpha/2}}$. By Lemma \ref{21.06.15.1700} and H\"older's inequality,
\begin{align} \label{21.08.03.11.10}
|\cG_D^{\lambda} f(x)|\leq& C\left(\int_0^\infty \int_{D} p(t,x-y) d_y^{-\alpha \beta_0 p'/2} R_{t,x}^{(1-\beta_1)p'} R_{t,y}^{(1-\beta_2)p'} dydt \right)^{1/{p'}} \nonumber
\\
&\times \left(\int_0^\infty \int_{D}  p(t,x-y)d_y^{\alpha \beta_0 p/2}  R_{t,x}^{\beta_1 p} R_{t,y}^{\beta_2 p} |f(y)|^p dydt\right)^{1/p} \nonumber
\\
=:&C\times I(t,x)\times II(t,x).
\end{align}
Similar argument used to prove \eqref{21.06.18.1053} yields
\begin{align} \label{21.08.03.11.12}
    I(t,x)^{p'} \leq C d_x^{\alpha(1-\beta_1)p'/2} \int_0^\infty (\sqrt{t} + d_x^{\alpha/2} )^{-\beta_0 p'-(1-\beta_1)p'} dt \leq C d_x^{\alpha-\alpha \beta_0 p'/2}.
\end{align}
Therefore, by \eqref{21.08.03.11.10}, \eqref{21.08.03.11.12} and Fubini's theorem,
\begin{align*} 
   &\int_{D} d_x^{\mu}|d_x^{-\alpha/2} \cG_D^{\lambda} f(x)|^p dx
   \\
   &\leq C  \int_{D} d_x^{\mu+\alpha p/2 - \alpha -\alpha\beta_0 p/2} II(t,x)^p dxdt 
   \\
   &= C \int_{D}|f(y)|^p d_y^{\alpha \beta_0 p/2} \Big(\int_0^\infty \int_{D}  d_x^{\mu+\alpha p/2 - \alpha -\alpha\beta_0 p/2}  p(t,x-y)  R_{t,x}^{\beta_1 p} R_{t,y}^{\beta_2 p}  dxdt\Big)dy.
\end{align*}
As in \eqref{21.08.10.1511}, we get
\begin{align*}
    &\int_0^\infty \int_{D}  d_x^{\mu+\alpha p/2 - \alpha -\alpha\beta_0 p/2} p(t,x-y)  R_{t,x}^{\beta_1 p} R_{t,y}^{\beta_2 p}  dxdt
    \\
    &\leq C d_y^{\alpha\beta_2 p/2} \int_0^\infty (\sqrt{t}+d_y^{\alpha/2})^{2\mu/\alpha + p -2 -\beta_0 p - \beta_2 p}dt \leq C d_y^{\mu + \alpha p/2 - \alpha \beta_0 p/2}.
\end{align*}
Thus, we prove \eqref{21.08.10.18.18} and the lemma.
\end{proof}

\mysection{Higher order estimates} \label{21.09.20.1430}

In this section, we prove that  one can raise regularity of solutions as long as the free terms are in appropriate function spaces.

We first prepare some auxiliary results below. 
Let $\{\zeta_n : n\in \bZ\}$ be a collection of functions satisfying \eqref{21.10.03.18.06.1}-\eqref{21.10.03.18.06.3} with $(k_1,k_2)=(1,e^2)$.  We also take $\{\eta_n : n\in \bZ\}$ satisfying \eqref{21.10.03.18.06.1}-\eqref{21.10.03.18.06.3} with $(k_1,k_2)=(e^{-2},e^4)$ and
$$
\eta_n=1 \text{ on } \{x\in D : e^{-n-1}<\rho(x)< e^{-n+3}\}.
$$
Consequently, $\eta_n=1$ on the support of $\zeta_n$ and $\zeta_n \eta_n=\zeta_n$.

\begin{lemma}
\label{21.05.12.10.51}
For any  $\gamma\in\bR$, there exists a constant $C=C(d,\alpha,\gamma)$ such that for $u\in C_c^{\infty}(D)$ and $n\in\bZ$,
\begin{align*}
    &\left\|\Delta^{\alpha/2}\Big((u\zeta_{-n}\eta_{-n})(e^n\cdot)\Big)-
    \zeta_{-n}(e^n\cdot)\Delta^{\alpha/2}\Big((u\eta_{-n})(e^n\cdot) \Big)\right\|_{H_p^{\gamma}}  \\
    &\leq C\left(\left\|\Delta^{\alpha/4}\Big((u\eta_{-n})(e^n\cdot)\Big) \right\|_{H_p^{\gamma}}+\|u(e^n\cdot)\eta_{-n}(e^n\cdot)\|_{H_p^{\gamma}}\right).
\end{align*}
\end{lemma}

\begin{proof} 
By \eqref{21.07.20.1710},
    \begin{align} \nonumber
    &\Delta^{\alpha/2}\big((u\zeta_{-n}\eta_{-n})(e^n\cdot)\big)(x)-\zeta_{-n}(e^nx) \Delta^{\alpha/2}\big((u\eta_{-n})(e^n\cdot)\big)(x)\\
    &-u(e^n x)\eta_{-n}(e^nx)\Delta^{\alpha/2}\zeta_{-n}(e^n\cdot)(x) = C\int_{\bR^d}H_n(x,y)|y|^{-d-\alpha}dy,  \label{21.06.14.1525}
    \end{align}
where
$$
H_n(x,y):=[(u\eta_{-n})(e^n(x+y))-(u\eta_{-n})(e^nx)][\zeta_{-n}(e^n(x+y))-\zeta_{-n}(e^nx)].
$$
In the virtue of \eqref{21.10.03.18.06.2}, for any $m\in \bN_+$,
\begin{align*}
    \left|D^{m}_x\big(\zeta_{-n}(e^n(x+y))-\zeta_{-n}(e^nx)\big)\right| \leq C(m) (1\wedge|y|).
\end{align*}
Thus, $\zeta_{-n}(e^n(x+y))-\zeta_{-n}(e^nx)$ becomes a  point-wise multiplier in $H^{\gamma}_p$ (see e.g.  \cite[Lemma 5.2]{kry99analytic}), and  therefore
$$
\|H_n(\cdot,y)\|_{H_p^\gamma} \leq C (1\wedge|y|) \|(u\eta_{-n})(e^n(\cdot+y))-(u\eta_{-n})(e^n\cdot)\|_{H_p^\gamma}.
$$
By \cite[Lemma 2.1]{zhang2013lp}, the above is bounded by 
\begin{equation}
\label{21.07.24.13.15}
C\left(\|u(e^n\cdot)\eta_{-n}(e^n\cdot)\|_{H_p^{\gamma}} \wedge |y|^{\alpha/2+1}\|\Delta^{\alpha/4}\big(u(e^n\cdot)\eta_{-n}(e^n\cdot)\big)\|_{H_p^{\gamma}}\right).
\end{equation}
By Minkowski's inequality and \eqref{21.07.24.13.15},
\begin{align}  \label{21.06.14.1526}
    &\left\| \int_{\bR^d}H_n(\cdot,y)|y|^{-d-\alpha}dy \right\|_{H_p^{\gamma}} \nonumber\\
    &\leq C \|\Delta^{\alpha/4}\big(u(e^n\cdot)\eta_{-n}(e^n\cdot)\big)\|_{H_p^{\gamma}} \int_{|y|\leq1}|y|^{-d-\alpha/2+1}dy \nonumber
    \\
    &\quad+ C \|u(e^n\cdot)\eta_{-n}(e^n\cdot)\|_{H_p^{\gamma}} \int_{|y|>1}|y|^{-d-\alpha}dy \nonumber
    \\
    &\leq C\left(\|\Delta^{\alpha/4}\big((u\eta_{-n})(e^n\cdot)\big)\|_{H_p^{\gamma}}+\|u(e^n\cdot)\eta_{-n}(e^n\cdot)\|_{H_p^{\gamma}}\right). 
\end{align}
On the other hand, by \eqref{21.07.20.1710} and \eqref{21.10.03.18.06.2},
\begin{align*}
\label{21.06.14.17.45}
    |D^m_x \Delta^{\alpha/2}(\zeta_{-n}(e^n\cdot))(x)| &\leq C \|D^{m+2}_x \zeta_{-n}(e^n\cdot)\|_{L_{\infty}}\int_{|y|\leq 1}|y|^{-d-\alpha+2}dy
    \\
    &\quad +  C \|D^m_x\zeta_{-n}(e^n\cdot)\|_{L_{\infty}}\int_{|y|> 1}|y|^{-d-\alpha}dy \leq C.
\end{align*}
Thus, again by \cite[Lemma 5.2]{kry99analytic}, we have
\begin{equation}
\label{21.06.14.17.45}
    \|u(e^n \cdot)\eta_{-n}(e^n\cdot)\Delta^{\alpha/2}(\zeta_{-n}(e^n\cdot))\|_{H_p^{\gamma}}\leq C\|u(e^n \cdot)\eta_{-n}(e^n\cdot)\|_{H_p^{\gamma}}.
\end{equation}
Combining \eqref{21.06.14.1525}, \eqref{21.06.14.1526} and \eqref{21.06.14.17.45},  we prove the lemma.
\end{proof}

\begin{lemma}
\label{21.05.12.10.49}
Let  
$d-1-\alpha p/2<\theta<d-1+p+\alpha p/2$.
Then,  for any $\gamma \in \bR$ and $u\in  C_c^{\infty}(D)$,
\begin{align} \label{21.06.14.1542}
&\sum_{n\in\bZ} e^{n(\theta-\alpha p/2)} \left\|\zeta_{-n}(e^n\cdot)\Delta^{\alpha/2}\Big( [1-\eta_{-n}(e^n\cdot)]u(e^n\cdot) \Big) \right\|_{H_p^\gamma}^p\leq C\|\psi^{-\alpha/2}u\|_{L_{p,\theta}(D)}^p.
\end{align}
where $C=C(d,p,\gamma,\alpha,\theta,D)$.
\end{lemma}

\begin{proof}
It is certainly enough  to prove \eqref{21.06.14.1542} for only $\gamma=m \in\bN_+$. 

By the choice of $\{\eta_{-n}:n\in\bZ\}$, we have $\zeta_n(x)=\zeta_n(x)\eta_n(x)$ for all $x$, and 
\begin{equation*}
\zeta_{-n}(e^n x)(1-\eta_{-n}(e^n(x+y)))=0 \quad \text{if} \quad  |y|< \delta_0,
\end{equation*}
where $\delta_0:=1-e^{-1}$.  Thus, by \eqref{21.07.20.1710}, 
    \begin{align}  \label{21.06.14.1539}
F_n(x)&:=\zeta_{-n}(e^nx)\Delta^{\alpha/2}\Big( [1-\eta_{-n}(e^n\cdot)]u(e^n\cdot)\Big)(x)   \nonumber
\\
&=C\int_{|y|\geq \delta_0} u(e^n(x+y))\zeta_{-n}(e^nx) [1-\eta_{-n}(e^n(x+y))] |y|^{-d-\alpha}dy \nonumber
\\
&=C\int_{|x-y|\geq \delta_0} u(e^ny) \Big(\zeta_{-n}(e^nx)[1-\eta_{-n}(e^ny)] |x-y|^{-d-\alpha} \Big)dy.
\end{align}
Denote  
$$B_n:=supp(\zeta_{-n}).
$$ Then, since $\zeta_{-n}(e^n x)(1-\eta_{-n}(e^ny))=0$ for $|x-y| < \delta_0$, by \eqref{21.10.03.18.06.2},  we have
\begin{align*}
&\left|D_x \Big(\zeta_{-n}(e^n x)(1-\eta_{-n}(e^n y))|x-y|^{-d-\alpha}\Big) \right| \\
&\leq C 1_{B_n}(e^n x) |1-\eta_{-n}(e^ny)||x-y|^{-d-\alpha}
\\
&\quad+ C |\zeta_{-n}(e^n x)(1-\eta_{-n}(e^n y))||x-y|^{-d-\alpha-1}
\\
&\leq C 1_{B_n}(e^n x) |1-\eta_{-n}(e^ny)||x-y|^{-d-\alpha}.
\end{align*}
Similarly, for $k\in \bN_+$,
\begin{align*}
&\left|D_x^k \Big(\zeta_{-n}(e^n x)(1-\eta_{-n}(e^ny))|x-y|^{-d-\alpha}\Big) \right| \\
&\leq C(k) 1_{B_n}(e^n x) |1-\eta_{-n}(e^ny)||x-y|^{-d-\alpha}.
\end{align*}
It follows from \eqref{21.06.14.1539} for each $k\in \bN_+$,
\begin{equation}
  \label{eqn 10.8.7}
|D^k_x F_n (x)|\leq C(k) H_n(x),
\end{equation}
where 
\begin{align*}
    H_n(x):= 1_{B_n}(e^n x) \int_{|x-y|\geq \delta_0} |u(e^ny)| \left|1-\eta_{-n}(e^ny)\right| |x-y|^{-d-\alpha}dy.
\end{align*}
Since $\|F_n\|_{H^m_p} \approx \sum_{k\leq m} \|D^k_x F_n\|_{L_p}$, from \eqref{eqn 10.8.7} we get
\begin{align*}
&\sum_{n\in\bZ} e^{n(\theta-\alpha p/2)}\|\zeta_{-n}(e^n\cdot)\Delta^{\alpha/2}\big([1-\eta_{-n}(e^n\cdot)]u(e^n\cdot)\big)\|_{H_p^m}^p
\\
&\leq C\sum_{n\in\bZ} e^{n(\theta-\alpha p/2)} \|H_n\|^p_{L_p}.
\end{align*}
Therefore, to finish the proof of  \eqref{21.06.14.1542}, we only need to show
\begin{align} \label{21.08.16.2002}
    \sum_{n\in\bZ} e^{n(\theta-\alpha p/2)} \|H_n\|_{L_p}^p \leq C\|\psi^{-\alpha/2}u\|_{L_{p,\theta}(D)}^p.
\end{align}

Case 1. Let $d-1+\alpha p/2<\theta<d-1+p+\alpha p/2$. Observe that
\begin{align*}
    &\int_{|y|\geq \delta_0} |u(e^n(x+y))(1-\eta_{-n}(e^n(x+y)))||y|^{-d-\alpha}dy    
    \\
    &\leq\sum_{k=0}^{\infty}\int_{2^{k}\delta_0\leq |y|<2^{k+1}\delta_0} |u(e^n(x+y))||y|^{-d-\alpha}dy
    \\
    &= C(d) e^{n\alpha}\sum_{k=0}^{\infty}\int_{2^{k}e^n\delta_0\leq |y|<2^{k+1}e^n\delta_0} |u(e^nx+y)||y|^{-d-\alpha}dy
    \\
    &\leq C \sum_{k=0}^{\infty}2^{-k\alpha}\frac{1}{e^{nd}2^{kd}}\int_{2^{k}e^n\delta_0\leq |y|<2^{k+1}e^n\delta_0} |u(e^nx+y)|dy
    \\
    &\leq C  \sum_{k=0}^{\infty}2^{-k\alpha} \bM u(e^n x) =C \bM u(e^n x),
\end{align*}
where $\bM u$ is the maximal function of $u$ defined by
\begin{align*}
\bM u(x)=\sup_{x\in B_r(z)}\frac{1}{|B_r(z)|}\int_{B_r(z)}|u(y)|dy.
\end{align*}
Therefore, $H_n(x)\leq C (1_{B_n}\bM u) (e^nx)$. Since $e^n \approx \rho$ on $B_n$, by the change of variables,
\begin{align*}
     \sum_{n\in\bZ} e^{n(\theta-\alpha p/2)} \|H_n\|_{L_p}^p &\leq C\int_{D}|\bM u(x)|^p \rho(x)^{\theta-d-\alpha p/2}dx.
\end{align*}
Due to   \cite[Theorem 1.1]{dyda2019muckenhoupt}, the function $\rho^{\theta-\alpha p/2-d}$ belongs to the class of Muckenhoupt $A_p$-weights,  and therefore we can apply  the Hardy-Littlewood  Maximal inequality (\cite[Theorem 7.1.9]{grafakos2014classical})  to get
\begin{align*}
 \sum_{n\in\bZ} e^{n(\theta-\alpha p/2)} \|H_n\|_{L_p}^p \leq C\int_{\bR^d}|u(x)|^p\rho(x)^{\theta-d-\alpha p/2}dx.
\end{align*}
This proves \eqref{21.08.16.2002} if $d-1+\alpha p/2<\theta<d-1+p+\alpha p/2$.

\vspace{1mm}

Case 2. Let $d-1-\alpha p/2<\theta<d+\alpha p/2$. Then, we can choose $\beta\in(0,\alpha)$ such that 
$$
-1<\theta-d-\alpha p/2+\beta p\leq 0.
$$
By \eqref{21.06.14.1539} and H\"older's inequality, for $p':=p/(p-1)$,
\begin{align} \label{21.06.14.1541}
H_n(x) &\leq 1_{B_n}(e^n x)\left(\int_{|x-y|\geq \delta_0} \frac{|u(e^ny)(1-\eta_{-n}(e^ny))|^p}{|x-y|^{d+\beta p}}dy\right)^{1/p}\nonumber\\
& \qquad\qquad\qquad\times\left(\int_{|x-y|\geq \delta_0} |x-y|^{-d-(\alpha-\beta)p'}dy\right)^{1/p'} \nonumber
\\
& \leq  C 1_{B_n}(e^n x) \left(\int_{|x-y|\geq \delta_0}  \frac{|u(e^ny)|^p}{|x-y|^{d+\beta p}}dy\right)^{1/p}.
\end{align}    
By the change of variables  and Fubini's theorem,
\begin{align} \label{21.06.14.17.55}
&\sum_{n\in\bZ} e^{n(\theta-\alpha p/2)} \|H_n\|_{L_p}^p\nonumber\\
&\leq\sum_{n\in\bZ}e^{n(\theta-\alpha p/2)}\int_{\bR^d}\int_{|x-y|\geq \delta_0}  1_{B_n}(e^n x) \frac{|u(e^ny)|^p}{|x-y|^{d+\beta p}}dydx \nonumber
\\
&= \int_{\bR^d} \left[\sum_{n\in\bZ}e^{n(\theta-d-\alpha p/2+\beta p)}\int_{|x-y|\geq e^n\delta_0}  1_{B_n}(x) |x-y|^{-d-\beta p}dx\right] |u(y)|^pdy.
\end{align}

In the virtue of \eqref{21.06.14.1541} and \eqref{21.06.14.17.55}, to prove \eqref{21.08.16.2002}, it suffices to show that for $y\in D$,
\begin{equation}
    \label{21.06.14.19.11}
    \begin{aligned}
    \sum_{n\in\bZ}e^{n(\theta-d-\alpha p/2+\beta p)}\int_{|x-y|\geq e^n\delta_0}  1_{B_n}(x) |x-y|^{-d-\beta p}dx \leq C d_y^{\theta-d-\alpha p/2}.
\end{aligned}
\end{equation}
For  fixed $y\in D$, we take $n_0=n_0(y)\in\bZ$ such that
$$
e^{n_0+3}\leq d_y<e^{n_0+4}.
$$

If $n\leq n_0$ and $x\in B_n$, then  $e^n<d_x<e^{n+2}\leq e^{n_0+2} < e^{n_0+3} \leq d_y$, and consequently $|x-y|\geq d_y-d_x \geq C e^{n_0}$. 
Thus,
\begin{align} \label{21.06.14.19.09}
 &\sum_{n\leq n_0}e^{n(\theta-d-\alpha p/2+\beta p)}\int_{|x-y|\geq Ce^n} 1_{B_n}(x) |x-y|^{-d-\beta p}dx \nonumber\\
 &\leq C \int_{|x-y|\geq Ce^{n_0}}  \sum_{n\leq n_0} 1_{B_n}(x) \frac{d_x^{\theta-d-\alpha p/2+\beta p}}{|x-y|^{d+\beta p}}dx \nonumber
\\
&\leq  C\int_{|x-y|\geq Ce^{n_0}, d_x< d_y} \frac{d_x^{\theta-d-\alpha p/2+\beta p}}{|x-y|^{d+\beta p}}dx \nonumber
\\
&\leq Ce^{-n_0\beta p}d_y^{\theta-d-\alpha p/2+\beta p} \leq Cd_y^{\theta-d-\alpha p/2},
\end{align}
where $C$ is independent of $y$. For the second inequality above, we used $\underset{n\leq n_0}{\sum}1_{B_n}(x)\leq C1_{d_x<d_y}$, and for the third inequality, we used Lemma \ref{21.05.13.11.18}($ii$) with $\rho=Ce^{n_0}$ and $r=d_y$.

Next, we handle the summation for $n>n_0$. Since $\theta-\alpha p/2-d<0$,
    \begin{align} \label{21.06.14.19.10}
    &\sum_{n>n_0}e^{n(\theta-d-\alpha p/2+\beta p)}\int_{|x-y|\geq \delta_0e^n} 1_{B_n}(x) |x-y|^{-d-\beta p}dx \nonumber\\
    &\leq C \sum_{n>n_0}e^{n(\theta-d-\alpha p/2+\beta p)}\int_{|x-y|\geq\delta_0 e^{n}}|x-y|^{-d-\beta p}dx \nonumber
    \\
    &=C\sum_{n>n_0}e^{n(\theta-d-\alpha p/2)} = Ce^{n_0(\theta-d-\alpha p/2)} \leq C d_y^{\theta-d-\alpha p/2}.
\end{align}
Combining \eqref{21.06.14.19.09} and \eqref{21.06.14.19.10}, we obtain \eqref{21.06.14.19.11}. Thus, \eqref{21.08.16.2002} and the lemma are proved.
\end{proof}

\begin{lemma} \label{21.06.14.1610}
Let 
$d-1-\alpha p/2<\theta<d-1+p+\alpha p/2$ and $\gamma\in\bR$.  Then, for any $u \in  C^{\infty}_c(D)$, 
\begin{align} 
\label{eqn 10.9.5}
&\sum_{n\in\bZ} e^{n(\theta-\alpha p/2)} \left\|\Delta^{\alpha/2}\Big(u(e^n\cdot)\zeta_{-n}(e^n\cdot)\Big)-\zeta_{-n}(e^n\cdot)\Delta^{\alpha/2}(u(e^n\cdot)) \right\|_{H_p^{\gamma}}^p\nonumber
\\
&\leq C \|\psi^{-\alpha /2}u\|_{H_{p,\theta}^{0\vee(\gamma+\alpha/2)}(D)}^p,
\end{align}
where $C=C(d,p,\alpha,\theta, \gamma, D)$.  
\end{lemma}
\begin{proof}
Recall $\eta_{-n}\zeta_{-n}=\zeta_{-n}$. Thus, by the triangle inequality,
\begin{eqnarray*}
&& \left\|\Delta^{\alpha/2}\Big(u(e^n\cdot)\zeta_{-n}(e^n\cdot)\Big)-\zeta_{-n}(e^n\cdot)\Delta^{\alpha/2}(u(e^n\cdot)) \right\|_{H_p^{\gamma}}\\
&&\leq  \left\|\Delta^{\alpha/2}\Big((u\zeta_{-n}\eta_{-n})(e^n\cdot)\Big)-
    \zeta_{-n}(e^n\cdot)\Delta^{\alpha/2}\Big((u\eta_{-n})(e^n\cdot) \Big)\right\|_{H_p^{\gamma}}\\
    &&\quad + \left\|\zeta_{-n}(e^n\cdot)\Delta^{\alpha/2}\Big( [1-\eta_{-n}(e^n\cdot)]u(e^n\cdot) \Big) \right\|_{H_p^\gamma}.
         \end{eqnarray*}
Also, note
$$
\|u\|_{H^{\gamma+\alpha/2}_p} \approx \left( \|u\|_{H^{\gamma}_{p}}+\|\Delta^{\alpha/4}u\|_{H^{\gamma}_p} \right).
$$
Therefore, Lemma \ref{21.05.12.10.51} and  Lemma \ref{21.05.12.10.49} easily lead to the claim of the lemma.
\end{proof}

\begin{lemma}
\label{cor 10.10}
Let $d-1-\alpha p/2<\theta<d-1+p+\alpha p/2$, and $\gamma\geq -\alpha$. Then,
for any $u \in  C^{\infty}_c(D)$,  we have $\Delta^{\alpha/2} u \in  \psi^{-\alpha/2}H^{\gamma}_{p,\theta}(D)$ and
\begin{equation}
 \label{eqn re}
\|\psi^{\alpha/2}\Delta^{\alpha/2}u\|_{H^{\gamma}_{p,\theta}(D)}\leq C \|\psi^{-\alpha/2}u\|_{H^{\gamma+\alpha}_{p,\theta}(D)}.
\end{equation}
\end{lemma}

\begin{proof}
By Lemma \ref{21.06.15.13.47}($ii$) and the relation $\Delta^{\alpha/2}u(e^nx)=e^{-n\alpha} (\Delta^{\alpha/2} u(e^n\cdot))(x)$, 
\begin{eqnarray*}
\|\psi^{\alpha/2}\Delta^{\alpha/2}u\|_{H^{\gamma}_{p,\theta}(D)}^p &\leq&
 C \sum_n e^{n(\theta+\alpha p/2)} \|\zeta_{-n}(e^n\cdot) \Delta^{\alpha/2}u (e^n\cdot)\|^p_{H^{\gamma}_p}
 \\
 &=&C  \sum_n e^{n(\theta-\alpha p/2)} \|\zeta_{-n}(e^n\cdot) \Delta^{\alpha/2}(u (e^n\cdot))\|^p_{H^{\gamma}_p}.
 \end{eqnarray*}
 By \eqref{eqn 10.9.5}, the last term above is bounded by
 \begin{eqnarray*}
 &&C\sum_{n\in\bZ} e^{n(\theta-\alpha p/2)} \left\|\Delta^{\alpha/2}\Big(u(e^n\cdot)\zeta_{-n}(e^n\cdot)\Big)\right\|_{H_p^{\gamma}}^p +
   C\|\psi^{-\alpha/2}u\|_{H_{p,\theta}^{0\vee(\gamma+\alpha/2)}(D)}^p 
  \\
  &&\quad \leq C \|\psi^{-\alpha/2}u\|_{H_{p,\theta}^{\gamma+\alpha}(D)}^p.
  \end{eqnarray*}
  The lemma is proved.
  \end{proof}

  By Lemma \ref{cor 10.10},  for any $\gamma_0\in\bR$ and $\phi\in C^{\infty}_c(D)$,   $\Delta^{\alpha/2}\phi$ belongs to the dual space of 
  $H^{\gamma_0+\alpha}_{p,\theta-\alpha p/2}(D)$ (see Lemma \ref{21.06.15.13.47}($iii$)).  Therefore, for $u\in \psi^{\alpha/2}H^{\gamma_0+\alpha}_{p,\theta}(D)$,  we can define $\Delta^{\alpha/2} u$ as a distribution on $D$ by
\begin{align} \label{21.10.21.2139}
(\Delta^{\alpha/2} u, \phi)_D:=(u, \Delta^{\alpha/2}\phi)_{D}, \quad \phi\in C^{\infty}_c(D).
\end{align}
  
  \begin{corollary}
\label{cor 10.10-1}
Let $d-1-\alpha p/2<\theta<d-1+p+\alpha p/2$. 

(i) Let $\gamma\in\bR$, $u\in \psi^{\alpha/2}H^{\gamma+\alpha}_{p,\theta}(D)$, and  $\Delta^{\alpha/2} u$ be defined as in \eqref{21.10.21.2139}. 
 Then, $\Delta^{\alpha/2} u \in  \psi^{-\alpha/2}H^{\gamma}_{p,\theta}(D)$, and  \eqref{eqn re} holds.
 
 (ii) If $\gamma\geq -\alpha/2$ and $u\in H^{\gamma+\alpha/2}_{p,\theta-\alpha p/2}(D)$, then the left-hand side of  \eqref{eqn 10.9.5} makes sense, and inequality  \eqref{eqn 10.9.5} holds.
  \end{corollary}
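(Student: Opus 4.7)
The plan is to extend the estimates that Lemmas \ref{cor 10.10} and \ref{21.06.14.1610} furnish for $u \in C_c^\infty(D)$ to the full weighted Sobolev spaces by density, exploiting the boundedness of the relevant linear operators.

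For part ($i$), observe first that by Lemma \ref{21.06.15.13.47}($ii$), $\psi^{\alpha/2}H_{p,\theta}^{\gamma+\alpha}(D) = H_{p,\theta-\alpha p/2}^{\gamma+\alpha}(D)$, and by Lemma \ref{21.06.15.13.47}($i$) the space $C_c^\infty(D)$ is dense in it. I would pick a sequence $u_m \in C_c^\infty(D)$ with $u_m \to u$ in this space. Applying \eqref{eqn re} to the differences $u_m - u_{m'}$ shows that $\{\psi^{\alpha/2}\Delta^{\alpha/2}u_m\}$ is Cauchy in $H_{p,\theta}^{\gamma}(D)$ and hence converges to some $w$, and the inequality \eqref{eqn re} for $u$ follows by passing to the limit. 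The remaining task is to identify $\psi^{-\alpha/2}w$ with the distribution $\Delta^{\alpha/2}u$ defined through \eqref{21.10.21.2139}. For each $\phi \in C_c^\infty(D)$, Fubini gives the symmetry $(\Delta^{\alpha/2}u_m, \phi)_D = (u_m, \Delta^{\alpha/2}\phi)_D$ for the smooth approximants. Passing to the limit on the left uses the $H_{p,\theta}^{\gamma}(D)$-convergence of $\psi^{\alpha/2}\Delta^{\alpha/2}u_m$ paired against the compactly supported $\phi$; on the right, it uses the $H_{p,\theta-\alpha p/2}^{\gamma+\alpha}(D)$-convergence of $u_m$ paired with $\Delta^{\alpha/2}\phi$, which will need to lie in the appropriate dual space. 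This yields $\psi^{-\alpha/2}w = \Delta^{\alpha/2}u$ in the sense of \eqref{21.10.21.2139}.

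For part ($ii$), I would approximate $u \in H_{p,\theta-\alpha p/2}^{\gamma+\alpha/2}(D) = \psi^{\alpha/2}H_{p,\theta}^{\gamma+\alpha/2}(D)$ by $u_m \in C_c^\infty(D)$ (again via Lemma \ref{21.06.15.13.47}). Since $\gamma \geq -\alpha/2$, the index $0 \vee (\gamma+\alpha/2)$ on the right-hand side of \eqref{eqn 10.9.5} equals $\gamma + \alpha/2$, so Lemma \ref{21.06.14.1610} applies uniformly in $m$ with right-hand side bounded by $\|u_m\|_{H_{p,\theta-\alpha p/2}^{\gamma+\alpha/2}(D)}^p$. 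For each fixed $n\in\bZ$, the function $u_m(e^n\cdot)\zeta_{-n}(e^n\cdot)$ converges to $u(e^n\cdot)\zeta_{-n}(e^n\cdot)$ in $H_p^{\gamma+\alpha/2}$, and since $\Delta^{\alpha/2}\colon H_p^s \to H_p^{s-\alpha}$ is bounded and the multiplier $\zeta_{-n}(e^n\cdot)$ is smooth, the commutator in each summand converges in the weaker space $H_p^{\gamma-\alpha/2}$ while being uniformly bounded in the stronger space $H_p^{\gamma}$. Weak-star compactness in $H_p^{\gamma}$ identifies the weak limit with the strong $H_p^{\gamma-\alpha/2}$ limit, and Fatou's lemma applied to the weighted sum over $n$ transfers the estimate from $u_m$ to $u$.

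The hard part will be the distributional identification in ($i$): verifying that the pairing $(u, \Delta^{\alpha/2}\phi)_D$ is well defined and that convergence $u_m \to u$ in $H_{p,\theta-\alpha p/2}^{\gamma+\alpha}(D)$ transfers to this pairing. By Lemma \ref{21.06.15.13.47}($iii$), this reduces to placing $\Delta^{\alpha/2}\phi$ in the dual space $H_{p',\theta^*}^{-\gamma-\alpha}(D)$ with $1/p+1/p'=1$ and $(\theta-\alpha p/2)/p + \theta^*/p' = d$. When $-\gamma-\alpha \geq -\alpha$, i.e.\ $\gamma \leq 0$, this is immediate from Lemma \ref{cor 10.10} applied to $\phi$; for larger $\gamma$ one uses that $\Delta^{\alpha/2}\phi$ is smooth on $\bR^d$ and satisfies $|\Delta^{\alpha/2}\phi(x)| \lesssim (1+|x|)^{-d-\alpha}$, from which direct inspection of the dyadic decomposition \eqref{21.10.06.15.13} via the cutoffs $\zeta_{-n}$ yields the required membership.
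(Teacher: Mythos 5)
There is a genuine gap in your argument for part ($i$), and it occurs precisely in the range $\gamma<-\alpha$. Your first step applies \eqref{eqn re} to the differences $u_m-u_{m'}\in C_c^\infty(D)$ to conclude that $\{\psi^{\alpha/2}\Delta^{\alpha/2}u_m\}$ is Cauchy in $H^\gamma_{p,\theta}(D)$. But \eqref{eqn re} is the conclusion of Lemma \ref{cor 10.10}, which is hypothesized only for $\gamma\geq-\alpha$; for $\gamma<-\alpha$ the estimate you need for the Cauchy step is not yet available for smooth compactly supported functions, and assuming it would be circular. You correctly notice that $-\gamma-\alpha\geq-\alpha$ (equivalently $\gamma\leq 0$) is the condition under which Lemma \ref{cor 10.10} lets you place $\Delta^{\alpha/2}\phi$ in the dual space $\psi^{-\alpha/2}H^{-\gamma-\alpha}_{p',\theta'}(D)$, but you use this only as an ``identification'' step tacked on to density. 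In fact the duality estimate is the whole proof for $\gamma<0$: from $|(\Delta^{\alpha/2}u,\phi)_D| = |(u,\Delta^{\alpha/2}\phi)_D|\leq C\|\psi^{-\alpha/2}u\|_{H^{\gamma+\alpha}_{p,\theta}(D)}\|\psi^{-\alpha/2}\phi\|_{H^{-\gamma}_{p',\theta'}(D)}$, plus Lemma \ref{21.06.15.13.47}($iii$), you read off directly that $\Delta^{\alpha/2}u\in\psi^{-\alpha/2}H^\gamma_{p,\theta}(D)$ with the desired bound, and no approximating sequence is needed. This is exactly how the paper handles $\gamma<0$; it reserves the density argument for $\gamma\geq0$, where Lemma \ref{cor 10.10} applies to the differences and $\Delta^{\alpha/2}\phi$ automatically lies in the dual via the embedding $H^{\gamma+\alpha}_{p,\theta-\alpha p/2}(D)\hookrightarrow H^{\alpha}_{p,\theta-\alpha p/2}(D)$.

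Two smaller remarks. For the identification when $\gamma>0$, your decay argument $|\Delta^{\alpha/2}\phi(x)|\lesssim(1+|x|)^{-d-\alpha}$ followed by inspection of the dyadic decomposition can be made to work, but it is a detour: since $\gamma+\alpha\geq\alpha$, convergence $u_m\to u$ already holds in $H^{\alpha}_{p,\theta-\alpha p/2}(D)$, and Lemma \ref{cor 10.10} with index $-\alpha$ places $\Delta^{\alpha/2}\phi$ in the dual of that larger space. Your treatment of part ($ii$) is essentially correct and matches the paper's intent (once part ($i$) is in hand so that the left-hand side of \eqref{eqn 10.9.5} is meaningful, Lemma \ref{21.06.14.1610} plus density does the rest); the Fatou/weak-limit bookkeeping you sketch is not really necessary, but it is not wrong.
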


\begin{proof}
If $\gamma\geq0$, then ($i$) is a consequence of Lemma  \ref{cor 10.10} and Lemma \ref{21.06.15.13.47}($i$). If $\gamma<0$, then by Lemmas \ref{cor 10.10} and \ref{21.06.15.13.47}$(iii)$,
\begin{align*} 
|(\Delta^{\alpha/2} u, \phi)_D|&\leq C \|\psi^{-\alpha/2}u\|_{H_{p,\theta}^{\gamma+\alpha}(D)}\|\psi^{\alpha/2}\Delta^{\alpha/2}\phi\|_{H_{p',\theta'}^{-\gamma-\alpha}(D)}
\\
&\leq C\|\psi^{-\alpha/2}u\|_{H_{p,\theta}^{\gamma+\alpha}(D)}\|\psi^{-\alpha/2}\phi\|_{H_{p',\theta'}^{-\gamma}(D)},
\end{align*}
where $1/p+1/p'=1$ and $\theta/p+\theta'/p'=d$. This implies $\Delta^{\alpha/2}$ is a bounded linear operator from $\psi^{\alpha/2}H_{p,\theta}^{\gamma+\alpha}(D)$ to $\psi^{-\alpha/2}H_{p,\theta}^\gamma(D)$.  Thus, $(i)$ is proved.

Next, we show $(ii)$. The left-hand side of \eqref{eqn 10.9.5} makes sense due to ($i$) and  \eqref{distribution}. Now, the claim of ($ii$) follows from Lemma \ref{21.06.14.1610}  and Lemma \ref{21.06.15.13.47}($i$). The corollary is proved.
\end{proof}

\begin{theorem}[Higher regularity for parabolic equation] \label{21.06.18.1433}
Let  $0\leq \mu\leq \gamma$, and $\theta \in (d-1-\frac{\alpha p}{2},d-1+p+\frac{\alpha p}{2})$. Suppose that 
$f\in\psi^{-\alpha/2}\bH_{p,\theta}^{\gamma-\alpha}(D,T)$,  $u_0\in\psi^{\alpha/2-\alpha/p}B_{p,\theta}^{\gamma-\alpha/p}(D)$, and 
 $u\in \psi^{\alpha/2}{\bH}^{\mu}_{p,\theta}(D,T) \cap \{u=0 \,\text{on}\, [0,T]\times D^c\}$ is a  weak solution to \eqref{21.06.18.1430}. Then,
 $u\in \psi^{\alpha/2}\bH_{p,\theta}^{\gamma}(D,T)$, and for this solution 
\begin{align} \label{21.07.28.1526}
\nonumber
 \|\psi^{-\alpha/2}u\|_{\bH_{p,\theta}^{\gamma}(D,T)}
 &\leq C\Big(\|\psi^{-\alpha/2+\alpha/p}u_0\|_{B_{p,\theta}^{\gamma-\alpha/p}(D)}\\
 &\quad\quad +\|\psi^{\alpha/2}f\|_{\bH_{p,\theta}^{\gamma-\alpha}(D,T)}+\|\psi^{-\alpha/2}u\|_{\bH^{\mu}_{p,\theta}(D,T)}   \Big),
\end{align}
where $C=C(d,p,\alpha,\gamma,\mu,\theta,D)$.
\end{theorem}

\begin{proof}
\textbf{1}. We first note that it is enough to consider the case $\gamma \leq \mu+\alpha/2$.  Indeed, if the claim holds for the case $\gamma \leq \mu+\alpha/2$, then repeating the result with $\mu'=\mu+\alpha/2, \mu+2\alpha/2, \cdots$ in order, we prove the lemma when $\gamma=\mu+k \alpha/2$, $k\in \bN_+$. Now let 
$\gamma=\mu+k \alpha/2+c$, where $k\in \bN_+$ and $c\in (0,\alpha/2)$.  Then,  applying the previous result with $\mu'=\mu+k\alpha/2$, we prove the general case.

\vspace{1mm}

\textbf{2}. 
For each $n\in\bZ$, denote 
$$u_n(t,x):=u(e^{n\alpha}t, e^n x), \quad f_n(t,x):=f(e^{n\alpha}t, e^n x), \quad u_{0n}(x):=u_0(e^nx).
$$
 Then, $u_n(\cdot){\zeta}_{-n}(e^n\cdot) \in \bH^{\mu}_{p}(e^{-n\alpha}T)$ and it is a weak solution (or solution in the sense of distribution) to the equation
\begin{equation*}
\begin{cases}
\partial_t v_n(t,x)=\Delta^{\alpha/2} v_n(t,x)+F_n(t,x),\quad &(t,x)\in(0,e^{-n\alpha}T)\times \bR^d
\\
v_n(0,x)=(u_{0n}(\cdot)\zeta_{-n}(e^n\cdot))(x),\quad & x\in \bR^d
\end{cases}
\end{equation*}
where
\begin{align*}
F_n(t,x)&=e^{n\alpha}(f_n(\cdot,\cdot)\zeta_{-n}(e^n\cdot))(t,x)
\\
&\quad - \left(\Delta^{\alpha/2}(u_n(\cdot,\cdot)\zeta_{-n}(e^n\cdot))(t,x)-\zeta_{-n}(e^nx) \Delta^{\alpha/2}u_n(t,x)\right)
\\
&=:e^{n\alpha}(f_n(\cdot,\cdot)\zeta_{-n}(e^n\cdot))(t,x)-G_n(t,x).
\end{align*}

\vspace{1mm}

\textbf{3}.  By Corollary \ref{cor 10.10-1}($ii$) with $\gamma'=\mu-\alpha/2$, we have 
\begin{align}
\label{21.07.24.15.13}
    \sum_{n\in\bZ}e^{n(\theta-\alpha p/2)}\|G_n(e^{-n\alpha}t,\cdot)\|_{H_p^{\gamma-\alpha}}^p &\leq C\sum_{n\in\bZ}e^{n(\theta-\alpha p/2)}\|G_n(e^{-n\alpha}t,\cdot)\|_{H_p^{\mu-\alpha/2}}^p \nonumber
    \\
    &\leq C\|\psi^{-\alpha/2}u(t,\cdot)\|_{H_{p,\theta}^{\mu}(D)}^p.
\end{align}
Therefore, due to $f \in \psi^{-\alpha/2}\bH_{p,\theta}^{\gamma-\alpha}(D,T)$,
$$
F_n\in\bH_{p}^{\gamma-\alpha}(e^{-n\alpha}T).
$$
Thus, we apply  \cite[Theorem 1]{mikulevivcius2019cauchy} to conclude $u_n\zeta_{-n}(e^n \cdot)\in \bH^{\gamma}_{p}(e^{-n\alpha}T)$,  and
\begin{align} \label{21.07.20.2011}
&\|\Delta^{\alpha/2}(u(\cdot,e^n\cdot)\zeta_{-n}(e^n\cdot))\|_{\bH_p^{\gamma-\alpha}(T)}^p \nonumber
\\
&=e^{n\alpha}\|\Delta^{\alpha/2}(u_n(\cdot,\cdot)\zeta_{-n}(e^n\cdot))\|_{\bH_p^{\gamma-\alpha}(e^{-n\alpha}T)}^p  \nonumber
\\
&\leq Ce^{n\alpha}\|\zeta_{-n}(e^n\cdot)u_{0n}(\cdot)\|_{B_{p}^{\gamma-\alpha/p}}^p
+Ce^{n\alpha}\|G_{n}(\cdot,\cdot)\|_{\bH_p^{\gamma-\alpha}( e^{-n\alpha}T)}^p   \nonumber 
\\
&\quad+Ce^{n\alpha}\|e^{n\alpha}\zeta_{-n}(e^n\cdot) f_n(\cdot,\cdot)\|_{\bH_p^{\gamma-\alpha}(e^{-n\alpha}T)}^p \nonumber 
\\
&=Ce^{n\alpha}\|\zeta_{-n}(e^n\cdot)u_{0n}(\cdot)\|_{B_{p}^{\gamma-\alpha/p}}^p
+C\|G_{n}(e^{-n\alpha}\cdot,\cdot)\|_{\bH_p^{\gamma-\alpha}(T)}^p   \nonumber 
\\
&\quad+C\|e^{n\alpha}\zeta_{-n}(e^n\cdot) f(\cdot,e^n\cdot)\|_{\bH_p^{\gamma-\alpha}(T)}^p.
\end{align}
By \eqref{21.07.24.15.13} and \eqref{21.07.20.2011} (also see Lemma \ref{21.06.15.13.47}($ii$)),
\begin{eqnarray}
\label{final}
&& \sum_{n\in\bZ}e^{n(\theta-\alpha p/2)}\|\Delta^{\alpha/2}(u(\cdot,e^n\cdot)\zeta_{-n}(e^n\cdot))\|_{\bH_p^{\gamma-\alpha}(T)}^p \nonumber
\\
 &&\leq C\Big(\|\psi^{-\alpha/2+\alpha/p}u_0\|_{B_{p,\theta}^{\gamma-\alpha/p}(D)}^p \nonumber
 \\
 &&\quad+\|\psi^{\alpha/2}f\|_{\bH_{p,\theta}^{\gamma-\alpha}(D,T)}^p+\|\psi^{-\alpha/2}u\|_{\bH_{p,\theta}^{\mu}(D,T)}^p\Big). 
 \end{eqnarray}
Therefore, \eqref{final}, Lemma \ref{21.06.15.13.47}($ii$),  and  the relation 
 $$\|u\|_{H^{\gamma}_{p}} \approx \left(\|u\|_{H_p^{\gamma-\alpha}}+\|\Delta^{\alpha/2}u\|_{H_p^{\gamma-\alpha}}\right)$$
 yield 
  \eqref{21.07.28.1526} for $\gamma \leq \mu+\alpha/2$.  The theorem is proved. 
\end{proof}

\begin{theorem}[Higher regularity for elliptic equation] \label{21.07.13.13.38}
Let $\lambda\geq 0$, $0\leq \mu\leq \gamma$,  and $\theta\in (d-1-\frac{\alpha p}{2},d-1+p+\frac{\alpha p}{2})$.
Suppose that $f\in \psi^{-\alpha/2}H_{p,\theta}^{\gamma-\alpha}(D)$, and $u\in \psi^{\alpha/2}H^{\mu}_{p,\theta}(D)\cap \{u=0\,\text{on}\, D^c\}$ is a solution to \eqref{21.07.13.13.02}, then, $u\in \psi^{\alpha/2}H_{p,\theta}^{\gamma}(D)$, and moreover
\begin{align*}
&\lambda \| \psi^{\alpha/2}u\|_{H_{p,\theta}^{\gamma-\alpha}(D)}+ \|\psi^{-\alpha/2}u\|_{H_{p,\theta}^{\gamma}(D)} 
\\
&\leq C\left(\|\psi^{\alpha/2}f\|_{H_{p,\theta}^{\gamma-\alpha}(D)}+\|\psi^{-\alpha/2}u\|_{H^{\mu}_{p,\theta}(D)}\right),
\end{align*}
where $C=C(d,p,\alpha,\gamma,\mu, \theta,D)$. In particular, $C$ is independent of $\lambda$.
\end{theorem}

\begin{proof}
We repeat the argument of the proof of Theorem \ref{21.06.18.1433}.  As before, we may assume $\gamma \leq \mu+\alpha/2$.

 Let $n\in\bZ$. Since $u$ is a weak solution to \eqref{21.07.13.13.02}, $u_n(x):=u(e^n x)$ and $f_n(x):=f(e^nx)$  satisfy the following equation in weak sense;
\begin{equation}
\label{21.07.13.13.43}
\Delta^{\alpha/2}(u_n(\cdot)\zeta_{-n}(e^n\cdot))(x)-e^{n\alpha}\lambda(u_n(\cdot)\zeta_{-n}(e^n\cdot))(x)= F_n(x),\quad x\in \bR^d,
\end{equation}
where
\begin{align*}
F_n(x)&=e^{n\alpha}f_n(x)\zeta_{-n}(e^nx)-G_n(x)
\\
&:=e^{n\alpha}f_n(x)\zeta_{-n}(e^nx)- \left(\Delta^{\alpha/2}(u_n(\cdot)\zeta_{-n}(e^n\cdot))(x)-\zeta_{-n}(e^nx) \Delta^{\alpha/2}u_n(x)\right). 
\end{align*}
   By Corollary \ref{cor 10.10-1}($ii$), we get $G_n\in H_{p}^{\gamma-\alpha}$ and
\begin{align}
\label{21.07.26.15.53}
    \sum_{n\in\bZ}e^{n(\theta-\alpha p/2)}\|G_n\|_{H_p^{\gamma-\alpha}}^p &\leq C\sum_{n\in\bZ}e^{n(\theta-\alpha p/2)}\|G_n\|_{H_p^{\mu-\alpha/2}}^p \nonumber
    \\
    &\leq C\|\psi^{-\alpha/2}u\|_{H_{p,\theta}^{\mu}(D)}^p.
\end{align}
This implies that $F_n\in H_{p}^{\gamma-\alpha}$.

If $\lambda=0$, then the equality \eqref{21.07.13.13.43} easily yields
\begin{align} \label{21.07.26.15.42}
    \|\Delta^{\alpha/2}(u_n(\cdot)\zeta_{-n}(e^n\cdot))\|^p_{H_p^{\gamma-\alpha}} &=\|F_n\|^p_{H_p^{\gamma-\alpha}} \nonumber
    \\
    &\leq  \|e^{n\alpha}f_n(\cdot)\zeta_{-n}(e^n\cdot)\|^p_{H_p^{\gamma-\alpha}}+\|G_n\|_{H_p^{\gamma-\alpha}}^p.
\end{align}

Next, let $\lambda>0$.  Then, by \cite[Theorem 1]{mikulevivcius2017p} (or \cite[Theorem 2.1]{dong2012lp}), we have $u_n(\cdot)\zeta_{-n}(e^n\cdot) \in H_p^{\gamma-\alpha}$ and
\begin{align}
\label{21.07.26.15.43}
    &e^{n\alpha p}\lambda^p \|u_n(\cdot)\zeta_{-n}(e^n\cdot) \|^p_{H_p^{\gamma-\alpha}} + \|\Delta^{\alpha/2}( u_n(\cdot)\zeta_{-n}(e^n\cdot))\|^p_{H_p^{\gamma-\alpha}} \nonumber
    \\
    &\leq C\| F_n\|^p_{H_p^{\gamma-\alpha}} 
    \leq C\left( \|e^{n\alpha}f_n(\cdot)\zeta_{-n}(e^n\cdot)\|^p_{H_p^{\gamma-\alpha}}+\|G_n\|^p_{H_p^{\gamma-\alpha}}\right).
\end{align}
We multiply by $e^{n(\theta-\alpha p/2)}$ to \eqref{21.07.26.15.42} and \eqref{21.07.26.15.43}, then take sum over $n\in \bZ$.  Finally,   we  use  \eqref{21.07.26.15.53} and 
 Lemma \ref{21.06.15.13.47}($ii$) to finish the proof of the theorem.
 \end{proof}

\mysection{Proof of Theorems \ref{21.07.14.13.00}, \ref{21.07.27.12.13}, \ref{21.10.18.2056} and \ref{21.10.18.2057}}
\label{subproof}

We only need to prove   Theorems \ref{21.07.14.13.00} and  \ref{21.07.27.12.13}. This is   because  Theorem \ref{21.10.18.2056} is a consequence of    Theorems \ref{21.07.14.13.00}  and 
\ref{21.06.18.1433}, and  Theorem \ref{21.10.18.2057} is a consequence of Theorems  \ref{21.07.27.12.13}  and  \ref{21.07.13.13.38}.

\vspace{2mm}

\textbf{Proof of Theorem \ref{21.07.14.13.00}}

\textbf{1}. Existence and  estimate of solution.

First, assume $u_0\in C_c^\infty(D)$ and $f\in C_c^\infty((0,T)\times D)$.  Then, by Lemma \ref{21.05.12.1839}, the function $u$ defined in \eqref{rep parabolic} becomes a weak solution to \eqref{21.06.18.1430}. Also, by   Lemmas \ref{21.06.18.1403} and \ref{21.06.18.1404}, 
\begin{align*}
    \|\psi^{-\alpha/2}u\|_{\bL_{p,\theta}(D,T)} \leq C \left(\|\psi^{-\alpha/2+\alpha/p}u_0\|_{L_{p,\theta}(D)} + \|\psi^{\alpha/2}f\|_{\bL_{p,\theta}(D,T)} \right).
\end{align*}
Now we fix $\gamma\in (0,\alpha/p)$. By \eqref{besov weight}, we have $L_{p,\theta'}(D)\subset B_{p,\theta'}^{\gamma-\alpha/p}(D)$ for any $\theta'\in\bR$, and therefore  applying Theorem \ref{21.06.18.1433} with $\mu=0$, we conclude $u\in \psi^{\alpha/2}{\bH}^{\gamma}_{p,\theta}(D,T)$ and
\begin{align*}
    \|\psi^{-\alpha /2}u\|_{\bH^{\gamma}_{p,\theta}(D,T)} \leq C \left(\|\psi^{-\alpha/2+\alpha/p}u_0 \|_{L_{p,\theta}(D)} + \|\psi^{\alpha/2} f\|_{\bL_{p,\theta}(D,T)} \right).
\end{align*}
 Using this and  Corollary \ref{cor 10.10-1}($i$), we have $u_t=\Delta^{\alpha/2}u+f\in \psi^{-\alpha/2}\bH^{\gamma-\alpha}_{p,\theta}(D,T)$, $u\in \frH^{\gamma}_{p,\theta}(D,T)$, and 
\begin{align} \label{21.10.20.1116}
    \|u\|_{\frH_{p,\theta}^\gamma(D,T)} \leq C \left(\|\psi^{-\alpha/2+\alpha/p}u_0 \|_{L_{p,\theta}(D)} + \| \psi^{\alpha/2}f\|_{\bL_{p,\theta}(D,T)} \right).
\end{align}

For general data, we take 
$\{u_{0n}\}_{n\in\bN} \subset C_c^{\infty}(D)$ and $\{f_n\}_{n\in\bN}\subset C_c^{\infty}((0,T)\times D)$ such that 
\begin{equation*} 
    \begin{gathered}
    u_{0n}\to u_0\quad \text{in}\quad \psi^{\alpha/2-\alpha/p}B_{p,\theta}^{\gamma-\alpha/p}(D),
    \\
    f_n\to f\quad \text{in}\quad \psi^{-\alpha/2}\bH_{p,\theta}^{\gamma}(D,T).
    \end{gathered}
\end{equation*}
Define $u_n$ (resp. $u$) by \eqref{rep parabolic} with $u_{0n}$ (resp. $u_0$) and $f_n$ (resp. $f$). Then, by Lemmas \ref{21.06.18.1403} and \ref{21.06.18.1404},  $u_n$ converges to $u$ in the space $\bL_{p,\theta-\alpha p/2}(D,T)$. Also, considering the estimate \eqref{21.10.20.1116} corresponding to $u_n-u_m$, we conclude $u_n$ is a Cauchy sequence in $\frH^{\gamma}_{p,\theta}(D,T)$.  Let  $v$   denote the limit of $u_n$ in  
$\frH^{\gamma}_{p,\theta}(D,T)$.  Then, $v=u$ (a.e.) and  therefore $u$ (or its version) is in $\frH^{\gamma}_{p,\theta}(D,T)$.

  Now we  prove that  \eqref{21.09.20.1816} holds for all $t\leq T$.  Since $u_n$ is a solution to \eqref{21.06.18.1430} in the sense of Definition \ref{21.06.18.1431}, taking $n\to \infty$ and using 
  $$
  \|\psi^{-\alpha/2}(u_n-u)\|_{\bH^{\gamma}_{p,\theta}(D,T)}+\|\psi^{\alpha/2}(\Delta^{\alpha/2}u_n-\Delta^{\alpha/2}u)\|_{\bH^{\gamma-\alpha}_{p,\theta}(D,T)}\to 0,
  $$
   we find that  \eqref{21.09.20.1816} holds  for $u$ almost everywhere on $[0,T]$.  By Theorem \ref{21.10.06.0929}, we know that $(u(t)-u_0,\phi)_D$ is a continuous in $t$,  and therefore we   conclude that  \eqref{21.09.20.1816} holds for all $t \leq T$. Thus, $u$ becomes a weak solution.   \eqref{para} also follows from the estimates of $u_n$.

\vspace{2mm}

\textbf{2}. Uniqueness.  

Let $u\in \psi^{\alpha/2}\bL_{p,\theta}(D,T)\cap \{u=0 \,\,\text{on}\, [0,T]\times D^c\}$ be a weak solution to
\begin{equation*}
\begin{cases}
\partial_t u(t,x)=\Delta^{\alpha/2}u(t,x),\quad &(t,x)\in(0,T)\times D,
\\
u(0,x)=0,\quad & x\in D,
\\
u(t,x)=0,\quad &(t,x)\in[0,T]\times D^c.
\end{cases}
\end{equation*}
Then, by Theorem \ref{21.06.18.1433} with $\mu=0$ and $\gamma>0$, we have $u\in \psi^{\alpha/2}\bH^{\gamma}_{p,\theta}(D,T)$ for any $\gamma>0$.  This and Corollary \ref{cor 10.10-1}($i$) imply  $u\in\frH_{p,\theta}^{\gamma+\alpha}(D,T)$ for any $\gamma \in \bR$. 

Now we take a sequence $u_n\in C_c^\infty([0,T]\times D)$  (cf. Remark  \ref{21.10.22.2123}($ii$))  such that $u_n \to u$ in $\frH_{p,\theta}^{\gamma+\alpha}(D,T)$.  In particular, $u_n \to u$, $u_n(0,\cdot)\to 0$, and $\partial_tu_n \to \partial_t u$ in their corresponding spaces.  Define  $f_n:=\partial_t u_n - \Delta^{\alpha/2}u_n$, then $u_n$ trivially satisfies
$$
\partial_t u_n-\Delta^{\alpha/2}u_n=f_n.
$$
   By Lemma \ref{21.05.12.1839}($ii$),
$$
u_n(t,x)=\bE_x[u_n(0,X_t^D)]+\int_0^t\bE_x[f_n(s,X_{t-s}^D)]ds.
$$
Also, by  Lemmas \ref{21.06.18.1403} and \ref{21.06.18.1404} and Theorem \ref{21.06.18.1433},
\begin{align}
\label{21.07.30.14.09-1}
&\|\psi^{-\alpha/2}u_n\|_{\bH_{p,\theta}^{\gamma+\alpha}(D,T)}
\\
&\leq C\left(\|\psi^{\alpha/p-\alpha/2}u_n(0,\cdot)\|_{B_{p,\theta}^{\gamma+\alpha-\alpha/p}(D)}+\|\psi^{\alpha/2}f_n\|_{\bH_{p,\theta}^{\gamma}(D,T)}\right). \nonumber
\end{align}
By Corollary \ref{cor 10.10-1}($i$), we have $\Delta^{\alpha/2}u_n \to \Delta^{\alpha/2} u$ in $\psi^{-\alpha/2}\bH^{\gamma}_{p,\theta}(D,T)$, and therefore 
$$
f_n=\partial_t u_n -\Delta^{\alpha/2}u_n \to \partial_t u-\Delta^{\alpha/2} u=0
$$
as $n\to \infty$ in the space $\psi^{-\alpha/2}\bH^{\gamma}_{p,\theta}(D,T)$.  From  \eqref{21.07.30.14.09-1},  we conclude that $u=0$.  The uniqueness is also proved.

\vspace{3mm}

\textbf{Proof of Theorem \ref{21.07.27.12.13}} 

If $\lambda>0$ or $D$ is bounded,  then it is enough to repeat the proof of Theorem \ref{21.07.14.13.00}. During the proof, one only needs to replace results for parabolic equations by their corresponding elliptic versions.

\vspace{1mm}

Therefore, we only consider the case when  $\lambda=0$ and $D$ is a half space. 

\textbf{1}. A priori estimate and uniqueness.  

We first prove the a priori estimate
\begin{align}
\label{21.10.22.0141}
    \|\psi^{-\alpha/2} u\|_{H^{\alpha}_{p,\theta}(D)}\leq C\|\psi^{\alpha/2}f\|_{L_{p,\theta}(D)}
\end{align}
holds given that  $u \in L_{p,\theta-\alpha p/2}(D)\cap \{u=0\,\text{on}\,\,D^c\}$ is a weak solution to \eqref{21.07.13.13.02}.

Note that by Theorem \ref{21.07.13.13.38}, we have $u\in \psi^{\alpha/2}H^{\alpha}_{p,\theta}(D)$. 
Assume $u\in C_c^\infty(D)$ for a moment. Then, for any $\lambda>0$,
$$
\Delta^{\alpha/2}u-\lambda u =f-\lambda u \text{ on } D,
$$
where $f:=\Delta^{\alpha/2} u$. Thus, applying \eqref{21.07.13.13.31} for $\lambda>0$ and letting $\lambda\downarrow 0$, we get estimate \eqref{21.10.22.0141} for $\lambda=0$. For general case,  we take $u_n \in C^{\infty}_c(D)$ such that $u_n \to u$ in $\psi^{\alpha/2}H^{\alpha}_{p,\theta}(D)$. Then, by Corollary  \ref{cor 10.10-1}($i$), $\Delta^{\alpha/2}u_n \to \Delta^{\alpha/2}u=f$ in $\psi^{-\alpha/2}L_{p,\theta}(D)$. Consequently, this leads to \eqref{21.10.22.0141}, which certainly implies
$$
  \|\psi^{-\alpha/2} u\|_{L_{p,\theta}(D)}\leq C\|\psi^{\alpha/2}f\|_{L_{p,\theta}(D)}.$$
   The uniqueness result of solution easily follows from this. 

\vspace{1mm}

\textbf{2}. Weak convergence and existence. 

Let $u_n \in \psi^{\alpha/2}L_{p,\theta}(D)\cap \{u=0 \,\,\text{on}\, D^c\}$ denote the solution to equation \eqref{21.07.13.13.02} corresponding to $\lambda=\frac{1}{n}$. Then, by \eqref{21.07.13.13.31}, 
$\{u_{n}\}$ is a bounded sequence in the space $L_p(\bR^d, \rho^{\theta-d-\alpha p/2} dx)$, and therefore there exists a subsequence $\{u_{n_i}\}$ which converges weakly to some $u\in  L_p(\bR^d, \rho^{\theta-d-\alpha p/2} dx)$. Obviously, we have $u=0$ (a.e.) on $D^c$.   By Lemma \ref{cor 10.10}, for any $\phi\in C^{\infty}_c(D)$, $\Delta^{\alpha/2}\phi$ belongs to the dual space of $\psi^{\alpha/2}H^{\alpha}_{p,\theta}(D)$. Therefore, 
$$
(u_{n_i}, \phi)_{\bR^d}=(u_{n_i}, \phi)_{D} \to (u,\phi)_{D}=(u,\phi)_{\bR^d}
$$
and
$$
( u_{n_i},\Delta^{\alpha/2}\phi )_{\bR^d}=(u_{n_i}, \Delta^{\alpha/2}\phi)_D  \to (u,\Delta^{\alpha/2}\phi)_D= ( u,\Delta^{\alpha/2}\phi )_{\bR^d},
$$
as $n_i\to\infty$.  Thus, we conclude $u$ is a weak solution to \eqref{21.07.13.13.02} in $L_{p,\theta-\alpha p/2}(D)\cap \{u=0\, \text{on}\, D^c\}$.  Now we prove the  weak convergence. The above argument shows that   any subsequence of $u_n$ has a further subsequence which converges  weakly  in $L_p(\bR^d, \rho^{\theta-d-\alpha p/2} dx)$, and the limit becomes a solution to \eqref{21.07.13.13.02} in $L_{p,\theta-\alpha p/2}(D)\cap \{u=0\, \text{on}\, D^c\}$. Due to the uniqueness of solution proved above, we conclude that this limit coincides with $u$. This proves the weak convergence, and 
the theorem is proved.

\appendix

\mysection{Auxiliary results} \label{21.09.25.1532}

Recall that $p(t,x)=p_d(t,x)$ is the transition density function of a rotationally symmetric $\alpha$-stable $d$-dimensional L\'evy process. It  is a radial function and 
\begin{align}
\label{21.06.22.15.41}
p_d(t,x) &\approx t^{-\frac{d}{\alpha}}\wedge \frac{t}{|x|^{d+\alpha}} \approx \frac{t}{(t^{1/\alpha}+|x|)^{d+\alpha}}, 
\end{align}
and
\begin{equation} \label{21.06.22.15.40}
p_d(t,x)=t^{-\frac{d}{\alpha}}p_d(1,t^{-\frac{1}{\alpha}}x).
\end{equation}

If $f$ is a radial function, then we put $f(r):=f(x)$ if $r=|x|$.
\begin{lemma}
(i) Let $d\geq 2$ and $f$ be a nonnegative radial function  on $\bR^d$. Then, for $x^1\neq 0$, 
\begin{align} \label{21.06.15.1756}
    \int_{\bR^{d-1}} f(x^1,x')dx' = C(d)|x^1|^{d-1}\int_0^\infty f(|x^1|(1+s^2)^{1/2})s^{d-2}ds.
\end{align}

(ii) Let $d\geq 2$. For any $t>0$ and $x^1\neq 0$, 
\begin{align}
\label{21.08.02.16.12}
    \int_{\bR^{d-1}} p_d(t,x^1,x')dx'\approx p_1(t,x^1),
\end{align}
where the comparability relation depends only on $d$ and $\alpha$.
\end{lemma}
\begin{proof}
($i$) By the change of variables,
\begin{align*}
\int_{\bR^{d-1}} f(x^1,x')dx' &= \int_{\bR^{d-1}} f(x^1,|x^1|x')|x^1|^{d-1}dx'
\\
&=|x^1|^{d-1}\int_{\bR^{d-1}} f(|x^1|(1+|x'|^2)^{1/2})dx'
\\
&=C(d) |x^1|^{d-1}\int_0^\infty f(|x^1|(1+s^2)^{1/2})s^{d-2} ds.
\end{align*}

($ii$)
 By \eqref{21.06.22.15.41} and \eqref{21.06.15.1756}, it suffices to prove that
\begin{align}
\label{21.03.22.14.59}
    \int_0^\infty \frac{t|x^1|^{d-1}s^{d-2} }{(t^{1/\alpha}+|x^1|(1+s^2)^{1/2})^{d+\alpha}} ds \approx \left(t^{-\frac{1}{\alpha}}\wedge\frac{t}{|x^1|^{1+\alpha}}\right).
\end{align}
Let $t|x^1|^{-\alpha}\leq1$, then
\begin{align} \nonumber
    \int_0^\infty \frac{t|x^1|^{d-1}s^{d-2} }{(t^{1/\alpha}+|x^1|(1+s^2)^{1/2})^{d+\alpha}} ds&\approx  \int_0^\infty \frac{t|x^1|^{d-1}s^{d-2}}{\left(|x^1|(1+s^2)^{1/2}\right)^{d+\alpha}} ds
    \\
    &= C(d,\alpha) \frac{t}{|x^1|^{1+\alpha}}.  \label{equiv}
\end{align}
 Now let  $t|x^1|^{-\alpha}\geq1$. We put  
\begin{align*} \label{21.06.16.1806}
    \int_0^\infty \frac{t|x^1|^{d-1}s^{d-2} }{(t^{1/\alpha}+|x^1|(1+s^2)^{1/2})^{d+\alpha}} ds = \int_0^{t^{1/\alpha}|x^1|^{-1}}\cdots +\int_{t^{1/\alpha}|x^1|^{-1}}^{\infty}\cdots  =:I+II.
\end{align*}
Then,
\begin{equation*}
    \begin{gathered}
    I\leq t^{-\frac{d}{\alpha}}|x^1|^{d-1}\int_0^{t^{1/\alpha}|x^1|^{-1}}s^{d-2}ds=C(d,\alpha)t^{-\frac{1}{\alpha}},
    \\
    II\leq t|x^1|^{-1-\alpha}\int_{t^{1/\alpha}|x^1|^{-1}}^{\infty} \frac{s^{d-2}}{(1+s^2)^{\frac{d+\alpha}{2}}} ds = C(\alpha)t^{-\frac{1}{\alpha}}.
    \end{gathered}
\end{equation*}
Therefore, the left-hand side of \eqref{21.03.22.14.59} is controlled by the right-hand side.  Due to \eqref{equiv}, to prove \eqref{21.03.22.14.59}, we only need a proper  lower bound of $I$.
 Let $t|x^1|^{-\alpha}\geq1$.  By the changing variables $s=t^{1/\alpha}|x^1|^{-1}l$,
\begin{align*}
    I &\geq\int_0^{t^{1/\alpha}|x^1|^{-1}} \frac{t|x^1|^{d-1}s^{d-2} }{(t^{1/\alpha}+(t^{2/\alpha}+|x^1|^{-2}s^2)^{1/2})^{d+\alpha}}ds
    \\
    &=t^{-\frac{1}{\alpha}}\int_0^1\frac{l^{d-2}}{(1+(1+l^2)^{1/2})^{d+\alpha}}dl=C(d,\alpha)t^{-\frac{1}{\alpha}}.
\end{align*}
The lemma is proved.
\end{proof}

\begin{lemma}
\label{re1}
Let $\alpha\in(0,2)$ and $\gamma_0$, $\gamma_1\in\bR$.
Suppose that
\begin{equation}
\label{21.05.13.13.02}
-\frac{2}{\alpha}<\gamma_0,\quad -2<\gamma_1-\gamma_0\leq2+\frac{2}{\alpha}.
\end{equation}
 Then, for any $(t,x)\in (0,\infty)\times\bR^d $, 
    \begin{align} \label{21.06.15.2038}
    \int_{\bR^d}p(t,x-y)\frac{|y^1|^{\gamma_0\alpha/2}}{(\sqrt{t}+|y^1|^{\alpha/2})^{\gamma_1}}dy\leq C(\sqrt{t}+|x^1|^{\alpha/2})^{\gamma_0-\gamma_1}.
    \end{align}
    where $C=C(d,\alpha,\gamma_0,\gamma_1)$. 
\end{lemma}

\begin{proof}
 It suffices to prove \eqref{21.06.15.2038}  when $t=1$.  Indeed, if it holds for $t=1$, then  by \eqref{21.06.22.15.40},
\begin{align*}
&\int_{\bR^d} p(t,x-y) \frac{|y^1|^{\gamma_0\alpha/2}}{(\sqrt{t}+|y^1|^{\alpha/2})^{\gamma_1}} dy
\\
&= C t^{\frac{\gamma_0-\gamma_1}{2}}\int_{\bR^d}p(1,t^{-\frac{1}{\alpha}}x-y)\frac{|y^1|^{\gamma_0\alpha/2}}{(1+|y^1|^{\alpha/2})^{\gamma_1}}dy
\\
&\leq C t^{\frac{\gamma_0-\gamma_1}{2}}(1+t^{-\frac{1}{2}}|x^1|^{\alpha/2})^{\gamma_0-\gamma_1}
\\
&=C (\sqrt{t}+|x^1|^{\alpha/2})^{\gamma_0-\gamma_1}.
\end{align*}

Thus, we may assume $t=1$. By \eqref{21.08.02.16.12} and \eqref{21.06.22.15.41},
\begin{align*}
   & \int_{\bR^d}p_d(1,x-y)\frac{|y^1|^{\gamma_0\alpha/2}}{(1+|y^1|^{\alpha/2})^{\gamma_1}}dy
   \\
    &\approx \int_{\bR}\left(1\wedge\frac{1}{|x^1-y^1|^{1+\alpha}}\right)\frac{|y^1|^{\gamma_0\alpha/2}}{(1+|y^1|^{\alpha/2})^{\gamma_1}}dy^1
    =:I(x^1).
\end{align*}
Thus, it only remains to show for $x^1\in\bR$,
\begin{align}
\label{21.08.02.17.25}
    I(x^1)\leq C(1+|x^1|^{\alpha/2})^{\gamma_0-\gamma_1}.
\end{align}

Case 1.  Let $|x^1|\leq 1$.   Put
\begin{align*} 
I(x^1)&= \int_{|y^1|\leq 2}\cdots dy^1+\int_{|y^1|>2}\cdots dy^1=:I_1(x^1)+I_2(x^1).
\end{align*}
If $|y^1|\leq2$, then by \eqref{21.05.13.13.02}, 
\begin{align*} 
    I_1(x^1)\leq C \int_{|y^1|\leq 2}|y^1|^{\alpha\gamma_0/2}dy^1=C.
\end{align*}
If $|y^1|>2$, then $|x^1-y^1|\geq |y^1|/2$. Thus, by \eqref{21.05.13.13.02},
\begin{eqnarray*} 
    I_2(x^1)&\leq& C\int_{|y^1|>2}\frac{1}{|x^1-y^1|^{1+\alpha}}\left(\frac{|y^1|^{\alpha/2}}{1+|y^1|^{\alpha/2}}\right)^{\gamma_1}|y^1|^{\alpha(\gamma_0-\gamma_1)/2}dy^1 
    \\
    &\leq& C\int_{|y^1|>2}|y^1|^{\frac{\alpha(\gamma_0-\gamma_1-2)}{2}-1}dy^1=C.
\end{eqnarray*}
Therefore,  $I$ is bounded and \eqref{21.08.02.17.25} is proved for $|x^1|\leq 1$. 

\vspace{1mm}

Case 2. Let $|x^1|>1$. Put
\begin{align*}
    I(x^1)&= \int_{|y^1|\geq2|x^1|} \cdots +\int_{|x^1|/2<|y^1|<2|x^1|} \cdots +\int_{1/2<|y^1|\leq|x^1|/2} \cdots +\int_{|y^1|\leq1/2} \cdots \nonumber
    \\
    &=: J_1(x^1)+J_2(x^1)+J_3(x^1)+J_4(x^1).  
    \end{align*}
First, we estimate $J_1$. Note that if $r>1$, then
\begin{align} \label{21.06.15.1347}
\frac{1}{2}\leq \frac{r^{\alpha/2}}{1+r^{\alpha/2}}\leq 1.
\end{align}
For $|y^1|>2|x^1|$, we have $|y^1|>2$.  Thus, \eqref{21.05.13.13.02} and \eqref{21.06.15.1347} yield
\begin{align} \label{21.06.16.1528}
J_1(x^1) &\leq \int_{|y^1|\geq 2|x^1|}\frac{1}{|x^1-y^1|^{1+\alpha}}\left(\frac{|y^1|^{\alpha/2}}{1+|y^1|^{\alpha/2}}\right)^{\gamma_1}|y^1|^{\frac{\alpha(\gamma_0-\gamma_1)}{2}} dy^1 \nonumber
\\
&\leq C\int_{|y^1|\geq 2|x^1|}|y^1|^{\frac{\alpha(\gamma_0-\gamma_1-2)}{2}-1}dy^1 \nonumber
\\
&= C|x^1|^{\frac{\alpha(\gamma_0-\gamma_1-2)}{2}} \leq C|x^1|^{\frac{\alpha(\gamma_0-\gamma_1)}{2}} \leq C(1+|x^1|^{\alpha/2})^{\gamma_0-\gamma_1}.
\end{align}
Secondly, we estimate $J_2$. If $|x^1|/2<|y^1|<2|x^1|$, then
$$
\frac{1}{2} \leq \frac{1+|y^1|^{\alpha/2}}{1+|x^1|^{\alpha/2}} \leq 2 ,\quad \frac{1}{3}\leq\frac{|y^1|^{\alpha/2}}{1+|y^1|^{\alpha/2}}\leq1.
$$
Therefore, we have
\begin{align} \label{21.06.16.1529}
J_2(x^1)&\leq C(1+|x^1|^{\alpha/2})^{\gamma_0-\gamma_1}\int_{\bR} p_1(1,x^1-y^1) dy^1 \nonumber
\\
&= C(1+|x^1|^{\alpha/2})^{\gamma_0-\gamma_1}.
\end{align}
Next, we estimate $J_3$. If $1/2 \leq |y^1| \leq |x^1|/2$, then
$$
\frac{1}{3}\leq\frac{|y^1|^{\alpha/2}}{1+|y^1|^{\alpha/2}}\leq1,\quad |x^1-y^1|\geq \frac{|x^1|}{2}.
$$
Hence, by \eqref{21.05.13.13.02} and \eqref{21.06.15.1347},
\begin{align} \label{21.06.16.1530}
    J_3(x^1) &\leq \int_{1/2 \leq |y^1|\leq|x^1|/2} \frac{1}{|x^1-y^1|^{1+\alpha}}\left(\frac{|y^1|^{\alpha/2}}{1+|y^1|^{\alpha/2}}\right)^{\gamma_1}|y^1|^{\frac{\alpha(\gamma_0-\gamma_1)}{2}} dy^1 \nonumber
    \\
    &\leq C|x^1|^{-1-\alpha}\int_{1/2 \leq |y^1| \leq |x^1|/2} |y^1|^{\frac{\alpha(\gamma_0-\gamma_1)}{2}} dy^1 \nonumber
    \\
    &\leq C|x^1|^{-1-\alpha}\int_{1/2 \leq |y^1| \leq |x^1|/2} |y^1|^{\frac{\alpha(\gamma_0-\gamma_1+2)}{2}} dy^1 \nonumber
    \\
    &\leq C|x^1|^{-1-\alpha}\int_{|y^1|\leq |x^1|/2}|y^1|^{\frac{\alpha(\gamma_0-\gamma_1+2)}{2}}dy^1 \nonumber
    \\
    &= C|x^1|^{\frac{\alpha(\gamma_0-\gamma_1)}{2}}\leq C (1+|x^1|^{\alpha/2})^{\gamma_0-\gamma_1}.
\end{align}
Lastly, we estimate $J_4$. If $|y^1|\leq 1/2$, then
$$
\frac{2}{3}\leq\frac{1}{1+|y^1|^{\alpha/2}}\leq 1,\quad |x^1-y^1|\geq \frac{|x^1|}{2}.
$$
Therefore, by \eqref{21.05.13.13.02} and \eqref{21.06.15.1347},
\begin{align*}
    J_4(x^1)&\leq \int_{|y^1|\leq 1/2} \frac{1}{|x^1-y^1|^{1+\alpha}}\left(\frac{1}{1+|y^1|^{\alpha/2}}\right)^{\gamma_1}|y^1|^{\alpha\gamma_0/2}dy^1
    \\
    &\leq C|x^1|^{-1-\alpha}\int_{|y^1|\leq 1}|y^1|^{\alpha\gamma_0/2}dy^1 
    \\
    &\leq C|x^1|^{-1-\alpha}\leq C(1+|x^1|^{\alpha/2})^{-2/\alpha-2} \leq C(1+|x^1|^{\alpha/2})^{\gamma_0-\gamma_1}.
\end{align*}
Combining this with \eqref{21.06.16.1528}, \eqref{21.06.16.1529} and \eqref{21.06.16.1530},  we prove \eqref{21.08.02.17.25}  for 
$|x^1| > 1$. The lemma is proved.
\end{proof}

\begin{lemma}
\label{re2}
 Let  \eqref{21.05.13.13.02} hold for $\gamma_0, \gamma_1\in \bR$. Then, for $(t,x)\in (0,\infty)\times\bR^d$,
    \begin{align*}
        \int_{D}p(t,x-y)\frac{d_y^{\gamma_0\alpha/2}}{(\sqrt{t}+d_y^{\alpha/2})^{\gamma_1}}dy\leq C (\sqrt{t}+d_x^{\alpha/2})^{\gamma_0-\gamma_1},
    \end{align*}
where $C$ depends only on $d,\alpha,\gamma_0,\gamma_1$ and $D$.
\end{lemma}

\begin{proof}
Note that it is enough to assume $D$ is bounded. This is because if $D$ is a half space, the result follows from Lemma \ref{re1}.

For $R>0$, denote  $D_R:=\{x\in D: d_x\geq R\}$. Since $D$ is bounded, one can find $x_1,\dots, x_n\in \partial D$ such that 
$$
D \subset \left(\bigcup_{i=1}^n (D\cap B_{R/3}(x_i))\right) \cup D_{R/6}.
$$
Therefore,
\begin{align*} 
&\int_{D} p(t,x-y) \frac{d_y^{\alpha\gamma_0/2}}{(\sqrt{t}+d_y^{\alpha/2})^{\gamma_1}} dy
\\
&\leq \sum_{i=1}^n \int_{D\cap B_{R/3}(x_i)} p(t,x-y) \frac{d_y^{\alpha\gamma_0/2}}{(\sqrt{t}+d_y^{\alpha/2})^{\gamma_1}} dy 
\\
&\quad + \int_{D_{R/6}} p(t,x-y) \frac{d_y^{\alpha\gamma_0/2}}{(\sqrt{t}+d_y^{\alpha/2})^{\gamma_1}} dy
\\
&=:\sum_{k=1}^n I_k(t,x)+II(t,x).
\end{align*}

\textbf{1}.  We estimate $I_k(t,x)$ for fixed  $k\in \{1,2,\cdots,n\}$.

 First, assume $x\in B_R(x_k)\cap D$. Then, (by reducing $R$ if necessary) we can consider a $C^{1,1}$-bijective (flattening boundary) map $\Phi=(\Phi^1,\cdots,\Phi^d)$ defined on $B_R(x_k)$ such that $\Phi(B_R(x_k)\cap D)\subset \bR^d_+$ and $d_z \approx \Phi^1(z)$ on $B_R(x_k)\cap D$.  Then, one can easily handle $I_k$ using Lemma \ref{re1}.

Second, assume $x\in D\setminus B_R(x_k)$.  Since $r\to p(t,r)$ is nonincreasing, for any $y,z \in B_{R/3}(x_k)$, we have $|z-y| \leq 2R/3 < |x-y|$, which implies
\begin{align*} 
p(t,x-y)\leq p(t,z-y).
\end{align*}
If $\gamma_1-\gamma_0\geq 0$, choosing $z \in B_{R/3}(x_k) \cap D$ such that $d_x \leq C(D,R) d_z$ and using the result for the first case, 
\begin{align} \label{21.05.13.17.01}
I_k(t,x) &\leq \int_{D\cap B_{R/3}(x_k)} p(t,z-y) \frac{d_y^{\alpha\gamma_0/2}}{(\sqrt{t}+d_y^{\alpha/2})^{\gamma_1}} dy \nonumber
\\
& \leq  C (\sqrt{t}+d_z^{\alpha/2})^{\gamma_0-\gamma_1} \leq C (\sqrt{t}+d_x^{\alpha/2})^{\gamma_0-\gamma_1}.
\end{align}    
If $\gamma_1-\gamma_0 <0$, by taking $z \in B_{R/3}(x_k)\cap D$ such that $d_z\leq d_x$, we also have \eqref{21.05.13.17.01}.

\vspace{2mm}

\textbf{2}.  We estimate $II(t,x)$.

We first consider the case  $x\in D_{R/12}$. For $y\in D_{R/6}$,   we have $d_x\approx d_y \approx 1$ and 
\begin{equation*} 
\left(\frac{\sqrt{t}+d_y^{\alpha/2}}{\sqrt{t}+d_x^{\alpha/2}}\right)^{\gamma_0-\gamma_1} \leq C(diam(D),\gamma_0,\gamma_1,R,\alpha),
\end{equation*}
Using this, we get
\begin{equation*}
    II \leq C (\sqrt{t}+d_x^{\alpha/2})^{\gamma_0-\gamma_1}\int_{D_{R/6}}p(t,x-y)\left(\frac{d_y^{\alpha/2}}{\sqrt{t}+d_y^{\alpha/2}}\right)^{\gamma_0} dy.
\end{equation*}
Also, since $d_y \approx 1$ on $y\in D_{R/6}$, it suffices to show that 
\begin{align} \label{21.06.16.1754}
    \int_{D_{R/6}} p(t,x-y) \left(\frac{1}{\sqrt{t}+1}\right)^{\gamma_0} dy \leq C.
\end{align}
Since \eqref{21.06.16.1754} is obvious if $t\leq 1$ or $\gamma_0\geq 0$.  If $t>1$ and $\gamma_0<0$ , then 
 by \eqref{21.06.22.15.41},
\begin{align*}
\int_{D_{R/6}} p(t,x-y)\left(\frac{1}{\sqrt{t}+1}\right)^{\gamma_0} dy &\leq C  \int_{D}  t^{-d/\alpha-\gamma_0/2} dy \leq C.
\end{align*}
Therefore, \eqref{21.06.16.1754} is proved.

Next, we consider the case $x\in D\setminus D_{R/12}$. Since $d_y\approx 1$, we have
$$
\frac{d_y^{\alpha\gamma_0/2}}{(\sqrt{t}+d_y^{\alpha/2})^{\gamma_1}} \approx  \frac{1}{(\sqrt{t}+1)^{\gamma_1}}.
$$
Also note that $|x-y|>R/12$ for $y\in D_{R/6}$. 
Thus, by \eqref{21.06.22.15.41},
 \begin{eqnarray*} 
    II &\leq& C 1_{t<1}\int_{|x-y|\geq R/12} \frac{t}{|x-y|^{d+\alpha}} dy + C1_{t\geq 1}  t^{-d/\alpha-\gamma_1/2} 
    \\
    &\leq& C  1_{t<1} + C 1_{t\geq 1}t^{-\gamma_0/2-d/\alpha} t^{(\gamma_0-\gamma_1)/2}  \\
    &\leq& C  1_{t<1} + C 1_{t\geq 1} t^{(\gamma_0-\gamma_1)/2}. 
    \end{eqnarray*}
Thus if  $\gamma_0\geq \gamma_1$, then by \eqref{21.05.13.13.02},
$$
    II\leq C t^{(\gamma_0-\gamma_1)/2} \leq C (\sqrt{t}+d_x^{\alpha/2})^{\gamma_0-\gamma_1}.
$$
Now let $\gamma_0<\gamma_1$. Then, $1_{t<1} (\sqrt{t}+d^{\alpha/2}_x)$ is bounded above and $t \approx (t+d^{\alpha/2}_x)$ if $t>1$, we get
$$
II \leq C 1_{t<1} + C 1_{t\geq 1} t^{(\gamma_0-\gamma_1)/2} \leq C (\sqrt{t}+d_x^{\alpha/2})^{\gamma_0-\gamma_1}
$$
provided that $\gamma_0<\gamma_1$. The lemma is proved.
\end{proof}

Next, we provide some results for the distance function $d_x$.

\begin{lemma}
\label{21.05.13.11.18}
Let $D$ be a half space or a bounded $C^{1,1}$ open set.
\begin{enumerate}[(i)]
    \item Let $x_0\in\partial D$ and $r>0$. Then, for any $\lambda>-1$, 
\begin{align} \label{21.09.20.1850}
\aint_{B_r(x_0)}d_x^{\lambda}dx\leq C(d,\lambda,D) r^{\lambda}.
\end{align}

    \item Let $y\in D$, $r, \rho, \kappa_1>0$ and $-1<\kappa_0\leq0$. Suppose that $r\leq c\rho$ for some $c>0$. Then, there exists a constant $C=C(d,\kappa_1,\kappa_0,c,D)$ such that 
$$
\int_{ D_{\rho}(y)\cap D^r}\frac{d_x^{\kappa_0}}{|x-y|^{d+\kappa_1}}dx \leq C \rho^{-\kappa_1}r^{\kappa_0},
$$
where $D_\rho(y):=\{x\in D:|x-y|>\rho\}$ and $D^r:=\{x\in D:d_x \leq r\}$.

\end{enumerate}
\end{lemma}

\begin{proof}
($i$)  The result is trivial if $D$ is a half space. If $D$ is a bounded $C^{1,1}$ open set, then $\partial D$ is a $(d-1)$-dimensional compact Lipschitz manifold.  Thus, we have \eqref{21.09.20.1850}  due to  e.g. page 16 of \cite{aikawa1991quasiadditivity}. 

$(ii)$ \textbf{1}. Let $D$ be a half space.

Assume first $d\geq2$.
By  the change of variables  and Fubini's theorem,
\begin{align} \label{21.06.07.1538}
&\int_{|x-y|>\rho,|x^1|\leq r}\frac{|x^1|^{\kappa_0}}{|x-y|^{d+\kappa_1}}dx \nonumber\\
&= \int_{|x^1+y^1|\leq r} |x^1+y^1|^{\kappa_0} \int_{\bR^{d-1}} |x|^{-d-\kappa_1}1_{|x|>\rho} dx' dx^1 \nonumber
\\
&=C\int_{|x^1+y^1|\leq r} \frac{|x^1+y^1|^{\kappa_0}}{|x^1|^{1+\kappa_1}}\int_0^{\infty}\frac{s^{d-2}}{(1+s^2)^{(d+\kappa_1)/2}}1_{|x^1|(1+s^2)^{1/2}>\rho}dsdx^1 \nonumber
\\
&=C\int_0^{\infty}\frac{s^{d-2}}{(1+s^2)^{(d+\kappa_1)/2}}I(\rho,s,y^1,r)ds,
\end{align}
where
$$
I(\rho,s,y^1,r):=\int_{\bR}\frac{|x^1|^{\kappa_0}}{|x^1-y^1|^{1+\kappa_1}}1_{|x^1-y^1|>(1+s^2)^{-1/2}\rho}1_{|x^1|\leq r}dx^1.
$$
Take $p_0=p_0(\kappa_0)>1$ satisfying $-1<p_0\kappa_0$. Since $-1<\kappa_0\leq0<\kappa_1$, by H\"older's inequality,
\begin{align}
\label{21.06.29.13.55}
    &I(\rho,s,y^1,r)\nonumber\\
    &\leq\left(\int_{\bR}|x^1|^{p_0\kappa_0}1_{|x^1|\leq r}dx^1\right)^{1/p_0}\left(\int_{\bR}|x^1|^{-p_0'-p_0'\kappa_1}1_{|x^1|>(1+s^2)^{-1/2}\rho}dx^1\right)^{1/p_0'}\nonumber
    \\ 
    &\leq Cr^{\kappa_0+\frac{1}{p_0}}\rho^{-1-\kappa_1+\frac{1}{p_0'}}(1+s^2)^{\frac{(1+\kappa_1-1/p_0')}{2}},
\end{align}
where $p_0'=p_0/(p_0-1)$.
Combining \eqref{21.06.07.1538} and \eqref{21.06.29.13.55}, we have
\begin{align*}
    &\int_{|x-y|>\rho,|x^1|\leq r}\frac{|x^1|^{\kappa_0}}{|x-y|^{d+\kappa_1}}dx
    \\
    &\leq C\rho^{-1-\kappa_1+\frac{1}{p_0'}}r^{\kappa_0+\frac{1}{p_0}}\int_0^{\infty}\frac{s^{d-2}}{(1+s^2)^{(d-1+1/p_0')/2}}ds
    \\
    &=C\rho^{-1-\kappa_1+\frac{1}{p_0'}}r^{\kappa_0+\frac{1}{p_0}}\leq C \rho^{-\kappa_1}r^{\kappa_0}.
\end{align*}

 For $d=1$, using \eqref{21.06.29.13.55}, we get
 \begin{align*}
 \int_{|x-y|>\rho,|x|\leq r}\frac{|x|^{\kappa_0}}{|x-y|^{1+\kappa_1}}dx &= I(\rho,0,y,r) 
 \\
 &\leq C r^{\kappa_0+\frac{1}{p_0}}\rho^{-1-\kappa_1+\frac{1}{p_0'}} \leq C \rho^{-\kappa_1}r^{\kappa_0}.
 \end{align*}

\textbf{2}. Let $D$ be a bounded open set.
We take $x_1,\dots, x_n\in \partial D$ such that
$$
D^r \subset \bigcup_{i=1}^{n}B_{2r}(x_i).
$$
Therefore, by ($i$),
\begin{align*}
\int_{D_{\rho}(y)\cap D^r} \frac{d_x^{\kappa_0}}{|x-y|^{d+\kappa_1}} dx &\leq \sum_{i=1}^n\int_{D_{\rho}(y)\cap B_{2r}(x_i)} \frac{d_x^{\kappa_0}}{|x-y|^{d+\kappa_1}} dx \nonumber
\\
    &\leq C\rho^{-d-\kappa_1}r^{d+\kappa_0}\leq C\rho^{-\kappa_1}r^{\kappa_0}.
\end{align*}
The lemma is proved.
\end{proof}

  We write $u\in \mathcal{H}^{\gamma+\alpha}_{p}(T)$ if $u\in \bH^{\gamma+\alpha}_p(T)$, $u(0,\cdot)\in B_{p}^{\gamma+\alpha-\alpha/p}$ and there exists $f\in \bH^{\gamma}_p(T)$ such that for any 
  $\phi\in C^{\infty}_c(\bR^d)$,
$$
(u(t,\cdot), \phi)_{\bR^d}=(u(0,\cdot),\phi)_{\bR^d} +\int^t_0 (f(s,\cdot), \phi)_{\bR^d}ds, \quad \forall \, t\leq T.
$$
In this case, we write $f=u_t$. The norm in  $ \cH_{p}^{\gamma+\alpha}(T)$ is defined as 
\begin{align*}
\|u\|_{\cH_{p}^{\gamma+\alpha}(T)} := \| u\|_{\bH_{p}^{\gamma+\alpha}(T)} + \|u_t\|_{\bH_{p}^{\gamma}(T)}+ \| u(0,\cdot) \|_{B_{p}^{\gamma+\alpha-\alpha /p}}. \nonumber
\end{align*}

\begin{lemma}\label{21.10.06.15.11}
Let $p\in(1,\infty)$, $\alpha\in(0,2)$, $\gamma\in \bR$ and $1/p<\nu\leq1$. For $a>0$, $0\leq s\leq t\leq T$ and $u\in \cH^{\gamma+\alpha}_{p}(T)$, 
\begin{align}
\label{21.07.08.11.00}
    &\|u(t)-u(s)\|_{H_p^{\gamma+\alpha-\nu\alpha}}\nonumber
    \\
    &\leq C|t-s|^{\nu-1/p}a^{2\nu-1}\left(a\|u\|_{\bH_p^{\gamma+\alpha}(T)}+a^{-1}\| u_t\|_{\bH_p^{\gamma}(T)}\right),
\end{align}
where $C=C(\alpha,p,\nu)$. In particular, $C$ is independent of $T$ and $a$.
\end{lemma}
\begin{proof}
One can prove the lemma by following the proof of \cite[Theorem 7.3]{krylov2001some}, which treats the case $\alpha=2$.  
First, we note that 
due to the isometry  $(1-\Delta)^{\sigma/2}: H_p^{\gamma} \to H^{\gamma-\sigma}_p$, we only need to prove for any particular $\gamma\in \bR$, and therefore we 
assume  $\gamma=\nu\alpha-\alpha$. Second, since $C^{\infty}_c([0,T]\times \bR^d)$ is dense in $\cH^{\gamma+\alpha}_{p}(T)$, we may further assume $u\in C^{\infty}_c([0,T]\times \bR^d)$.  Third, due to the scaling argument used at the beginning of the proof of  \cite[Theorem 7.3]{krylov2001some},  it is enough to consider the case $a=T=1$. 

Finally,  to prove \eqref{21.07.08.11.00} for the case $a=T=1$,  we just need to repeat the proof  of \cite[Theorem 7.2]{kry99analytic} word for word.  Although \cite[Theorem 7.2]{kry99analytic} handles the case $\alpha=2$,  its proof works also for $\alpha\in (0,2)$ thanks to  \cite[Lemma A.2]{han2021regularity}. The lemma is proved.
\end{proof}

\end{document}